\documentclass[12pt]{amsart}

\textwidth=16.5cm \textheight=22.5cm \hoffset=-17mm \voffset=-16mm
\headheight=14pt \headsep=22pt
\usepackage{amsmath}
\usepackage{amsfonts}
\usepackage{amssymb}
\usepackage{latexsym}
\usepackage[all]{xy}
\newtheorem{theorem}{Theorem}[section]
\newtheorem{proposition}[theorem]{Proposition}

\newtheorem{definition}[theorem]{Definition}
\theoremstyle{remark}
\newtheorem{remark}[theorem]{Remark}
\theoremstyle{definition}
\newtheorem{example}[theorem]{Example}
\numberwithin{equation}{section}

   \def\sB{{\mathfrak B}}   
\def\sD{{\mathfrak D}}      
   \def\sH{{\mathfrak H}}   
   \def\sK{{\mathfrak K}}   \def\sL{{\mathfrak L}}
\def\sM{{\mathfrak M}}   \def\sN{{\mathfrak N}}

      \def\dC{{\mathbb C}}
\def\dD{{\mathbb D}}

   \def\dN{{\mathbb N}}   
      
   \def\dT{{\mathbb T}}   
      
   \def\dZ{{\mathbb Z}}

\def\cA{{\mathcal A}}   \def\cB{{\mathcal B}}   \def\cC{{\mathcal C}}
\def\cD{{\mathcal D}}   \def\cE{{\mathcal E}}   \def\cF{{\mathcal F}}
\def\cG{{\mathcal G}}   \def\cH{{\mathcal H}}   
   \def\cK{{\mathcal K}}   \def\cL{{\mathcal L}}
\def\cM{{\mathcal M}}      
      
\def\cS{{\mathcal S}}   \def\cT{{\mathcal T}}   \def\cU{{\mathcal U}}
\def\cV{{\mathcal V}}   \def\cW{{\mathcal W}}   \def\cX{{\mathcal X}}

\def\bL{{\mathbf L}}

\def\clos{{\rm clos\,}}
\def\RE{{\rm Re\,}}

\def\wt{\widetilde}
\def\wh{\widehat}

\def\f{\varphi}

\def\uphar{{\upharpoonright\,}}
\def\ovl{\overline}

\def\ran{{\rm ran\,}}

\def\cran{{\rm \overline{ran}\,}}
\def\cspan{{\rm \overline{span}\, }}
\begin{document}
\title[Block operator CMV
matrices] {Conservative discrete time-invariant systems and block
operator CMV matrices}
\author{
Yury Arlinski\u{i}}
\address{Department of Mathematical Analysis \\
East Ukrainian National University \\
Kvartal Molodyozhny 20-A \\
Lugansk 91034 \\
Ukraine} \email{yma@snu.edu.ua}

\subjclass {47A48, 47A56, 47B36, 93B28 }

\keywords{Contraction, conservative system, transfer function,
realization, Schur class function, Schur parameters, block Operator
CMV matrices, unitary dilation}
\thispagestyle{empty}
\begin{abstract}
It is well known that an operator-valued function $\Theta$ from the
Schur class ${\bf S}(\mathfrak M,\mathfrak N)$, where $\mathfrak M$
and $\mathfrak N$ are separable Hilbert spaces, can be realized as
the transfer function of a simple conservative discrete
time-invariant linear system. The known realizations involve the
function $\Theta$ itself, the Hardy spaces or the reproducing kernel
Hilbert spaces. On the other hand, as in the classical scalar case,
the Schur class operator-valued function is uniquely determined by
its so called "Schur parameters". In this paper we construct simple
conservative realizations using the Schur parameters only. It turns
out that the unitary operators corresponding to the systems take the
form of five diagonal block operator matrices, which are the analogs
of Cantero--Moral--Vel\'azquez (CMV) matrices appeared recently in
the theory of scalar orthogonal polynomials on the unit circle. We
obtain new models given by truncated block operator CMV matrices for
an arbitrary completely non-unitary contraction. It is shown that
the minimal unitary dilations of a contraction in a Hilbert space
and the minimal Naimark dilations of a semi-spectral operator
measure on the unit circle can also be expressed by means of block
operator CMV matrices.
\end{abstract}
\maketitle
 \tableofcontents
\section{Introduction}

In what follows the class of all continuous linear operators defined
on a complex Hilbert space $\sH_1$ and taking values in a complex
Hilbert space $\sH_2$ is denoted by $\bL(\sH_1,\sH_2)$ and
${\bL}(\sH):= {\bL}(\sH,\sH)$. We denote by $I_\cH$ the identity
operator in a Hilbert space $\cH$ and by $P_\cL$ the orthogonal
projection onto the subspace (the closed linear manifold) $\cL$. The
notation $T\uphar \cL$ means the restriction of a linear operator
$T$ on the set $\cL$. The range and the null-space of a linear
operator $T$ are denoted by $\ran T$ and $\ker T$, respectively.

 Recall that an operator
$T\in\bL(\sH_1,\sH_2)$ is said to be
\begin{itemize}
\item \textit{contractive} if $\|T\|\le 1$;

\item \textit{isometric} if $\|Tf\|=\|f\|$ for all $f\in \sH_1$
$\iff T^*T=I_{\sH_1}$;

\item \textit{co-isometric} if $T^*$ is isometric $\iff
TT^*=I_{\sH_2}$;
\item \textit{unitary} if it is both isometric and co-isometric.
\end{itemize}
Given a contraction $T\in \bL(\sH_1,\sH_2)$. The operators
\[
D_T:=(I-T^*T)^{1/2},\qquad D_{T^*}:=(I-TT^*)^{1/2}
\]
are called the \textit{defect operators} of $T$, and the subspaces
$\sD_T=\cran D_T,$ $\sD_{T^*}=\cran D_{T^*}$ the \textit{defect
subspaces} of $T$. The dimensions $\dim\sD_T,$ $\dim\sD_{T^*}$ are
known as the \textit{defect numbers} of $T$. The defect operators
satisfy the following intertwining relations
\begin{equation}
\label{defect} TD_{T}=D_{T^*}T,\qquad T^*D_{T^*}=D_{T}T^*.
\end{equation}
It follows from \eqref{defect} that $T\sD_T\subset\sD_{T^*}$,
$T^*\sD_{T^*}\subset\sD_T$, and $T(\ker D_T)=\ker D_{T^*},$
$T^*(\ker D_{T^*})=\ker D_{T}$. Moreover, the operators $T\uphar\ker
D_{T}$ and $T^*\uphar\ker D_{T^*}$ are isometries and $T\uphar\sD_T$
and $T^*\uphar\sD_{T^*}$ are \textit{pure} contractions, i.e.,
$||Tf||<||f||$ for $f\in\sH\setminus\{0\}$.

 The \textit{Schur class}
${\bf S}(\sH_1,\sH_2)$ is the set of all holomorphic and contractive
$\bL(\sH_1,\sH_2)$-valued functions on the unit disk
$\dD=\{\lambda\in\dC:|\lambda|<1\}$.  This class is a natural
generalization of the Schur class ${\bf S}$ of scalar analytic
functions mapping the unit disk $\dD$ into the closed unit disk
$\overline{\dD}$ \cite{Schur} and is intimately connected %(see \cite
%{Helton1}, \cite{Helton2}, \cite{Ball-Coehn},  \cite{Ball})
with spectral theory and models for Hilbert space contraction
operators \cite{SF}, \cite{BrR1}, \cite{BrR2}, \cite{VilBr},
\cite{Br}, \cite{Br1}, the Lax-Phillips scattering theory
\cite{LPh}, \cite{AA}, \cite{Ball-C-Ue}, the theory of scalar and
matrix orthogonal polynomials on the unit circle
$\dT=\{\xi\in\dC:|\xi|=1\}$ \cite{DPS}, \cite{Si1}, \cite{DGK1},
\cite{DGK}, the theory of passive (contractive) discrete
time-invariant linear systems \cite{Helton1}, \cite{Helton2},
\cite{A}, \cite{Arov}, \cite{ArKaaP}, \cite{Ball-Coehn},
\cite{Ball}. One of the characterization of the operator-valued
Schur class is that any $\Theta\in{\bf S}(\sM,\sN)$ can be realized
as the transfer (characteristic) function of the form
\[
\Theta(\lambda)=D+\lambda C(I_\sH-\lambda A)^{-1}B,\;\lambda\in\dD
\]
of a discrete time-invariant system (colligation)
\[
\tau=\left\{\begin{bmatrix}D&C\cr B&A
\end{bmatrix};\sM,\sN,\sH\right\}
\]
with the input space $\sM$, the output space $\sN$, and some state
space $\sH$. Moreover, if the operator $U_\tau$ is given by the
block operator matrix
\[
U_\tau=\begin{bmatrix}D&C\cr B&A
\end{bmatrix}:\begin{array}{l}\sM\\\oplus\\\sH\end{array}\to
\begin{array}{l}\sN\\\oplus\\\sH\end{array},
\]
then the system $\tau$ can be chosen {\bf(a)} passive ($U_\tau$ is
contractive) and minimal,  {\bf(b)} co-isometric ($U_\tau$ is
co-isometry) and observable, {\bf(c)} isometric ($U_\tau$ is
isometry) and controllable, {\bf(d)} conservative ($U_\tau$ is
unitary) and simple (see Section \ref{secS}). The corresponding
models of the systems $\tau$ and the state space operators $A$ are
well-known. We mention the de Branges--Rovnyak functional model of a
co-isometric system \cite{BrR2}, \cite{Ando}, \cite{NV3}, the
Sz.-Nagy--Foias \cite{SF}, the Pavlov \cite{Pavlov1},
\cite{Pavlov2}, \cite{Pavlov3}, and the
 Nikol'ski\u{i}--Vasyunin \cite{NV1}, \cite{NV2} functional models
 of completely non-unitary contractions, the Brodski\u{i} \cite{Br1} functional model of a simple unitary
colligation, the Arov--Kaashoek--Pik \cite{ArKaaP} functional model
of a passive minimal and optimal system. All these models involve
the Schur class function and/or the Hardy spaces, the de
Branges--Rovnyak reproducing kernel Hilbert space.

The main goal of the present paper is constructions of models for
simple conservative systems and completely non-unitary contractions
by means of the operator analogs of the scalar CMV matrices, which
recently appeared in the theory of orthogonal polynomials on the
unit circle \cite{CMV1}, \cite{Si1}, \cite{S3}, \cite{DPS}.

%In the present paper we give the block operator matrix model for a
%simple conservative system and for a completely non-unitary
%contraction. The operator matrices involved are the operator analogs
%of the scalar CMV matrices which recently appeared in the theory of
%the orthogonal polynomials on the unit circle \cite{CMV1},
%\cite{Si1}, \cite{S3}, \cite{DPS}.
In the paper of M.J.~Cantero,
L.~Moral, and L.~Vel\'azquez \cite{CMV1} it is established that the
semi-infinite matrices of the form
\begin{equation}
\label{CMV11} \cC=\cC(\{\alpha_n\})=\begin{pmatrix}
\bar{\alpha}_0&\bar{\alpha}_1\rho_0&\rho_1\rho_0&0&0&\ldots\cr
\rho_0&-\bar{\alpha}_1\alpha_0&-\rho_1\alpha_0&0&0&\ldots\cr
0&\bar{\alpha}_2\rho_1&-\bar{\alpha}_2\alpha_1&\bar{\alpha}_3\rho_2&\rho_3
\rho_2& \ldots\cr
0&\rho_2\rho_1&-\rho_2\alpha_1&-\bar{\alpha}_3\alpha_2&-\rho_3\alpha_2&\ldots
\cr 0&0&0&\bar{\alpha}_4\rho_3&-\bar{\alpha}_4\alpha_3&\ldots\cr
\vdots&\vdots&\vdots&\vdots&\vdots&\vdots
\end{pmatrix}
\end{equation} and
\begin{equation}
\label{CMV12} \wt\cC=\wt\cC(\{\alpha_n\})=\begin{pmatrix}
\bar{\alpha}_0&\rho_0&0&0&0&\ldots\cr
\bar{\alpha}_1\rho_0&-\bar{\alpha}_1\alpha_0&\bar{\alpha}_2\rho_1&\rho_2\rho_1&0&\ldots\cr
\rho_1\rho_0&-\rho_1\alpha_0&-\bar{\alpha}_2\alpha_1&-\rho_2\alpha_1&0&\ldots\cr
0&0&\bar{\alpha}_3\rho_2&-\bar{\alpha}_3\alpha_2&\bar{\alpha}_4\rho_3&\ldots
\cr
0&0&\rho_3\rho_2&-\rho_3\alpha_2&-\bar{\alpha}_4\alpha_3&\ldots\cr
\vdots&\vdots&\vdots&\vdots&\vdots&\vdots
\end{pmatrix}
\end{equation}
give  representations of the unitary operator $(Uf)(\zeta)=\zeta
f(\zeta)$ in $L_2(\dT,d\mu)$, where the $d\mu$ is a nontrivial
probability measure on the unite circle, with respect to the
orthonormal systems  obtained by orthonormalization of the sequences
$\{1,\zeta,\zeta^{-1},\zeta^2,\zeta^{-2},\ldots\}$ and
$\{1,\zeta^{-1},\zeta,\zeta^{-2},\zeta^2,\ldots\}$, respectively.
The Verblunsky coefficients $\{\alpha_n\}$, $|\alpha_n|< 1$, arise
in the Szeg\H{o} recurrence formula
\[%begin{equation}
%\label{Ortho}
\zeta \Phi_n(\zeta)=\Phi_{n+1}(\zeta)+
\bar\alpha_n\zeta^n\ovl{\Phi_n(1/\bar\zeta)},\qquad n=0,1, \ldots
\]%end{equation}
for monic orthogonal with respect to $d\mu$ polynomials
$\{\Phi_n\}$, and $\rho_n:=\sqrt{1-|\alpha_n|^2}$. The matrices
$\cC(\{\alpha_n\})$ $\wt\cC(\{\alpha_n\})$ and are called the {\it
CMV matrices}. Note that the matrix $\wt C$ is transpose to $\cC$.

 Given a probability measure
$\mu$ on $\dT$, define the {\it Carath\'eodory function} by
\[
 F(\lambda)=
F(\lambda,\mu):=\int_{\dT}\frac{\zeta+\lambda}{\zeta-\lambda}\,d\mu(\zeta)=
1+2\sum_{n=1}^\infty \beta_n\lambda^n, \quad \beta_n=\int_{\dT}
\zeta^{-n}d\mu \]
 the moments of $\mu$. $F$ is an analytic
function in $\dD$ which obeys $\RE F>0$, $F(0)=1$. The  Schur class
function $f(\lambda)$ is then defined by
\[
 f(\lambda)= f(\lambda,\mu):=\frac{1}{\lambda}\,\frac{F(\lambda)-1}{F(\lambda)+1},
\]
Given a Schur function $f(\lambda)$, which is not a finite Blaschke
product, define inductively
\[
f_0(\lambda)=f(\lambda),\;
f_{n+1}(\lambda)=\frac{f_n(\lambda)-f_n(0)}{\lambda(1-\overline{f_n(0)}f_n(\lambda))},\;
n\ge 0.
\]
It is clear that  $\{f_n\}$ is an {\it infinite} sequence of Schur
functions called the \textit{$n-th$ Schur iterates} and neither of
its terms is a finite Blaschke product. The numbers
$\gamma_n:=f_n(0)$ are called the {\it Schur parameters:}
\[
\cS f=\{\gamma_0,\gamma_1,\ldots\}.
\]
Note that
\[
f_n(\lambda)=\frac{\gamma_n+\lambda
f_{n+1}(\lambda)}{1+\bar\gamma_n\lambda
f_{n+1}}=\gamma_n+(1-|\gamma_n|^2)\frac{\lambda
f_{n+1}(\lambda)}{1+\bar\gamma_n\lambda f_{n+1}(\lambda)},\; n\ge 0.
\]
The method of labeling $f\in{\bf S}$ by its Schur parameters is
known as the \textit{Schur algorithm} and is due to I.~Schur
\cite{Schur}. In the case when
\[
f(\lambda)=e^{i\f}\prod_{k=1}^N \frac{\lambda-\lambda_k}{1-\bar
\lambda_k \lambda}
\]
is a finite Blaschke product of order $N$, the Schur algorithm
terminates at the $N$-th step. The sequence of Schur parameters
$\{\gamma_n\}_{n=0}^N$ is finite, $|\gamma_n|<1$ for
$n=0,1,\ldots,N-1$, and $|\gamma_N|=1$.

Due to Geronimus theorem  for the function $f(\lambda,\mu)$ the
relations $\gamma_n=\alpha_n$ hold true for all $n=0,1,\ldots$.

There is a nice multiplicative structure of the CMV matrices. In the
semi-infinite case $\cC$ and $\wt \cC$ are the products of two
matrices: $\cC=\cL\cM$, $\wt\cC=\cM\cL$, where
\[%begin{equation}
%\label{lm}
\begin{split}
\cL
&=\Psi(\alpha_0)\oplus\Psi(\alpha_2)\oplus\ldots\Psi(\alpha_{2m})
\oplus\ldots,\\
\cM &={\bf 1}_{1\times 1}\oplus\Psi(\alpha_1)\oplus\Psi(\alpha_3)
\oplus\ldots\oplus\Psi(\alpha_{2m+1})\oplus\ldots,
\end{split}
\]%end{equation}
and
$\Psi(\alpha)=\begin{pmatrix}\bar\alpha&\rho\cr\rho&-\alpha\end{pmatrix}.$
The finite $(N+1)\times (N+1)$ CMV matrices $\cC$ and $\wt\cC$ obey
$\alpha_0,\alpha_1,\ldots,\alpha_{N-1}\in \dD$ and $
|\alpha_{N}|=1$, and also $\cC=\cL\cM$, $\wt\cC=\cM\cL$, where in
this case $\Psi(\alpha_{N})=\left(\bar\alpha_{N}\right)$.

In the paper \cite{AGT} it is established that the {\it truncated}
CMV matrices
\[
\cT_0=\cT_0(\{\alpha_n\})=\begin{pmatrix}
-\bar{\alpha}_1\alpha_0&-\rho_1\alpha_0&0&0&\ldots\cr
\bar{\alpha}_2\rho_1&-\bar{\alpha}_2\alpha_1&\bar{\alpha}_3\rho_2&\rho_3
\rho_2& \ldots\cr
\rho_2\rho_1&-\rho_2\alpha_1&-\bar{\alpha}_3\alpha_2&-\rho_3\alpha_2&\ldots
\cr 0&0&\bar{\alpha}_4\rho_3&-\bar{\alpha}_4\alpha_3&\ldots\cr
\vdots&\vdots&\vdots&\vdots&\vdots&\vdots
\end{pmatrix}
\]
and
\[
\wt\cT_0=\wt\cT_0(\{\alpha_n\})=\begin{pmatrix}
-\bar{\alpha}_1\alpha_0&\bar{\alpha}_2\rho_1&\rho_2\rho_1&0&\ldots\cr
-\rho_1\alpha_0&-\bar{\alpha}_2\alpha_1&-\rho_2\alpha_1&0&\ldots\cr
0&\bar{\alpha}_3\rho_2&-\bar{\alpha}_3\alpha_2&\bar{\alpha}_4\rho_3&\ldots
\cr 0&\rho_3\rho_2&-\rho_3\alpha_2&-\bar{\alpha}_4\alpha_3&\ldots\cr
\vdots&\vdots&\vdots&\vdots&\vdots
\end{pmatrix}
\]
obtained from the ``full'' CMV matrices $\cC=\cC(\{\alpha_n\})$ and
$\wt\cC=\wt\cC(\{\alpha_n\})$ by deleting the first row and the
first column, provide the models of completely non-unitary
contractions with rank one defect operators.

 As pointed out by Simon in \cite{S3}, the
history of CMV matrices is started with the papers of Bunse-Gerstner
and Elsner \cite{BGE91} (1991) and Watkins \cite{Wa93} (1993), where
unitary semi-infinite five-diagonal matrices were introduces and
studied. In \cite{CMV1} Cantero, Moral, and Velazquez (CMV)
re-discovered them. In a context different from orthogonal
polynomials on the unit circle, Bourget, Howland, and Joye
\cite{BHJ03} introduced a set of doubly infinite family of matrices
with three sets of parameters which for special choices of the
parameters reduces to two-sided CMV matrices on $\ell^2(\dZ)$.

The Schur algorithm for matrix valued Schur class functions and its
connection with the matrix orthogonal polynomials on the unit circle
have been considered in the paper of Delsarte, Genin, and Kamp
\cite{DGK} and in the book of Dubovoj, Fritzsche, and Kirstein
\cite{DFK}. The CMV matrices, connected with matrix orthogonal
polynomials on the unit circle with respect to nontrivial
matrix-valued measures are considered in \cite{S3}, \cite{DPS}. If
the $k\times k$ matrix-valued non-trivial measure $\mu$ on $\dT$,
$\mu(\dT)=I_{k\times k}$ is given, then there are the left and the
right orthonormal matrix polynomials. The Szeg\H{o} recursions take
slightly different form than in the scalar case and the Verblunsky
$k\times k$ matrix coefficients (the Schur parameters of the
corresponding matrix-valued Schur function) $\{\alpha_n\}$ satisfy
the inequality $||\alpha_n||<1$ for all $n$. The latter condition is
in fact equivalent to the non-triviality of the measure. The entries
of the corresponding CMV matrix have the size $k\times k$ and the
numbers $\rho_n$ are replaced by the $k\times k$ defect matrices
$\rho^{L}_n=D_{\alpha_n}=(I-\alpha^*_n\alpha_n)^{1/2}$ and
$\rho^{R}_n=D_{\alpha^*_n}=(I-\alpha_n\alpha^*_n)^{1/2}$, where
$\alpha^*$ is the adjoint matrix. In these notations the CMV matrix
is of the form \cite{DPS}
\begin{equation}
\label{CCMV} \cC=\cC(\{\alpha_n\})=\begin{pmatrix}
\alpha^*_0&\rho^{L}_0\alpha^*_1&\rho^{L}_0\rho^{L}_1&0&0&\ldots\cr
\rho^{R}_0&-\alpha_0\alpha^*_1&-\alpha_0\rho^{L}_1&0&0&\ldots\cr
0&\alpha^*_2\rho^{R}_1&-\alpha^*_2\alpha_1&\rho^{L}_2\alpha^*_3&\rho^{L}_2
\rho^{L}_3& \ldots\cr
0&\rho^{R}_2\rho^{R}_1&-\rho^{R}_2\alpha_1&-\alpha_2\alpha^*_3&-\alpha_2\rho^{L}_3&\ldots
\cr 0&0&0&\alpha^*_4\rho^{R}_3&-\alpha^*_4\alpha_3&\ldots\cr
\vdots&\vdots&\vdots&\vdots&\vdots&\vdots
\end{pmatrix}.
\end{equation}
The operator extension of the Schur algorithm was developed by
T.~Constantinescu in \cite{Const} and with numerous applications is
presented in the monographs \cite{BC}, \cite{Const2}. The next
theorem goes back to Shmul'yan \cite{Shmul1}, \cite{Shmul2} and
T.~Constantinescu \cite{Const} (see also \cite{BC}, \cite{ARL1},
\cite{Arlarxiv}) and plays a key role in the operator Schur
algorithm.
\begin{theorem}
\label{MO} Let $\sM$ and $\sN$ be separable Hilbert spaces and let
the function $\Theta(\lambda)$ be from the Schur class ${\bf
S}(\sM,\sN).$ Then there exists a function $Z(\lambda)$ from the
Schur class ${\bf S}(\sD_{\Theta(0)},\sD_{\Theta^*(0)})$ such that
\begin{equation}
\label{MREP}
\Theta(\lambda)=\Theta(0)+D_{\Theta^*(0)}Z(\lambda)(I+\Theta^*(0)Z(\lambda))^{-1}D_{\Theta(0)},\;\lambda\in\dD.
\end{equation}
\end{theorem}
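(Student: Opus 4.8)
The plan is to construct the function $Z(\lambda)$ directly from $\Theta(\lambda)$ by solving the functional equation \eqref{MREP} for $Z$, and then to verify that the resulting $Z$ is holomorphic on $\dD$ and contractive with the correct domain and range spaces. Writing $\Theta_0:=\Theta(0)$, equation \eqref{MREP} reads $\Theta(\lambda)-\Theta_0 = D_{\Theta_0^*}Z(\lambda)(I+\Theta_0^*Z(\lambda))^{-1}D_{\Theta_0}$. The first step is to solve this algebraically. Since $\Theta_0$ is a contraction, the operators $D_{\Theta_0}:\sM\to\sM$ and $D_{\Theta_0^*}:\sN\to\sN$ have ranges dense in $\sD_{\Theta_0}$ and $\sD_{\Theta_0^*}$ respectively. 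The key formal manipulation is to set $W(\lambda):=Z(\lambda)(I+\Theta_0^*Z(\lambda))^{-1}$, so that $\Theta-\Theta_0=D_{\Theta_0^*}W D_{\Theta_0}$, and then to recover $Z$ from $W$ via $Z=(I-W\Theta_0^*)^{-1}W=W(I-\Theta_0^* W)^{-1}$, provided these inverses exist.

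The \textbf{core of the argument} is to show that the natural candidate $Z$ is well-defined and contractive. I expect this to follow from a pointwise computation at each $\lambda\in\dD$: the problem reduces to the purely operator-theoretic statement that any contraction $\Theta\in\bL(\sM,\sN)$ whose "constant term" is $\Theta_0$ can be written in the form \eqref{MREP} with a \emph{contraction} $Z\in\bL(\sD_{\Theta_0},\sD_{\Theta_0^*})$. This is precisely the Shmul'yan--Constantinescu description of the set of all contractive extensions/completions associated with a fixed corner, and the identity
\begin{equation*}
I-\Theta(\lambda)^*\Theta(\lambda)=D_{\Theta_0}(I+Z(\lambda)^*\Theta_0)^{-1}(I-Z(\lambda)^*Z(\lambda))(I+\Theta_0^*Z(\lambda))^{-1}D_{\Theta_0}
\end{equation*}
is the computation that ties $\|Z(\lambda)\|\le 1$ to $\|\Theta(\lambda)\|\le 1$. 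Establishing such a factorization of the defect operator $I-\Theta^*\Theta$ in terms of $I-Z^*Z$ is what forces $Z(\lambda)$ to land in the correct defect subspaces and to be contractive; conversely it shows the construction is reversible.

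For \textbf{holomorphy}, once $Z(\lambda)$ is defined pointwise by the inversion formula above, I would argue that $\lambda\mapsto Z(\lambda)$ is holomorphic because it is obtained from the holomorphic function $\Theta(\lambda)$ by composition with the fixed bounded operators $\Theta_0,D_{\Theta_0},D_{\Theta_0^*}$ and by operator inversion, and the map $X\mapsto X(I+\Theta_0^* X)^{-1}$ (a linear-fractional transformation of operators) is holomorphic on the set where the inverse exists. The invertibility of $I+\Theta_0^* Z(\lambda)$ throughout $\dD$ is guaranteed because $\|\Theta_0^* Z(\lambda)\|$ stays strictly controlled on the defect subspaces; more precisely the operator $I+\Theta_0^* Z(\lambda)$ is boundedly invertible since $\Theta_0^*$ maps $\sD_{\Theta_0^*}$ into $\sD_{\Theta_0}$ and the relevant spectral radius is strictly less than $1$ on that subspace.

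The \textbf{main obstacle} I anticipate is handling the non-invertibility of the defect operators when $\Theta_0$ has nontrivial kernel for $D_{\Theta_0}$ or $D_{\Theta_0^*}$: the formula \eqref{MREP} only determines the action of $Z(\lambda)$ on $\cran D_{\Theta_0}=\sD_{\Theta_0}$ and its values are only pinned down in $\sD_{\Theta_0^*}$, so one must check that the difference $\Theta(\lambda)-\Theta_0$ genuinely factors through these defect subspaces — equivalently, that $\Theta(\lambda)-\Theta_0$ annihilates $\ker D_{\Theta_0}$ and has range in $\sD_{\Theta_0^*}$. This is exactly the content of the intertwining relations \eqref{defect} combined with the fact that $\Theta$ is contraction-valued, and verifying this compatibility carefully (rather than merely formally inverting operators that may be unbounded inverses) is where the real work lies. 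Once this factorization-through-the-defects is secured, the contractivity of $Z(\lambda)$ and its holomorphy are routine, and the theorem follows.
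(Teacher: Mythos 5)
The paper gives no proof of Theorem \ref{MO}: it is quoted from \cite{Shmul1}, \cite{Shmul2}, \cite{Const} (see also \cite{BC}), so your argument has to stand on its own, and it does not. The central gap is the existence step. Your whole plan reduces to the claim that $\Theta(\lambda)-\Gamma_0$ (with $\Gamma_0=\Theta(0)$) factors as $D_{\Gamma_0^*}W(\lambda)D_{\Gamma_0}$ with a bounded, holomorphic $W(\lambda)\in\bL(\sD_{\Gamma_0},\sD_{\Gamma_0^*})$, after which you invert the M\"obius transform. You offer two justifications, and neither works. Citing the ``Shmul'yan--Constantinescu description of contractive completions'' is circular: that description \emph{is} the statement being proved. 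And the claim that the factorization is ``exactly the content of the intertwining relations \eqref{defect} combined with the fact that $\Theta$ is contraction-valued'' is false: the relations \eqref{defect} concern the single operator $\Gamma_0$ and say nothing about $\Theta(\lambda)$ for $\lambda\ne 0$, while pointwise contractivity alone is insufficient. Indeed, the theorem cannot be reduced to ``a pointwise computation at each $\lambda$'': take $\sM=\sN=\dC^2$, $\Gamma_0=\begin{pmatrix}1&0\cr 0&0\end{pmatrix}$ and $T=\begin{pmatrix}0&1\cr 1&0\end{pmatrix}$; both are contractions, yet $(T-\Gamma_0)$ does not annihilate $\ker D_{\Gamma_0}$, so $T$ is not of the form \eqref{MREP} for this $\Gamma_0$. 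What forces every value $\Theta(\lambda)$ into the M\"obius image of $\Gamma_0$ is holomorphy together with $\Theta(0)=\Gamma_0$ --- a Schwarz-lemma/kernel-positivity argument (already the fact that $\Theta(\lambda)$ agrees with $\Gamma_0$ on $\ker D_{\Gamma_0}$ is a maximum-principle statement). This genuinely function-theoretic input is precisely what is missing from your proposal.

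There are further breakdowns even granting the qualitative facts. Your ``equivalently'' is wrong: annihilating $\ker D_{\Gamma_0}$ and mapping into $\sD_{\Gamma_0^*}=\cran D_{\Gamma_0^*}$ is strictly weaker than factoring through the defect operators with a \emph{bounded} operator, since $\cran D_{\Gamma_0^*}$ may be much larger than $\ran D_{\Gamma_0^*}$; one needs a quantitative domination of the form $(\Theta(\lambda)-\Gamma_0)(\Theta(\lambda)-\Gamma_0)^*\le c(\lambda)^2 D_{\Gamma_0^*}^2$ (and its companion on the other side) so that Douglas' lemma applies, plus a locally uniform bound to get holomorphy of $W$. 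Your holomorphy argument also runs the linear-fractional map in the wrong direction: the map $X\mapsto X(I+\Gamma_0^*X)^{-1}$ takes $Z$ to $W$, whereas passing from $\Theta$ to $W$ involves the pseudo-inverses $D_{\Gamma_0^*}^{-1},D_{\Gamma_0}^{-1}$, which are unbounded in general, so $Z$ is \emph{not} ``obtained from $\Theta$ by composition with fixed bounded operators and inversion.'' Similarly, the invertibility of $I+\Gamma_0^*Z(\lambda)$ does not follow from $\Gamma_0^*$ mapping $\sD_{\Gamma_0^*}$ into $\sD_{\Gamma_0}$ --- $\|\Gamma_0^*Z\|$ can be arbitrarily close to $1$; the correct reason is $\|Z(\lambda)\|\le|\lambda|<1$ by Schwarz's lemma, which is only available \emph{after} $Z$ has been constructed as a Schur function vanishing at $0$. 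Your displayed identity for $I-\Theta^*\Theta$ is correct and does yield $\|Z(\lambda)\|\le 1$ once a bounded $Z$ satisfying \eqref{MREP} exists, but it cannot produce $Z$. The standard way to close the gap --- the route behind \cite{Shmul1}, \cite{Const}, and the one whose machinery this paper itself uses in Section \ref{MATRMOD} --- is to take a conservative realization $\Theta(\lambda)=\Gamma_0+\lambda C(I_\sH-\lambda A)^{-1}B$ and apply the parametrization of contractive block operator matrices with fixed corner (\cite{AG}, \cite{DaKaWe}, \cite{ShYa}):
\begin{equation*}
C=D_{\Gamma_0^*}K,\qquad B=MD_{\Gamma_0},\qquad A=-M\Gamma_0^*K+D_{M^*}XD_K,
\end{equation*}
with contractions $K$, $M$, $X$; a direct computation then gives \eqref{MREP} with $Z(\lambda)=\lambda K(I_\sH-\lambda D_{M^*}XD_K)^{-1}M$, which is the transfer function of a passive system and hence belongs to ${\bf S}(\sD_{\Gamma_0},\sD_{\Gamma_0^*})$.
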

The representation \eqref{MREP} of a function $\Theta(\lambda)$ from
the Schur class is called the M\"obius representation of
$\Theta(\lambda)$ and the function $Z(\lambda)$ is called the
M\"obius parameter of $\Theta(\lambda)$ (see \cite{ARL1},
\cite{Arlarxiv}). Clearly, $Z(0)=0$ and by Schwartz's lemma we
obtain that
\[
||Z(\lambda)||\le|\lambda|,\;\lambda\in\dD.
\]

\textit{The operator Schur's algorithm} \cite{BC}. Fix
$\Theta(\lambda)\in{\bf S}(\sM,\sN)$, put
$\Theta_0(\lambda)=\Theta(\lambda)$ and let $Z_0(\lambda)$ be the
M\"obius parameter of $\Theta$. Define
\[
\Gamma_0=\Theta(0),\;
\Theta_1(\lambda)=\frac{Z_0(\lambda)}{\lambda}\in {\bf
S}(\sD_{\Gamma_0},\sD_{\Gamma^*_0}),\;\Gamma_1= \Theta_1(0)=Z'_0(0).
\]
If $\Theta_0(\lambda),\ldots,\Theta_n(\lambda)$ and
$\Gamma_0,\ldots, \Gamma_n$ have been chosen, then let
$Z_{n+1}(\lambda)\in {\bf S}(\sD_{\Gamma_n},\sD_{\Gamma^*_n})$ be
the M\"obius parameter of $\Theta_n$. Put
\[
\Theta_{n+1}(\lambda)=\frac{Z_{n+1}(\lambda)}{\lambda},\;
\Gamma_{n+1}=\Theta_{n+1}(0).
\]
 The contractions $\Gamma_0\in\bL(\sM,\sN),$
$\Gamma_n\in\bL(\sD_{\Gamma_{n-1}},\sD_{\Gamma^*_{n-1}})$,
$n=1,2,\ldots$ are called the \textit{Schur parameters} of
$\Theta(\lambda)$ and the function $\Theta_n(\lambda) \in {\bf
S}(\sD_{\Gamma_{n-1}},\sD_{\Gamma^*_{n-1}})$ we will call the $n-th$
\textit{Schur iterate} of  $\Theta(\lambda)$.

Formally we have
\[
\Theta_{n+1}(\lambda)\uphar\ran
D_{\Gamma_n}=\frac{1}{\lambda}D_{\Gamma^*_n}(I_{\sD_{\Gamma^*_n}}-\Theta_n(\lambda)\Gamma^*_n)^{-1}
(\Theta_n(\lambda)-\Gamma_n)D^{-1}_{\Gamma_n}\uphar\ran
D_{\Gamma_n}.
\]
Clearly, the sequence of Schur parameters $\{\Gamma_n\}$ is infinite
if and only if all operators $\Gamma_n$ are non-unitary. The
sequence of Schur parameters consists of a finite number of
operators $\Gamma_0,$ $\Gamma_1,\ldots, \Gamma_N$ if and only if
$\Gamma_N\in\bL(\sD_{\Gamma_{N-1}},\sD_{\Gamma^*_{N-1}})$ is
unitary. If $\Gamma_N$ is isometric (co-isometric) then $\Gamma_n=0$
for all $n>N$.
 The following generalization of the classical Schur result is proved in \cite{Const}
(see also \cite{BC}).
\begin{theorem} \label{SchurAlg} There is a one-to-one
correspondence between the Schur class functions ${\bf S}(\sM,\sN)$
and the set of all sequences of contractions $\{\Gamma_n\}_{n\ge 0}$
such that
\begin{equation}
\label{CHSEQ} \Gamma_0\in\bL(\sM,\sN),\;\Gamma_n\in
\bL(\sD_{\Gamma_{n-1}},\sD_{\Gamma^*_{n-1}}),\; n\ge 1.
\end{equation}
\end{theorem}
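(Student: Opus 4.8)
The plan is to read Theorem \ref{SchurAlg} as the assertion that the operator Schur algorithm described above is a bijection between ${\bf S}(\sM,\sN)$ and the admissible sequences \eqref{CHSEQ}, and to establish it in three stages: well-definedness of the forward map $\Theta\mapsto\{\Gamma_n\}$, its injectivity, and its surjectivity.

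First I would check that the forward map is well defined and lands in the admissible set \eqref{CHSEQ}. This is immediate by induction from Theorem \ref{MO}: once $\Theta_n\in{\bf S}(\sD_{\Gamma_{n-1}},\sD_{\Gamma^*_{n-1}})$ has been produced, its M\"obius parameter $Z_{n+1}$ lies in ${\bf S}(\sD_{\Gamma_n},\sD_{\Gamma^*_n})$ and satisfies $\|Z_{n+1}(\lambda)\|\le|\lambda|$ by the Schwarz lemma, so $\Theta_{n+1}(\lambda)=Z_{n+1}(\lambda)/\lambda$ is again holomorphic and contractive on $\dD$, i.e. belongs to the same Schur class, and $\Gamma_{n+1}=\Theta_{n+1}(0)$ is a contraction of exactly the type demanded in \eqref{CHSEQ}.

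For injectivity, the key step is to show that $\Gamma_0,\dots,\Gamma_n$ determine the Taylor coefficients of $\Theta$ at the origin up to order $n$. Writing $Z_0(\lambda)=\lambda\Theta_1(\lambda)$ in \eqref{MREP} gives
\[
\Theta(\lambda)=\Gamma_0+D_{\Gamma^*_0}\,\lambda\Theta_1(\lambda)\bigl(I+\Gamma^*_0\lambda\Theta_1(\lambda)\bigr)^{-1}D_{\Gamma_0},
\]
and expanding the resolvent as a geometric series shows that the coefficient of $\lambda^k$ depends only on the Taylor coefficients of $\Theta_1$ of order at most $k-1$. An induction on $n$ then identifies the $n$-th coefficient of $\Theta$ as a universal expression in $\Gamma_0,\dots,\Gamma_n$ and their defect operators. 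Since a holomorphic $\bL(\sM,\sN)$-valued function on $\dD$ is determined by its Taylor expansion at $0$, two functions with identical Schur parameters must coincide.

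For surjectivity, given an admissible sequence $\{\Gamma_n\}$ I would construct approximants by truncation: for each $N$ set the $N$-th iterate equal to the constant $\Gamma_N$ and run the backward recursion inverse to Theorem \ref{MO}, namely
\[
\Theta^{(N)}_n(\lambda)=\Gamma_n+D_{\Gamma^*_n}\lambda\Theta^{(N)}_{n+1}(\lambda)\bigl(I+\Gamma^*_n\lambda\Theta^{(N)}_{n+1}(\lambda)\bigr)^{-1}D_{\Gamma_n}
\]
for $n=N-1,\dots,0$, and set $\Theta^{(N)}=\Theta^{(N)}_0$. Because $\lambda\Theta^{(N)}_{n+1}(\lambda)$ is a Schur-class function vanishing at $0$, this M\"obius transform keeps each $\Theta^{(N)}_n$ in the Schur class, so $\Theta^{(N)}\in{\bf S}(\sM,\sN)$; moreover, by the coefficient computation of the injectivity step, $\Theta^{(N)}$ and $\Theta^{(N+1)}$ agree to order $N$. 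As the family $\{\Theta^{(N)}\}$ is uniformly bounded by $1$ on $\dD$, Montel's theorem yields a locally uniform limit $\Theta\in{\bf S}(\sM,\sN)$, and the stabilization of the Taylor coefficients forces the whole sequence to converge to this $\Theta$ and forces the Schur parameters of $\Theta$ to be exactly $\{\Gamma_n\}$. The hard part will be this final convergence-and-identification step: one must verify that the backward recursion is a genuine inverse to the forward M\"obius step along the correct chain of defect spaces $\sD_{\Gamma_n},\sD_{\Gamma^*_n}$, and that passage to the limit neither leaves the Schur class nor perturbs the prescribed parameters.
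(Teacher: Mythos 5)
Your proposal is correct in outline but takes a genuinely different route from the paper on both halves of the bijection. For injectivity the paper (Theorem \ref{UNIQ}) does not match Taylor coefficients: it iterates the relation $\Theta_n=\Gamma_n+\lambda D_{\Gamma^*_n}(I+\lambda\Theta_{n+1}\Gamma^*_n)^{-1}\Theta_{n+1}D_{\Gamma_n}$ to obtain the bound $\|\Theta_0(\lambda)-\wh\Theta_0(\lambda)\|\le 2|\lambda|\bigl(|\lambda|/(1-|\lambda|)^2\bigr)^{n+1}$, concludes equality on the disk $|\lambda|<(3-\sqrt 5)/2$, and extends by analyticity; your coefficient induction is a sound and arguably more elementary substitute. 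For surjectivity the divergence is larger: the paper's existence proof (Theorems \ref{CR} and \ref{cmvmod}, with Section \ref{REST} covering isometric, co-isometric and unitary parameters) is constructive --- it builds the block operator CMV colligation $\cU_0(\{\Gamma_n\}_{n\ge 0})$ and shows that the transfer function of the associated simple conservative system has exactly the prescribed parameters --- and this construction is the main point of the paper, since it simultaneously produces the CMV realization. Your truncation-and-limit argument produces the function purely function-theoretically, with no realization attached; it is shorter and treats all cases uniformly, but it buys less.

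Two points in your surjectivity step need repair or completion. First, Montel's theorem is not available for $\bL(\sM,\sN)$-valued functions when $\sM,\sN$ are infinite dimensional: norm-bounded sets of operators are not norm-compact (constant functions equal to orthogonal projections onto pairwise orthogonal subspaces form a family bounded by $1$ with no locally uniformly convergent subsequence). Fortunately you do not need it: for $k\le N$ the $k$-th Taylor coefficient of $\Theta^{(N)}$ is a fixed operator $c_k$ with $\|c_k\|\le 1$ by Cauchy's estimate, so you may define $\Theta(\lambda)=\sum_k c_k\lambda^k$ directly; then $\|\Theta(\lambda)-\Theta^{(N)}(\lambda)\|\le 2|\lambda|^{N+1}/(1-|\lambda|)$, so $\Theta^{(N)}\to\Theta$ locally uniformly and contractivity passes to the limit. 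Second, the step you defer is genuinely the crux, because it is used twice: to know that $\Theta^{(N)}$ lies in ${\bf S}(\sM,\sN)$ at all, and to identify the Schur parameters of $\Theta^{(N)}$ and of the limit. Both needed facts are true and standard. Contractivity of the backward step $\Phi=\Gamma+D_{\Gamma^*}Z(I+\Gamma^*Z)^{-1}D_{\Gamma}$ with $Z=\lambda\Psi$ follows from the identity
\[
I-\Phi(\lambda)^*\Phi(\lambda)=D_{\Gamma}\bigl(I+Z(\lambda)^*\Gamma\bigr)^{-1}\bigl(I-Z(\lambda)^*Z(\lambda)\bigr)\bigl(I+\Gamma^*Z(\lambda)\bigr)^{-1}D_{\Gamma}\ \ge\ 0,
\]
and the identification of the parameters follows from uniqueness of the M\"obius parameter in \eqref{MREP}: since $D_{\Gamma^*}$ is injective on $\sD_{\Gamma^*}$ and $D_{\Gamma}$ has dense range in $\sD_{\Gamma}$, the operator $W=Z(I+\Gamma^*Z)^{-1}$, and hence $Z=W(I-\Gamma^*W)^{-1}$, is recoverable from $\Phi-\Gamma$, so the forward Schur step applied to $\Phi$ returns $\Gamma$ and $\Psi$. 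With these two lemmas written out, your plan is a complete proof.
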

A sequence of contractions of the form \eqref{CHSEQ} is called the
\textit{choice sequence} \cite{CF}. Such objects are used
 for the indexing of contractive intertwining
dilations, of positive Toeplitz forms, and of the Naimark dilations
of semi-spectral measures on the unit circle (see \cite{CF},
\cite{Const1}, \cite{Const3}, \cite{BC}, \cite{Const2}). Observe
that the Naimark dilation and the model of a simple conservative
system are given in \cite{Const1}, \cite{Const}, and \cite{BC}  by
infinite in all sides block operator matrix whose entries are
expressed by means of the choice sequence or the Schur parameters.

Let us describe the main results of our paper. Given a choice
sequence \eqref{CHSEQ}. We construct the Hilbert spaces
$\sH_0=\sH_0(\{\Gamma_n\}_{n\ge 0})$,
$\wt\sH_0=\wt\sH_0(\{\Gamma_n\}_{n\ge 0})$ , the unitary operators
\[
\cU_0=\cU_0(\{\Gamma_n\}_{n\ge 0})=\begin{bmatrix} \Gamma_0&\cG_0\cr
\cF_0
&\cT_0\end{bmatrix}:\begin{array}{l}\sM\\\oplus\\\sH_0\end{array}\to
\begin{array}{l}\sN\\\oplus\\\sH_0\end{array},\;
\wt\cU_0=\wt\cU_0(\{\Gamma_n\}_{n\ge 0})=\begin{bmatrix}
\Gamma_0&\wt\cG_0\cr\wt\cF_0
&\wt\cT_0\end{bmatrix}:\begin{array}{l}\sM\\\oplus\\\wt
\sH_0\end{array}\to
\begin{array}{l}\sN\\\oplus\\\wt\sH_0\end{array},
\]
and the unitarily equivalent simple conservative systems
\[
\zeta_0=\left\{\begin{bmatrix} \Gamma_0&\cG_0\cr \cF_0
&\cT_0\end{bmatrix};\sM,\sN,\sH_0\right\},\;
\wt\zeta_0=\left\{\begin{bmatrix} \Gamma_0&\wt\cG_0\cr\wt\cF_0
&\wt\cT_0\end{bmatrix};\sM,\sN,\wt\sH_0\right\},
\]
such that the Schur parameters of the transfer function $\Theta$ of
the systems $\zeta_0$ and $\wt\zeta_0$ are precisely
$\{\Gamma_n\}_{n\ge 0}$. Moreover, the operators $\cU_0$ and
$\wt\cU_0$ in such constructions are given by the operator analogs
of the CMV matrices. In the case when the operators $\Gamma_n$ are
neither isometric nor co-isometric for each $n=0,1,\ldots$, the
Hilbert spaces $\sH_0$ and $\wt\sH_0$ are of the form
\[%begin{equation}
%\label{statespaces}
%\begin{array}{l}
\sH_0=\sum\limits_{n\ge
0}\bigoplus\begin{array}{l}\sD_{\Gamma_{2n}}\\\oplus\\\sD_{\Gamma^*_{2n+1}}\end{array},\;%\\
\wt\sH_0=\sum\limits_{n\ge
0}\bigoplus\begin{array}{l}\sD_{\Gamma^*_{2n}}\\\oplus\\\sD_{\Gamma_{2n+1}}\end{array},
%\end{array}
\]%end{equation}
and the operators $\cU_0$ and $\wt\cU_0$
 are given by the products of unitary diagonal operator matrices
\[%begin{equation}
%\label{T0prod}
\cU_0=\begin{bmatrix}{\bf J}_{\Gamma_0}\cr &{\bf J}_{\Gamma_2}\cr &
&{\bf J}_{\Gamma_4}\cr & &&\ddots
\end{bmatrix}\begin{bmatrix}I_{\sM}\cr&{\bf J}_{\Gamma_1}\cr &&{\bf
J}_{\Gamma_3}\cr & &&\ddots\end{bmatrix},\;
\]%end{equation}
\[%begin{equation}
%\label{WT0prod}
\wt\cU_0=\begin{bmatrix}I_{\sN}\cr &{\bf J}_{\Gamma_1}\cr &&{\bf
J}_{\Gamma_3}\cr & & &\ddots\end{bmatrix}\begin{bmatrix}{\bf
J}_{\Gamma_0}\cr &{\bf J}_{\Gamma_2}\cr & &{\bf J}_{\Gamma_4}\cr &
&&\ddots
\end{bmatrix},
\]%end{equation}
where
\[%\begin{array}{l}
{\bf J}_{\Gamma_0}=\begin{bmatrix} \Gamma_0& D_{\Gamma^*_0}\cr
D_{\Gamma_0}&-\Gamma^*_0\end{bmatrix}:\begin{array}{l}\sM\\\oplus\\\sD_{\Gamma^*_0}\end{array}\to
\begin{array}{l}\sN\\\oplus\\\sD_{\Gamma_{0}}\end{array},\;%\\
{\bf J}_{\Gamma_k}=\begin{bmatrix} \Gamma_k& D_{\Gamma^*_k}\cr
D_{\Gamma_k}&-\Gamma^*_k\end{bmatrix}:\begin{array}{l}\sD_{\Gamma_{k-1}}\\\oplus\\\sD_{\Gamma^*_k}\end{array}\to
\begin{array}{l}\sD_{\Gamma^*_{k-1}}\\\oplus\\\sD_{\Gamma_{k}}\end{array},\;
k=1,2,\ldots
%\end{array}
\]
are the unitary operators called "elementary rotations" \cite{BC}.
The operators $\cU_0$ and $\wt\cU_0$ take the form of  five-diagonal
block operator matrices
\[
\cU_0=
\begin{bmatrix}
\Gamma_0&D_{\Gamma^*_0}\Gamma_1&D_{\Gamma^*_0}D_{\Gamma^*_1}&0&0&0&0&0&\ldots\cr
D_{\Gamma_0}&-\Gamma^*_0\Gamma_1&-\Gamma^*_0D_{\Gamma^*_1}&0&0&0&0&0&\ldots\cr
0&\Gamma_2D_{\Gamma_1}&-\Gamma_2\Gamma^*_1&D_{\Gamma^*_2}\Gamma_3&D_{\Gamma^*_2}D_{\Gamma^*_3}&0&0&0&\ldots\cr
0&D_{\Gamma_2}D_{\Gamma_1}&-D_{\Gamma_2}\Gamma^*_1&-\Gamma^*_2\Gamma_3&-\Gamma^*_2D_{\Gamma^*_3}&0&0&0&\ldots\cr
0&0&0&\Gamma_4D_{\Gamma_3}&-\Gamma_4\Gamma^*_3&D_{\Gamma^*_4}\Gamma_5&D_{\Gamma^*_4}D_{\Gamma^*_5}&0&\dots\cr
%0&0&0&D_{\Gamma_4}D_{\Gamma_3}&-D_{\Gamma_4}\Gamma^*_3&-\Gamma^*_4\Gamma_5&-\Gamma_4D_{\Gamma^*_5}&0&\ldots&\ldots\cr
%0&0&0&0&0&\Gamma_6D_{\Gamma_5}&-\Gamma_6\Gamma^*_5&D_{\Gamma^*_6}\Gamma_7&D_{\Gamma^*_6}D_{\Gamma^*_7}&\ldots\cr
\vdots&\vdots&\vdots&\vdots&\vdots&\vdots&\vdots&\vdots&\vdots
\end{bmatrix}
\]
and
\[
 \wt\cU_0=
\begin{bmatrix}
\Gamma_0&D_{\Gamma^*_0}&0&0&0&0&0&\ldots\cr
\Gamma_1D_{\Gamma_0}&-\Gamma_1\Gamma^*_0&D_{\Gamma^*_1}\Gamma_2&D_{\Gamma^*_1}D_{\Gamma^*_2}&0&0&0&\ldots\cr
D_{\Gamma_1}D_{\Gamma_0}&-D_{\Gamma_1}\Gamma^*_0&-\Gamma^*_1\Gamma_2&-\Gamma^*_1D_{\Gamma^*_2}&0&0&0&\ldots\cr
0&0&\Gamma_3D_{\Gamma_2}&-\Gamma_3\Gamma^*_2&D_{\Gamma^*_3}\Gamma_4&D_{\Gamma^*_3}D_{\Gamma^*_4}&0&\ldots\cr
0&0&D_{\Gamma_3}D_{\Gamma_2}&-D_{\Gamma_3}\Gamma^*_2&-\Gamma^*_3\Gamma_4&-\Gamma^*_3D_{\Gamma^*_4}&0&\ldots\cr
%0&0&0&0&\Gamma_5D_{\Gamma_4}&-\Gamma_5\Gamma^*_4&D_{\Gamma^*_5}\Gamma_6&D_{\Gamma^*_5}D_{\Gamma^*_6}&0&\ldots\cr
%0&0&0&0&D_{\Gamma_5}D_{\Gamma_4}&-D_{\Gamma_5}\Gamma^*_4&-\Gamma^*_5\Gamma_6&-\Gamma^*_5D_{\Gamma^*_6}&0&\ldots\cr
\vdots&\vdots&\vdots&\vdots&\vdots&\vdots&\vdots&\vdots&%\vdots
\end{bmatrix}
\]
Note that the following relation
\[
\wt\cU_0(\{\Gamma_n\}_{n\ge 0})=\left(\cU_0(\{\Gamma^*_n\}_{n\ge
0}\right)^*.
\]
holds true. Hence the CMV matrix \eqref{CCMV} corresponds to the
case
\[
\begin{array}{l}
\sM=\sN=\sD_{\Gamma_0}=\sD_{\Gamma^*_0}=\sD_{\Gamma_1}=\sD_{\Gamma^*_1}=\ldots=\sD_{\Gamma_n}=\sD_{\Gamma^*_n}
=\ldots=\dC^k,\\
\alpha_n=\Gamma^*_n,\; n=0,1,\ldots,\;
\end{array}
\]
Thus,
\[
\cC(\{\alpha_n\})=\cU_0(\{\Gamma_n^*\}_{n\ge
0})=\left(\wt\cU_0(\{\Gamma_n\}_{n\ge 0}\right)^*,\;
\wt\cC(\{\alpha_n\})=\wt\cU_0(\{\Gamma_n^*\}_{n\ge
0})=\left(\cU_0(\{\Gamma_n\}_{n\ge 0}\right)^*.
\]
The block operator truncated CMV matrices
\[
\cT_0=\cT_0(\{\Gamma_n\}_{n\ge
0}):=P_{\sH_0}\cU_0\uphar\sH_0\;\mbox{ and}\;
\wt\cT_0=\wt\cT_0(\{\Gamma_n\}_{n\ge
0}):=P_{\wt\sH_0}\wt\cU_0\uphar\wt\sH_0
\] are given by
\[
\cT_0=\begin{bmatrix}-\Gamma^*_0\cr &{\bf J}_{\Gamma_2}\cr & &{\bf
J}_{\Gamma_4}\cr & &&\ddots\end{bmatrix}
\begin{bmatrix}{\bf
J}_{\Gamma_1}\cr &{\bf J}_{\Gamma_3}\cr & &\ddots\end{bmatrix},\;
%end{equation}
%begin{equation}%\label{WT0prod}
\wt\cT_0=\begin{bmatrix}{\bf J}_{\Gamma_1}\cr &{\bf J}_{\Gamma_3}\cr
& &\ddots\end{bmatrix}\begin{bmatrix}-\Gamma^*_0\cr &{\bf
J}_{\Gamma_2}\cr & &{\bf J}_{\Gamma_4}\cr & &&\ddots\end{bmatrix},
\]
and can be rewritten in the three diagonal block operator matrix
form with $2\times 2$ entries
\[
  \cT_0=\begin{bmatrix}
\cB_1 & \cC_1 & 0 &0& 0&
\cdot  \\
 \cA_1 & \cB_2 & \cC_2 &0& 0 &
\cdot   \\
0&\cA_2&\cB_3&\cC_3&0&\cdot\\
 \vdots & \vdots & \vdots & \vdots & \vdots
& \vdots
\end{bmatrix},\;
  \wt\cT_0=\begin{bmatrix}
\wt\cB_1 & \wt\cC_1 & 0 &0& 0&
\cdot  \\
 \wt\cA_1 & \wt\cB_2 & \wt\cC_2 &0& 0 &
\cdot   \\
0&\wt\cA_2&\wt\cB_3&\wt\cC_3&0&\cdot\\
 \vdots & \vdots & \vdots & \vdots & \vdots
& \vdots
\end{bmatrix}.
\]
The constructions above and the corresponding results are presented
in Section \ref{BCMVR}. We essentially rely on the constructions of
simple conservative realizations of the Schur iterates
$\{\Theta_n(\lambda)\}_{n\ge 1}$ by means of a given simple
conservative realization of the function $\Theta\in{\bf S}(\sM,\sN)$
\cite{Arlarxiv}. A brief survey of the results in \cite{Arlarxiv}
are given in Section \ref{gener}. The cases when the Schur parameter
$\Gamma_m\in\bL(\sD_{\Gamma_{m-1}},\sD_{\Gamma^*_{m-1}})$ of the
function  $\Theta\in{\bf S}(\sM,\sN)$, is isometric, co-isometric,
unitary are considered in detail in Section \ref{REST}. Observe that
in fact we give another prove of Theorem \ref{SchurAlg} (the
uniqueness of the function from ${\bf S}(\sM,\sN)$ with with given
its Schur parameters is proved in Section \ref{UNIQUENESS}). In
Section \ref{DIL} we obtain in the block operator CMV matrix form
the minimal unitary dilations of a contraction and the minimal
Naimark dilations of a semi-spectral measure on the unite circle.
Another and more complicated constructions of the minimal Naimark
dilation and a simple conservative realization for a function
$\Theta\in{\bf S}(\sM,\sN)$ by means of its Schur parameters are
given in \cite{Const1} and in \cite{KailBruck}, respectively (see
also \cite{BC}). Simple conservative realizations of scalar Schur
functions with operators $A,B$, $C$, and $D$ expressed via
corresponding Schur parameters have been obtained by V.~Duboboj
\cite{D1}.

We also prove in Section \ref{DIL} that a unitary operator $U$ in a
separable Hilbert space $\sK$ having a cyclic subspace $\sM$ ($
\cspan\{U^n \sM,\;n\in\dZ\}=\sK$) is unitarily equivalent to the
block operator CMV matrices $\cU_0$ and $\wt\cU_0$ constructed by
means of the Schur parameters of the function
$\Theta(\lambda)=\cfrac{1}{\lambda}\,(F^*_\sM(\bar\lambda)-I_\sM)(F^*_\sM(\bar\lambda)+I_\sM)^{-1},$
where $F_\sM(\lambda)=P_\sM(U+\lambda I_\sK)(U-\lambda
I_\sK)^{-1}\uphar\sM,$ $\lambda\in\dD$.

In the last Section \ref{MATRMOD} we prove that the Sz.-Nagy--Foias
\cite{SF} characteristic functions of truncated block operator CMV
matrices $\cT_0$ and $\wt\cT_0$, constructing by means of the Schur
parameters $\{\Gamma_n\}_{n\ge 0}$ of a purely contractive function
$\Theta\in{\bf S}(\sM,\sN)$, coincide with $\Theta$ in the sense of
\cite{SF}.

\section{The Schur class functions and their iterates}
\label{UNIQUENESS}

In the sequel we need the well known fact \cite{SF}, \cite{BC} that
if $T\in\bL(\sH_1,\sH_2)$ is a contraction which is neither
isometric nor co-isometric, then the operator (\textit{elementary
rotation} \cite{BC}) ${\bf J}_T$ given by the operator matrix
\[
{\bf J}_T=\begin{bmatrix} T &D_{T^*}\cr
D_T&-T^*\end{bmatrix}:\begin{array}{l}\sH_1\\\oplus\\\sD_{T^*}\end{array}\to
\begin{array}{l}\sH_2\\\oplus\\\sD_{T}\end{array}
\]
is unitary.  Clearly, ${\bf J}^*_{T}={\bf J}_{T^*}$.
 If $T$ is
isometric or co-isometric, then the corresponding unitary elementary
rotation takes the row or the column form
\[
{\bf J}^{(r)}_T=\begin{bmatrix} T
&I_{\sD_{T^*}}\end{bmatrix}:\begin{array}{l}\sH_1\\\oplus\\\sD_{T^*}\end{array}\to
\sH_2,\; {\bf J}^{(c)}_T =\begin{bmatrix} T \cr
D_T\end{bmatrix}:\sH_1\to\begin{array}{l}\sH_2\\\oplus\\\sD_{T}\end{array},
\]
and
\[
\left({\bf J}^{(r)}_T\right)^*={\bf J}^{(c)}_{T^*}.
\]
In Section \ref{BCMVR} we will need the following statement.
\begin{proposition}
\label{ranges} \cite{AHS2}. Let $T$ be a contraction. Then
$Th=D_{T^*}g$ if and only if there exists a vector $\f\in\sD_{T}$
such that $h=D_{T}\f$ and $g=T\f$.
\end{proposition}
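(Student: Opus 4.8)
The plan is to prove the two implications separately. The forward (``if'') direction will be a one-line consequence of the intertwining relations \eqref{defect}, while the reverse (``only if'') direction requires producing an explicit candidate for $\f$ and then checking that it actually lies in $\sD_T$.

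For the ``if'' direction, suppose $\f\in\sD_T$ satisfies $h=D_T\f$ and $g=T\f$. Applying $T$ to $h$ and using the relation $TD_T=D_{T^*}T$ from \eqref{defect} gives $Th=TD_T\f=D_{T^*}T\f=D_{T^*}g$, as required. Nothing more is needed here.

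For the ``only if'' direction, the main task is to guess the right $\f$. Since I want $D_T\f=h$ and $T\f=g$ simultaneously, I would set $\f:=D_Th+T^*g$ and verify both identities by direct computation. Using $D_T^2=I-T^*T$, the relation $D_TT^*=T^*D_{T^*}$, and the hypothesis $D_{T^*}g=Th$, I compute $D_T\f=(I-T^*T)h+T^*D_{T^*}g=(I-T^*T)h+T^*Th=h$. Similarly, using $TD_T=D_{T^*}T$ and $D_{T^*}^2=I-TT^*$, I compute $T\f=D_{T^*}Th+TT^*g=D_{T^*}^2g+TT^*g=(I-TT^*)g+TT^*g=g$. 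Thus $\f=D_Th+T^*g$ realizes both required equations.

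The step I expect to be genuinely delicate is the verification that $\f\in\sD_T$ (as opposed to merely $\f\in\sH_1$). The summand $D_Th$ is harmless, since $D_Th\in\ran D_T\subseteq\sD_T$. For the summand $T^*g$ I would invoke the inclusion $T^*\sD_{T^*}\subseteq\sD_T$ recorded just after \eqref{defect}; this is precisely where the fact $g\in\sD_{T^*}$ is used. I would note that this is not an extra hypothesis but is forced by the conclusion itself: if $g=T\f$ with $\f\in\sD_T$, then $g\in T\sD_T\subseteq\sD_{T^*}$, so the statement is naturally read with $g\in\sD_{T^*}$, and under that reading $T^*g\in\sD_T$ makes $\f$ a legitimate element of $\sD_T$ and closes the equivalence. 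The two operator identities above are routine once the formula $\f=D_Th+T^*g$ is written down, so this membership check is the only point that carries real content.
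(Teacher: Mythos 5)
The paper itself offers no proof of this proposition --- it is quoted from \cite{AHS2} --- so there is nothing internal to compare against; your argument stands on its own, and it is correct. The forward direction via $TD_T=D_{T^*}T$ is right, and for the converse your candidate $\f=D_Th+T^*g$ does satisfy $D_T\f=h$ and $T\f=g$ by exactly the computations you give (this is also essentially the argument in the cited source). Your closing observation is in fact the crucial point rather than a side remark: as literally transcribed, with $g$ unrestricted, the proposition is \emph{false}. For a non-unitary isometry $T$ one has $\sD_T=\{0\}$ while $D_{T^*}=I-TT^*$ is the projection onto $(\ran T)^{\perp}$, so taking $h=0$ and any nonzero $g\in\ran T$ gives $Th=0=D_{T^*}g$, yet no $\f\in\sD_T$ can produce $g=T\f$. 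The statement in \cite{AHS2} indeed carries the hypothesis $g\in\sD_{T^*}$ (which is how the proposition is always invoked later in the paper, e.g.\ in deriving \eqref{kernels}), and that hypothesis is precisely what makes $T^*g\in T^*\sD_{T^*}\subset\sD_T$ and hence $\f\in\sD_T$. With that reading, which you correctly reconstructed from the ``only if'' side, your proof is complete.
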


Recall that if $\Theta(\lambda)\in{\bf S}(\sH_1,\sH_2)$ then there
is a uniquely determined decomposition \cite[Proposition V.2.1]{SF}
\[
\Theta(\lambda)=\begin{bmatrix}\Theta_p(\lambda)&0\cr
0&\Theta_u\end{bmatrix}:\begin{array}{l}\sD_{\Theta(0)}\\\oplus\\\ker
D_{\Theta(0)}\end{array}\to \begin{array}{l}
 \sD_{\Theta^*(0)}\\\oplus\\\ker D_{\Theta^*(0)}\end{array},
\]
where $\Theta_p(\lambda)\in{\bf
S}(\sD_{\Theta(0)},\sD_{\Theta^*(0)})$,  $\Theta_p(0)$ is a pure
contraction and $\Theta_u$ is a unitary constant. The function
$\Theta_p(\lambda)$ is called the \textit{pure part} of
$\Theta(\lambda)$ (see {\cite{BC}). If $\Theta(0)$ is isometric
(respect., co-isometric) then the pure part is of the form
$\Theta_p(\lambda)=0\in {\bf S}(\{0\},\sD_{\Theta^*(0)})$ (respect.,
$\Theta_p(\lambda)=0\in {\bf S}(\sD_{\Theta(0)}, \{0\})$). The
function $\Theta$ is called purely contractive if $\ker
D_{\Theta(0)}=\{0\}$. Two operator-valued functions $\Theta\in{\bf
S}(\sM,\sN)$ and $\Omega\in{\bf S}(\sK,\sL)$ \textit{coincide}
\cite{SF} if there are two unitary operators $V:\sN\to\sL$ and
$U:\sK\to\sM$ such that
\begin{equation}
\label{COINC} V\Theta(\lambda)U=\Omega(\lambda),\quad \lambda\in\dD.
\end{equation}
For the corresponding Schur parameters and the Schur iterates
relation \eqref{COINC} yields the equalities
\begin{equation}
\label{COINC1}
\begin{array}{l}
G_n=V\Gamma_nU,\\
 \sD_{G_n}=U^*\sD_{\Gamma_n},\;
\sD_{G^*_n}=V\sD_{\Gamma^*_n},\;
D_{G_n}=U^*D_{\Gamma_n}U,\;D_{G^*_n}=VD_{\Gamma^*_n}V^*,\\
V\Theta_n(\lambda)U=\Omega_n(\lambda),\;\lambda\in\dD
\end{array}
\end{equation}
for all $n=0,1,\ldots.$

 In what follows we
give a proof of Theorem \ref{CHSEQ} different from the original one
in \cite{Const}. First of all we will prove the uniqueness. The
existence will be proved in Section \ref{BCMVR}.
\begin{theorem}
\label{UNIQ} Any choice sequence uniquely determines a Schur class
function.
\end{theorem}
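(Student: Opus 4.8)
The plan is to establish the injectivity half of Theorem \ref{SchurAlg}: if $\Theta,\Omega\in{\bf S}(\sM,\sN)$ yield the same choice sequence $\{\Gamma_n\}_{n\ge0}$, then $\Theta=\Omega$. Note first that $\sM$ and $\sN$ are the domain and codomain of $\Gamma_0$, and each defect space $\sD_{\Gamma_n},\sD_{\Gamma^*_n}$ is determined by $\Gamma_{n-1}$; hence both functions, together with all their Schur iterates $\Theta_n,\Omega_n$, are defined on the same spaces and satisfy $\Theta_n(0)=\Omega_n(0)=\Gamma_n$ for every $n$. The existence of a function realizing a prescribed choice sequence is deferred to Section \ref{BCMVR}, so only uniqueness is needed here.

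The first step is to record, via Theorem \ref{MO}, the recursion linking consecutive iterates. Since $Z_{n+1}(\lambda)=\lambda\Theta_{n+1}(\lambda)$ is by construction the M\"obius parameter of $\Theta_n$, the representation \eqref{MREP} applied to $\Theta_n$ gives
\[
\Theta_n(\lambda)=\Gamma_n+D_{\Gamma^*_n}\,\lambda\Theta_{n+1}(\lambda)\bigl(I+\Gamma^*_n\lambda\Theta_{n+1}(\lambda)\bigr)^{-1}D_{\Gamma_n},
\]
and the identical formula holds for $\Omega$. Writing $\delta_n(\lambda)=\Theta_n(\lambda)-\Omega_n(\lambda)$ and abbreviating $Z=\lambda\Theta_{n+1}(\lambda)$, $W=\lambda\Omega_{n+1}(\lambda)$, I would subtract the two representations and invoke the resolvent identity $(I+\Gamma^*_nZ)^{-1}-(I+\Gamma^*_nW)^{-1}=(I+\Gamma^*_nZ)^{-1}\Gamma^*_n(W-Z)(I+\Gamma^*_nW)^{-1}$ to obtain
\[
\delta_n=D_{\Gamma^*_n}\Bigl[Z(I+\Gamma^*_nZ)^{-1}\Gamma^*_n(W-Z)(I+\Gamma^*_nW)^{-1}+(Z-W)(I+\Gamma^*_nW)^{-1}\Bigr]D_{\Gamma_n}.
\]

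Next I would fix $\rho\in(0,1)$ and restrict to $|\lambda|\le\rho$. As the iterates are contractive and $\|\Gamma_n\|\le1$, one has $\|Z\|,\|W\|\le\rho$, hence $\|(I+\Gamma^*_nZ)^{-1}\|,\|(I+\Gamma^*_nW)^{-1}\|\le(1-\rho)^{-1}$, while the defect operators have norm $\le1$. Estimating the bracket term by term and using the algebraic collapse $\tfrac{\rho}{(1-\rho)^2}+\tfrac{1}{1-\rho}=\tfrac{1}{(1-\rho)^2}$ bounds its constant by $(1-\rho)^{-2}$; since moreover $\|Z-W\|=|\lambda|\,\|\delta_{n+1}(\lambda)\|\le\rho\,\|\delta_{n+1}(\lambda)\|$, this yields the contraction-type inequality
\[
\|\delta_n(\lambda)\|\le\frac{\rho}{(1-\rho)^2}\,\|\delta_{n+1}(\lambda)\|,\qquad|\lambda|\le\rho.
\]

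The decisive step is then to choose $\rho$ small enough that $q:=\rho/(1-\rho)^2<1$, which holds for any $\rho<(3-\sqrt5)/2$. Iterating the inequality $N$ times and using the trivial bound $\|\delta_N(\lambda)\|\le\|\Theta_N(\lambda)\|+\|\Omega_N(\lambda)\|\le2$ gives $\|\delta_0(\lambda)\|\le 2q^N\to0$ as $N\to\infty$, so $\Theta(\lambda)=\Omega(\lambda)$ on the disk $|\lambda|\le\rho$; the identity theorem for holomorphic operator-valued functions then propagates the equality to all of $\dD$. I expect the main obstacle to be precisely the norm estimate feeding into this inequality: one must check that the two resolvent factors together with the extra factor $\lambda$ combine, after the simplification of the constant, into a genuine contraction for small $\lambda$, so that the influence of the unknown tail $\Theta_N$ is damped geometrically rather than merely propagated level by level.
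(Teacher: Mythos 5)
Your proof is correct and follows essentially the same route as the paper: both use the M\"obius-representation recursion between consecutive Schur iterates to derive the contraction inequality $\|\Theta_n(\lambda)-\Omega_n(\lambda)\|\le \frac{|\lambda|}{(1-|\lambda|)^2}\,\|\Theta_{n+1}(\lambda)-\Omega_{n+1}(\lambda)\|$, iterate it, exploit that the factor is less than one for $|\lambda|<(3-\sqrt{5})/2$, and conclude by analytic continuation to all of $\dD$. The only cosmetic differences are that you obtain the inequality from the resolvent identity (two terms whose constants collapse to $(1-\rho)^{-2}$) and use the trivial bound $\|\delta_N\|\le 2$, whereas the paper factors the difference as a single product of two resolvents and invokes Schwarz's lemma for an extra, inessential factor of $|\lambda|$.
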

\begin{proof} Let $\Gamma_0\in\bL(\sM,\sN),$ $\Gamma_n\in
\bL(\sD_{\Gamma_{n-1}},\sD_{\Gamma^*_{n-1}})$, $n\ge 1$ be a choice
sequence. Suppose the functions $\Theta_0(\lambda)$ and $\wh
\Theta_0(\lambda)$ from the Schur class ${\bf S}(\sM,\sN)$ have
$\{\Gamma_n\}_0^\infty$ as their Schur parameters. Then for every
$n=0,1,\ldots$ hold the relations
\[
\begin{array}{l}
\Theta_n(\lambda)=\Gamma_n+\lambda D_{\Gamma^*_n}(I+\lambda\Theta_{n+1}(\lambda)\Gamma^*_n)^{-1}\Theta_{n+1}(\lambda)D_{\Gamma_n},\\
\wh \Theta_n(\lambda)=\Gamma_n+\lambda D_{\Gamma^*_n}(I+\lambda\wh
\Theta_{n+1}(\lambda)\Gamma^*_n)^{-1}\wh\Theta_{n+1}(\lambda)D_{\Gamma_n},
\end{array}
\]
where $\{\Theta_n(\lambda)\}$ and $\{\wh\Theta_n(\lambda)\}$ are the
Schur iterates of $\Theta$ and $\wh \Theta$, respectively. Then one
has for every $n$ the equalities
\begin{equation}
\label{diff}
\begin{array}{l}
\Theta_n(\lambda)-\wh\Theta_n(\lambda)=\\
\quad=\lambda
D_{\Gamma^*_n}(I+\lambda\Theta_{n+1}(\lambda)\Gamma^*_n)^{-1}(\Theta_{n+1}(\lambda)-\wh
\Theta_{n+1}(\lambda))(I+\lambda\wh
\Theta_{n+1}(\lambda)\Gamma^*_n)^{-1}D_{\Gamma_n},\;\lambda\in\dD.
\end{array}
\end{equation}
Since $||\Theta_{n+1}(\lambda)-\wh\Theta_{n+1}(\lambda)||\le 2$ for
all $\lambda\in\dD$ and
$\Theta_{n+1}(0)=\wh\Theta_{n+1}(0)=\Gamma_{n+1}$, by Schwartz's
lemma we get
\[
||\Theta_{n+1}(\lambda)-\wh\Theta_{n+1}(\lambda)||\le
2\,|\lambda|,\; \lambda\in\dD.
\]
Further
\[
\begin{array}{l}
||(I+\lambda\Theta_{n+1}(\lambda)\Gamma^*_n)f||\ge(1-|\lambda|)||f||,\\
||(I+\lambda\wh\Theta_{n+1}(\lambda)\Gamma^*_n)f||\ge(1-|\lambda|)||f||
\end{array}
\]
for all $\lambda\in\dD$ and for all $f\in\sD_{\Gamma^*_{n-1}}$.
These relations imply
\[
||(I+\lambda\Theta_{n+1}(\lambda)\Gamma^*_n)^{-1}||\le\frac{1}{1-|\lambda|},\;
||(I+\lambda\wh\Theta_{n+1}(\lambda)\Gamma^*_n)^{-1}||\le\frac{1}{1-|\lambda|}
\]
for all $\lambda\in\dD$ and for all $n=0,1,\ldots.$
 Hence and from \eqref{diff} we have
\[
||\Theta_{n}(\lambda)-\wh\Theta_{n}(\lambda)||\le
2|\lambda|\,\frac{|\lambda|}{(1-|\lambda|)^2},\;\lambda\in\dD.
\]
Then applying \eqref{diff} for $\Theta_{n-1}$ and $\wh\Theta_{n-1}$
in the left hand side, we see that
\[
||\Theta_{n-1}(\lambda)-\wh\Theta_{n-1}(\lambda)||\le
2|\lambda|\,\left(\frac{|\lambda|}{(1-|\lambda|)^2}\right)^2,\;\lambda\in\dD,
\]
and finally
\begin{equation}
\label{prelimit} ||\Theta_{0}(\lambda)-\wh\Theta_{0}(\lambda)||\le
2|\lambda|\,\left(\frac{|\lambda|}{(1-|\lambda|)^2}\right)^{n+1},\;\lambda\in\dD
\end{equation}
for all $n=0,1,\ldots$.

Let $|\lambda|<(3-\sqrt{5})/2$. Then
\[
\frac{|\lambda|}{(1-|\lambda|)^2}<1.
\]
Letting $n\to\infty$ in \eqref{prelimit} we get
\[
\Theta_0(\lambda)=\wh\Theta_0(\lambda),\;|\lambda|<\frac{3-\sqrt{5}}{2}.
\]
Since $\Theta_0$ and $\wh\Theta_0$ are holomorphic in $\dD$, they
are equal on $\dD$.
\end{proof}

\section{Conservative discrete-time linear systems and their
transfer functions}  \label{secS}
%\subsection{Basic definitions}
 Let
$\sM,\sN$, and $\sH$ be separable Hilbert spaces. A linear system
$\tau=\left\{\begin{bmatrix} D&C \cr B&A\end{bmatrix};\sM,\sN,
\sH\right\}$ with bounded linear operators $A$, $B$, $C$, $D$  of
the form
\begin{equation}
\label{passive}
 \left\{
 \begin{array}{l}
    \sigma_k=Ch_k+D\xi_k,\\
    h_{k+1}=Ah_k+B\xi_k
\end{array}
\right. \qquad k\ge 0,
\end{equation}
where $\{\xi_k\}\subset \sM$, $\{\sigma_k\}\subset \sN$,
$\{h_k\}\subset \sH$ is called a \textit{discrete time-invariant
system}. The Hilbert spaces $\sM$ and $\sN$ are called the input and
the output spaces, respectively, and the Hilbert space $\sH$ is
called the state space. The operators $A$, $B$, $C$, and $D$ are
called the state space operator, the control operator, the
observation operator, and the feedthrough operator of $\tau$,
respectively. Put
\[
U_\tau=\begin{bmatrix} D&C \cr B&A\end{bmatrix} :
\begin{array}{l} \sM \\\oplus\\ \sH \end{array} \to
\begin{array}{l} \sN \\\oplus\\ \sH \end{array}
\]
If $U_\tau$ is contractive, then the corresponding discrete-time
system is said to be \textit{passive} \cite{A}. If the operator
 $U_\tau$ is isometric (respect., co-isometric, unitary), then the
system is said to be \textit{isometric} (respect.,
\textit{co-isometric}, \textit{conservative}). Isometric,
co-isometric, conservative, and passive discrete time-invariant
systems have been studied in \cite{BrR1}, \cite{BrR2}, \cite{Ando},
\cite{SF}, \cite{Helton1}, \cite{Helton2}, \cite{VilBr}, \cite{Br1},
\cite{Ball-Coehn}, \cite{ADRS}, \cite{A}, \cite{Arov},
\cite{ArKaaP}, \cite{ArNu1}, \cite{ArNu2}, \cite{ArSt},
\cite{Staf1}, \cite{Staf2}, \cite{AHS1}, \cite{ARL1},
\cite{Arlarxiv}, \cite{FKatsKr}. It is relevant to remark that a
brief history of System Theory  is presented in the recent preprint
of B.~Fritzsche, V.~Katsnelson, and B.~Kirstein \cite{FKatsKr}.

The subspaces
\begin{equation}
\label{CO} \sH^c:=\cspan\{A^{n}B\sM:\,n=0,1,\ldots\} \mbox{ and }
\sH^o:=\cspan\{A^{*n}C^*\sN:\,n=0,1,\ldots\}
\end{equation}
are said to be the \textit{controllable} and \textit{observable}
subspaces of the system $\tau$, respectively. The system $\tau$ is
said to be \textit{controllable} (respect., \textit{observable}) if
$\sH^c=\sH$ (respect., $\sH^o=\sH$), and it is called
\textit{minimal} if $\tau$ is both controllable and observable. The
system $\tau$ is said to be \textit{simple} if
\[
\sH=\clos
\{\sH^c+\sH^o\}=\cspan\{A^kB\sM,\;A^{*l}C^*\sN,\;k,l=0,1,\ldots\}
\]
It follows from \eqref{CO} that
\[%begin{equation}
%\label{eqco}
 (\sH^c)^\perp=\bigcap\limits_{n=0}^\infty\ker(B^*A^{*n}),\quad
 (\sH^o)^\perp=\bigcap\limits_{n=0}^\infty\ker(CA^{n}),
\]%end{equation}
and therefore there are the following alternative characterizations:
\begin{enumerate}\def\labelenumi{\rm (\alph{enumi})}
\item $\tau$ is controllable $\iff
\;\bigcap\limits_{n=0}^\infty\ker(B^*A^{*n})=\{0\}$;
\item  $\tau$ is observable $\iff
\;\bigcap\limits_{n=0}^\infty\ker(CA^{n})=\{0\}$;
\item  $\tau$ is simple $\iff
\left(\bigcap\limits_{n=0}^\infty\ker(B^*A^{*n})\right)\cap
\left(\bigcap\limits_{n=0}^\infty\ker(CA^{n})\right)=\{0\}.$
\end{enumerate}
 A contraction $A$ acting in a Hilbert space $\sH$ is
called \textit{completely non-unitary} \cite{SF} if there is no
nontrivial reducing subspace of $A$, on which $A$ generates a
unitary operator. Given a contraction $A$ in $\sH$ then there is a
canonical orthogonal decomposition \cite [Theorem I.3.2]{SF}
\[
\sH=\sH_0\oplus \sH_1, \qquad A=A_0\oplus A_1, \quad A_j=A\uphar
\sH_j, \quad j=0,1,
\]
where $\sH_0$ and $\sH_1$ reduce $A$, the operator $A_0$ is a
completely non-unitary contraction, and $A_1$ is a unitary operator.
Moreover,
\[
\sH_1= \left(\bigcap\limits_{n\ge 1}\ker
D_{A^n}\right)\bigcap\left(\bigcap\limits_{n\ge 1}\ker
D_{A^{*n}}\right).
\]
Since
\[
\bigcap\limits_{k=0}^{n-1} \ker(D_{A}A^{k})=\ker D_{A^{n}},\;
\bigcap\limits_{k=0}^{n-1} \ker(D_{A^*}A^{*k})=\ker D_{A^{*n}},
\]
we get
\begin{equation}
\label{perp}
\begin{split}
&\bigcap\limits_{n\ge 1}\ker
D_{A^n}=\sH\ominus\cspan\left\{A^{*n}D_A \sH,\;
n=0,1,\ldots\right\},\\
&\bigcap\limits_{n\ge 1}\ker
D_{A^{*n}}=\sH\ominus\cspan\left\{A^nD_{A^*} \sH,\;
n=0,1,\ldots\right\}.
\end{split}
\end{equation}
It follows that
\begin{equation}
\label{cu}\begin{array}{l}
 A \;\mbox{is completely
non-unitary}\;\iff\left(\bigcap\limits_{n\ge 1}\ker
D_{A^n}\right)\bigcap\left(\bigcap\limits_{n\ge 1}\ker
D_{A^{*n}}\right)=\{0\}\iff\\
 \iff \cspan\{A^{*n}D_{A},\;A^mD_{A^*},\;n,m\ge
0\}=\sH.
\end{array}
\end{equation}
If $\tau=\left\{\begin{bmatrix} D&C \cr B&A\end{bmatrix};\sM,\sN,
\sH\right\}$ is a conservative system then
 $\tau$ is simple if and only if the state space operator $A$ is a
completely non-unitary contraction \cite{Br1}, \cite{Ball-Coehn}.

The \textit{transfer function}
\[%begin{equation}
%\label{TrFu}
\Theta_\tau(\lambda):=D+\lambda C(I_{\sH}-\lambda
A)^{-1}B, \quad \lambda \in \dD,
\]%end{equation}
of the passive system $\tau$ belongs to the Schur class ${\bf
S}(\sM,\sN)$ \cite{A}. Conservative systems are also called the
unitary colligations  and their transfer functions are called the
characteristic functions \cite{Br1}.

The examples of conservative systems are given by
\[%begin{equation}
%\label{SIGMA}
\Sigma=\left\{\begin{bmatrix}-A& D_{A^*}\cr D_{A}& A^*\end{bmatrix};
 \sD_{A},\sD_{A^*},\sH\right\},\;\Sigma_*=\left\{\begin{bmatrix}-A^*&
D_{A}\cr D_{A^*}& A\end{bmatrix};  \sD_{A^*},\sD_{A},\sH\right\}.
\]%end{equation}
The transfer functions of these systems
\[
\Phi_\Sigma(\lambda)=\left(-A+\lambda D_{A^*}(I_\sH-\lambda
A^*)^{-1}D_{A}\right)\uphar\sD_{A}, \quad \lambda \in \dD
\]
and
\[
\Phi_{\Sigma_*}(\lambda)=\left(-A^*+\lambda D_{A}(I_\sH-\lambda
A)^{-1}D_{A^*}\right)\uphar\sD_{A^*}, \quad \lambda \in \dD
\]
are precisely the Sz.Nagy--Foias characteristic functions \cite{SF}
of $A$ and $A^*$, correspondingly.

It is well known that every operator-valued function
$\Theta(\lambda)$ from the Schur class ${\bf S}(\sM,\sN)$ can be
realized as the transfer function of some passive system, which can
be chosen as controllable isometric (respect., observable
co-isometric, simple conservative, minimal passive); cf.
\cite{BrR2}, \cite{SF}, \cite{Br1}, \cite{Ando} \cite{A},
\cite{ArKaaP}, \cite{ADRS}. Moreover, two controllable isometric
(respect., observable co-isometric, simple conservative) systems
with the same transfer function are \textit{unitarily equivalent}:
two discrete-time systems
\[
\tau_1=\left\{\begin{bmatrix} D&C_1 \cr
B_1&A_1\end{bmatrix};\sM,\sN,\sH_{1}\right\} \quad \mbox{and} \quad
\tau_2=\left\{\begin{bmatrix} D&C_2 \cr
B_2&A_2\end{bmatrix};\sM,\sN,\sH_{2}\right\}
\]
are said to be unitarily equivalent if there exists a unitary
operator $V$ from $\sH_{1}$ onto $\sH_{2}$ such that
\begin{equation}
\label{UNSYS}
\begin{array}{l}
A_1 =V^{-1}A_2V,\quad B_1=V^{-1}B_2,\quad C_1=C_2
V\iff\\
\iff\begin{bmatrix}I_\sN&0\cr 0&V \end{bmatrix}\begin{bmatrix} D&C_1
\cr B_1&A_1\end{bmatrix}= \begin{bmatrix} D&C_2 \cr
B_2&A_2\end{bmatrix}\begin{bmatrix}I_\sM&0\cr 0&V \end{bmatrix}
\end{array}
\end{equation}
cf. \cite{BrR1}, \cite{BrR2}, \cite{Ando}, \cite{Br1}, \cite{ADRS}.

\section{Conservative realizations of the Schur iterates}
\label{gener}
%\subsection{Contractions generated by a contraction}
Let $A$ be a completely non-unitary contraction in a separable
Hilbert space $\sH$. Suppose $\ker D_A\ne \{0\}$. Define the
subspaces and operators (see \cite{Arlarxiv})
\begin{equation}
\label{hnm} \left\{
\begin{array}{l}
\sH_{0,0}:=\sH\\
 \sH_{n,0}=\ker D_{A^n},\;
  \sH_{0,m}:=\ker
D_{A^{*m}},\\
\sH_{n, m}:=\ker D_{A^{n}}\cap \ker D_{A^{*m}},\; m,n\in\dN,
\end{array}
\right.
\end{equation}
\begin{equation}
\label{Anm}
 A_{n, m}:=P_{n,
m}A\uphar\sH_{n, m}\in\bL(\sH_{n,m}),
\end{equation}
where $P_{n,m}$ are the orthogonal projections in $\sH$ onto
$\sH_{n,m}$. The next results have been established in
\cite{Arlarxiv}.
\begin{theorem}
\label{RELATT}  \cite{Arlarxiv}.
\begin{enumerate}
\item Hold the relations
\[%begin{equation}
%\label{AKNM}%\left\{\begin{array}{l}
 \ker D_{A^k_{n,m}}=\sH_{n+k,m},\; \ker
 D_{A^{*k}_{n,m}}=\sH_{n,m+k},\;k=1,2,\ldots,
%\end{array},\right.
\]%end{equation}
\[%begin{equation}
%\label{REL2}
 \left\{\begin{array}{l}
\sD_{A_{n,m}}=\cran{(P_{n,m}D_{A^{n+1}})},\\
\sD_{A^*_{n,m}}=\cran{(P_{n,m}D_{A^{*{m+1}}})}
\end{array}\right.,
\]%end{equation}
\[%begin{equation}
%\label{REL}
\left\{ \begin{array}{l} A\sH_{n,m}=\sH_{n-1,
m+1},\;n\ge 1,\\
 A^*\sH_{n,m}=\sH_{n+1, m-1},\; m\ge 1
 \end{array}\right.,
\]%end{equation}
\begin{equation}
\label{UE1}
 \left(A_{n,m}\right)_{k,l}=A_{n+k,m+l}.
\end{equation}
\item
 The operators $\left\{A_{n,m}\right\}$ are completely non-unitary
contractions.
\item
 The operators
 \[
A_{n,0},\; A_{n-1,1},\; \ldots, A_{n-k,k},\ldots,A_{0,n}
\]
are unitarily equivalent and %begin{equation} \label{UE}
\[
A_{n-1,m+1}Af=AA_{n,m}f,\;f\in\sH_{n,m},\; n\ge 1.
\]%end{equation}
\end{enumerate}
\end{theorem}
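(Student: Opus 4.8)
The whole theorem rests on one geometric lemma that I would prove first: for $n\ge 1$ the operator $A$ maps $\sH_{n,m}$ isometrically \emph{onto} $\sH_{n-1,m+1}$, and dually $A^*$ maps $\sH_{n,m}$ onto $\sH_{n+1,m-1}$ for $m\ge 1$. The starting observations are that $\ker D_{A^n}=\{f:\|A^nf\|=\|f\|\}$, that the identity $\|D_{A^n}f\|^2=\sum_{k=0}^{n-1}\|D_AA^kf\|^2$ makes the spaces $\ker D_{A^n}$ decrease in $n$, and that $A^*Af=f$ whenever $f\in\ker D_A$. For $f\in\sH_{n,m}$ one then gets $Af\in\ker D_{A^{n-1}}$ directly, while $\|A^{*(m+1)}Af\|=\|A^{*m}A^*Af\|=\|A^{*m}f\|=\|f\|=\|Af\|$ shows $Af\in\ker D_{A^{*(m+1)}}$; surjectivity follows by checking that $A^*g\in\sH_{n,m}$ and $A(A^*g)=g$ for $g\in\sH_{n-1,m+1}$ (here $g\in\ker D_{A^*}$). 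This lemma is short but load-bearing: almost everything below reduces to it.

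Second, I would establish $\ker D_{A^k_{n,m}}=\sH_{n+k,m}$ by induction on $k$. Writing $B=A_{n,m}=P_{n,m}A\uphar\sH_{n,m}$, the inequality $\|Bf\|\le\|Af\|\le\|f\|$ shows that $\|Bf\|=\|f\|$ forces both $\|Af\|=\|f\|$ and $Af\in\sH_{n,m}$; with the mapping lemma this gives the base case $\ker D_B=\sH_{n+1,m}$, and $Bf=Af$ on $\ker D_B$. The inductive step uses $\ker D_{B^{k+1}}=\{f\in\ker D_B:\ Bf\in\ker D_{B^k}\}$ together with the bookkeeping $\|A^{n+k+1}f\|=\|A^{n+k}(Af)\|=\|Af\|=\|f\|$. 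The dual statement $\ker D_{A^{*k}_{n,m}}=\sH_{n,m+k}$ is the same computation applied to $A^*$, since $A^*_{n,m}=P_{n,m}A^*\uphar\sH_{n,m}$ and $\sH_{n,m}$ is symmetric under $A\leftrightarrow A^*$, $n\leftrightarrow m$. For the defect subspaces I would use $\sD_{A_{n,m}}=\sH_{n,m}\ominus\ker D_{A_{n,m}}=\sH_{n,m}\ominus\sH_{n+1,m}$ and the elementary fact $\cL\ominus(\cL\cap\cM)=\overline{P_{\cL}(\cM^{\perp})}$ with $\cL=\sH_{n,m}$, $\cM=\ker D_{A^{n+1}}$; as $\cM^{\perp}=\cran D_{A^{n+1}}$ and $\sH_{n+1,m}=\sH_{n,m}\cap\ker D_{A^{n+1}}$, this yields $\sD_{A_{n,m}}=\cran(P_{n,m}D_{A^{n+1}})$.

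The self-similarity $(A_{n,m})_{k,l}=A_{n+k,m+l}$ is then read off from the kernel formulas, since
\[
\ker D_{A^k_{n,m}}\cap\ker D_{A^{*l}_{n,m}}=\sH_{n+k,m}\cap\sH_{n,m+l}=\sH_{n+k,m+l}\subseteq\sH_{n,m},
\]
so the projections satisfy $P_{n+k,m+l}P_{n,m}=P_{n+k,m+l}$ and $(A_{n,m})_{k,l}f=P_{n+k,m+l}P_{n,m}Af=A_{n+k,m+l}f$ on $\sH_{n+k,m+l}$. Complete non-unitarity of each $A_{n,m}$ follows from the characterization \eqref{cu}: using $\bigcap_{k\ge1}\ker D_{A^k_{n,m}}=(\bigcap_{j\ge1}\ker D_{A^j})\cap\ker D_{A^{*m}}$ and $\bigcap_{l\ge1}\ker D_{A^{*l}_{n,m}}=\ker D_{A^n}\cap(\bigcap_{l\ge1}\ker D_{A^{*l}})$ one gets
\[
\Bigl(\bigcap_{k\ge1}\ker D_{A^k_{n,m}}\Bigr)\cap\Bigl(\bigcap_{l\ge1}\ker D_{A^{*l}_{n,m}}\Bigr)\subseteq\Bigl(\bigcap_{j\ge1}\ker D_{A^j}\Bigr)\cap\Bigl(\bigcap_{l\ge1}\ker D_{A^{*l}}\Bigr)=\{0\}.
\]

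For part (3) the crux is the projection identity
\[
AP_{n,m}g=P_{n-1,m+1}Ag,\qquad g\in\sH_{n-1,m+1},\ n\ge1,
\]
which I would prove by splitting $g=u+w$ with $u=P_{n,m}g\in\sH_{n,m}$ and $w\perp\sH_{n,m}$: since $Au\in\sH_{n-1,m+1}$ by the mapping lemma, it suffices that $Aw\perp\sH_{n-1,m+1}$, and for $\psi\in\sH_{n-1,m+1}$ one has $\langle Aw,\psi\rangle=\langle w,A^*\psi\rangle=0$ because $A^*\psi\in A^*\sH_{n-1,m+1}=\sH_{n,m}$ (the dual mapping lemma) while $w\perp\sH_{n,m}$. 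Taking $g=Af$ with $f\in\sH_{n,m}$, this identity becomes exactly $A_{n-1,m+1}Af=AA_{n,m}f$. Finally, letting $V_{n,m}:=A\uphar\sH_{n,m}$, which is unitary onto $\sH_{n-1,m+1}$ by the first lemma, the relation reads $V_{n,m}A_{n,m}=A_{n-1,m+1}V_{n,m}$, so $A_{n,m}$ and $A_{n-1,m+1}$ are unitarily equivalent; chaining this for $k=0,1,\dots,n-1$ gives the unitary equivalence of $A_{n,0},A_{n-1,1},\dots,A_{0,n}$. I expect this projection identity to be the only genuinely delicate point, the decisive input being $A^*\sH_{n-1,m+1}=\sH_{n,m}$.
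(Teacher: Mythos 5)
Your proof is correct, but there is nothing in this paper to compare it against: Theorem \ref{RELATT} is stated here without proof and imported from the author's preprint \cite{Arlarxiv} (the text preceding it says the results were established there). So I can only judge the argument on its own merits, and it holds up at every step. The mapping lemma is sound: for $f\in\sH_{n,m}$, $n\ge 1$, one has $f\in\ker D_A$, hence $\|Af\|=\|f\|$, $\|A^{n-1}(Af)\|=\|A^nf\|=\|f\|$, and, via $A^*Af=f$, $\|A^{*(m+1)}(Af)\|=\|A^{*m}f\|=\|f\|$; surjectivity onto $\sH_{n-1,m+1}$ through $g\mapsto A^*g$ uses only $g\in\ker D_{A^*}$ and the same kind of norm bookkeeping. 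Your base case $\ker D_{A_{n,m}}=\sH_{n+1,m}$ correctly exploits that $\|P_{n,m}Af\|=\|f\|$ forces both $\|Af\|=\|f\|$ and $Af\in\sH_{n,m}$ (and that $A_{n,m}f=Af$ there, which the induction needs); the inductive step, the identity $\cL\ominus(\cL\cap\cM)=\overline{P_{\cL}(\cM^{\perp})}$ with $\cM^{\perp}=\cran D_{A^{n+1}}$ giving the defect-subspace formulas, the computation $\sH_{n+k,m}\cap\sH_{n,m+l}=\sH_{n+k,m+l}$ giving \eqref{UE1}, and the reduction of complete non-unitarity to the criterion \eqref{cu} are all valid. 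The projection identity $AP_{n,m}g=P_{n-1,m+1}Ag$, which you correctly isolate as the delicate point of part (3), is proved exactly by the duality you invoke ($A^*\sH_{n-1,m+1}=\sH_{n,m}$, so $A$ maps the orthogonal complement of $\sH_{n,m}$ into the orthogonal complement of $\sH_{n-1,m+1}$), and it yields the intertwining $A_{n-1,m+1}V_{n,m}=V_{n,m}A_{n,m}$ with $V_{n,m}=A\uphar\sH_{n,m}$ unitary onto $\sH_{n-1,m+1}$, whence the chain of unitary equivalences. A pleasant feature of your argument is that it is self-contained within the tools this paper already records, namely the identity $\bigcap_{k=0}^{n-1}\ker(D_AA^{k})=\ker D_{A^{n}}$ and the characterization \eqref{cu} of complete non-unitarity, so it could serve as an in-paper proof where the author instead gives a citation.
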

The relation \eqref{UE1} yields the following picture for the
creation of the operators $A_{n,m}$:

\xymatrix{
&&&A\ar[ld]\ar[rd]\\
&&A_{1,0}\ar[ld]\ar[rd]&&A_{0,1}\ar[ld]\ar[rd]\\
&A_{2,0}\ar[ld]\ar[rd]&&A_{1,1}\ar[ld]\ar[rd]&&A_{0,2}\ar[ld]\ar[rd]\\
A_{3,0}&&
A_{2,1}&&A_{1,2}&&A_{0,3}\\
\cdots&\cdots&\cdots&\cdots&\cdots&\cdots&\cdots}
% \xymatrix{
%&&&A\ar[ld]\ar[rd]\\
%&&A_{1,0}\ar[ld]\ar[rd]&&A_{0,1}\ar[ld]\ar[rd]\\
%&A_{2,0}\ar[ld]\ar[rd]&&A_{1,1}\ar[ld]\ar[rd]&&A_{0,2}\ar[ld]\ar[rd]\\
%A_{3,0}&&
%A_{2,1}&&A_{1,2}&&A_{0,3}\\
%\cdots&\cdots&\cdots&\cdots&\cdots&\cdots&\cdots}

The process terminates on the $N$-th step if and only if
\[
\begin{array}{l}
\ker D_{A^N}=\{0\}\iff \ker D_{A^{N-1}}\cap\ker
D_{A^*}=\{0\}\iff\ldots\\ \ker D_{A^{N-k}}\cap\ker
D_{A^{*k}}=\{0\}\iff\ldots \ker D_{A^{*N}}=\{0\}.
\end{array}
\]
\begin{theorem}
\label{ITERATES} \cite{Arlarxiv}. Let $A$ be a completely
non-unitary contraction in a separable Hilbert space $\sH$. Assume
$\ker D_A\ne\{0\}$ and let the contractions $A_{n,m}$ be defined by
\eqref{hnm} and \eqref{Anm}. Then the characteristic functions of
the operators
\[
A_{n,0},A_{n-1, 1},\ldots, A_{n-m, m},\ldots A_{1,n-1}, A_{0, n}
\]
coincide with the pure part of the $n$-th Schur iterate of the
characteristic function $\Phi(\lambda)$ of $A$. Moreover, each
operator from the set $\{A_{n-k,k}\}_{k=0}^n$ is
\begin{enumerate}
\item  a unilateral shift (respect., co-shift) if and only if the $n$-th Schur
parameter $\Gamma_{n}$ of $\Phi$ is isometric (respect.,
co-isometric),
\item  the orthogonal sum of  a unilateral shift and
co-shift if and only if
\begin{equation}
\label{SHCOSH} \sD_{\Gamma_{n-1}}\ne \{0\},\;
\sD_{\Gamma^*_{n-1}}\ne \{0\}\quad\mbox{and}\quad
\Gamma_m=0\quad\mbox{for all}\quad m\ge n.
\end{equation}
\end{enumerate}
 Each subspace from the set $\{\sH_{n-k,k}\}_{k=0}^n$ is trivial
  if and only if $\Gamma_n$ is unitary.
\end{theorem}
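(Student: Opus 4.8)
The plan is to induct on $n$, reducing the whole statement to a single one–step lemma and then iterating it by means of Theorem~\ref{RELATT}. By Theorem~\ref{RELATT}(3) the operators $A_{n,0},A_{n-1,1},\ldots,A_{0,n}$ are mutually unitarily equivalent, hence have a common characteristic function, so it is enough to treat $A_{n,0}$; and by the composition rule \eqref{UE1} one has $A_{n,0}=\bigl(A_{n-1,0}\bigr)_{1,0}$, i.e. $A_{n,0}$ is the compression of the completely non-unitary contraction $A_{n-1,0}$ (Theorem~\ref{RELATT}(2)) to $\ker D_{A_{n-1,0}}=\sH_{n,0}$ (Theorem~\ref{RELATT}(1)). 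Thus everything follows from the case $n=1$, in the form: \emph{for every completely non-unitary contraction $B$ with $\ker D_B\ne\{0\}$, the characteristic function of $B_{1,0}=P_{\ker D_B}B\uphar\ker D_B$ coincides, in the sense of \eqref{COINC}, with the pure part of the first Schur iterate of the characteristic function of $B$.} Granting this, the induction closes: the characteristic function of $A_{n,0}$ coincides with the pure part of the first Schur iterate of the characteristic function of $A_{n-1,0}$, which by the inductive hypothesis coincides with the pure part of $\Theta_{n-1}$; since coincidence is transported through the Schur algorithm by \eqref{COINC1}, and since passing to the pure part alters only $\Gamma_0$ by a unitary constant and leaves $\sD_{\Gamma_0}$, hence all iterates of order $\ge 1$, unchanged, the first Schur iterate of the pure part of $\Theta_{n-1}$ is $\Theta_n$, and its pure part is what we need. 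The base $n=0$ is immediate: a short computation gives $\Phi(0)=-A\uphar\sD_A$, $\Phi^*(0)=-A^*\uphar\sD_{A^*}$, whence $D_{\Phi(0)}=D_A\uphar\sD_A$ and $D_{\Phi^*(0)}=D_{A^*}\uphar\sD_{A^*}$; as $\sD_A\cap\ker D_A=\{0\}$, the operator $D_{\Phi(0)}$ is injective on $\sD_A$, so $\Phi$ is purely contractive and coincides with $\Theta_0$.

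The heart of the proof is the case $n=1$. Let $\Psi$ be the characteristic function of $B$ and $\Gamma_0=\Psi(0)$. Combining the defining formula of $\Psi$ with the M\"obius representation of Theorem~\ref{MO} and the identities $D_{\Psi(0)}=D_B\uphar\sD_B$, $D_{\Psi^*(0)}=D_{B^*}\uphar\sD_{B^*}$ derived as above, the first Schur iterate $\Psi_1$ is singled out by
\[
\Psi(\lambda)-\Gamma_0=\lambda\,D_{B^*}(I-\lambda B^*)^{-1}D_B\uphar\sD_B
=D_{B^*}\,\lambda\Psi_1(\lambda)\bigl(I+\lambda\Psi^*(0)\Psi_1(\lambda)\bigr)^{-1}D_B\uphar\sD_B .
\]
What remains is to show that $\Psi_1$, written here through the resolvent of $B^*$ on all of $\sH$, coincides after passage to its pure part with the transfer function $\bigl(-B_{1,0}+\lambda D_{B_{1,0}^*}(I-\lambda B_{1,0}^*)^{-1}D_{B_{1,0}}\bigr)\uphar\sD_{B_{1,0}}$ of the canonical simple conservative colligation of $B_{1,0}$. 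To this end I would match the input and output spaces using the descriptions $\sD_{B_{1,0}}=\cran(P_{\ker D_B}D_{B^2})$ and $\sD_{B_{1,0}^*}=\cran(P_{\ker D_B}D_{B^*})$ from Theorem~\ref{RELATT}(1) together with Proposition~\ref{ranges}, and then perform the Schur--complement reduction of $(I-\lambda B^*)^{-1}$ onto $\ker D_B=\sH_{1,0}$, exploiting the isometry of $B$ on $\ker D_B$ and the mapping property $B\sH_{1,0}=\sH_{0,1}$ to collapse the cross terms and produce exactly the resolvent of $B_{1,0}^*$.

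The ``moreover'' assertions then follow from the established coincidence and the Sz.-Nagy--Foias dictionary between characteristic functions and operator types. If $\Gamma_n$ is isometric then $\sD_{\Gamma_n}=\{0\}$ and the pure part of $\Theta_n$ is the null function $0\in{\bf S}(\{0\},\sD_{\Gamma_n^*})$, which is the characteristic function of a unilateral shift; since the characteristic function determines a completely non-unitary contraction up to unitary equivalence, each $A_{n-k,k}$ is a unilateral shift, and dually for co-isometric $\Gamma_n$. Condition \eqref{SHCOSH} is equivalent, via the relation $\Theta_n=\lambda\Theta_{n+1}$ valid when $\Gamma_n=0$, to $\Theta_n\equiv 0$ as a function in ${\bf S}(\sD_{\Gamma_{n-1}},\sD_{\Gamma_{n-1}^*})$ with both spaces nonzero; its pure part is the null function between these nonzero spaces, whose completely non-unitary model is precisely the orthogonal sum of a unilateral shift and a co-shift. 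Finally, $\Gamma_n$ is unitary iff $\Theta_n$ is a unitary constant, iff its pure part acts between zero spaces, iff $A_{n-k,k}$ lives on $\{0\}$, i.e. iff $\sH_{n-k,k}=\{0\}$, by Theorem~\ref{RELATT}(1).

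I expect the main obstacle to be the $n=1$ identification of the second paragraph, namely converting the global resolvent expression for $\Psi_1$ into the characteristic function of the compression $B_{1,0}$. This is a genuine Schur--complement computation on the state space, whose delicate points are the identification of the defect subspaces $\sD_{B_{1,0}},\sD_{B_{1,0}^*}$ with the spaces on which $\Psi_1$ naturally acts and the verification that the ranges involved close up correctly; Proposition~\ref{ranges} and the range identities of Theorem~\ref{RELATT}(1) are precisely the tools tailored to this step, and keeping careful track of the pure part, so as to discard any unitary-constant summand arising on the reduced space, is what makes the conclusion an equality of \emph{pure parts} rather than of the iterates themselves.
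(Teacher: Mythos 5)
Your reduction scheme is sound as far as it goes: Theorem \ref{RELATT}(3) lets you work with $A_{n,0}$ alone, \eqref{UE1} gives $A_{n,0}=(A_{n-1,0})_{1,0}$, your observation that passing to the pure part leaves all Schur iterates of order $\ge 1$ unchanged is correct (the unitary summand has trivial M\"obius parameter), and the ``moreover'' claims do follow from the coincidence statement by the standard dictionary between characteristic functions and shifts, co-shifts, and their sums. The genuine gap is that your one-step lemma --- that the characteristic function of $B_{1,0}=P_{\ker D_B}B\uphar\ker D_B$ coincides with the pure part of the first Schur iterate of the characteristic function of $B$ --- is never proved; it is only announced together with a list of tools. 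That lemma \emph{is} the theorem for $n=1$, and it carries all of the analytic content. The part you outline is the easy half: the M\"obius representation of Theorem \ref{MO} can indeed be pushed to the closed formula $\Psi_1(\lambda)=P_{\sD_{B^*}}(I_\sH-\lambda B^*P_{\ker D_{B^*}})^{-1}\uphar\sD_B$. What is missing is everything after that: identifying $\sD_{B_{1,0}}=\cran (P_{\ker D_B}D_{B^2})$ and $\sD_{B_{1,0}^*}=\cran (P_{\ker D_B}D_{B^*})$ as the spaces carrying the pure part of $\Psi_1$, exhibiting the two coincidence unitaries of \eqref{COINC}, and verifying that, after splitting off a unitary constant summand, the expression above equals $(-B_{1,0}+\lambda D_{B_{1,0}^*}(I-\lambda B_{1,0}^*)^{-1}D_{B_{1,0}})\uphar\sD_{B_{1,0}}$. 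Note that your formula involves $B^*P_{\ker D_{B^*}}$, not $B_{1,0}$, so even the operator being compressed is not yet the right one; the passage between the two is exactly the nontrivial step. A proof whose only substantive step is deferred to a computation you ``would perform'' is a plan, not a proof.

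Two further points. First, your induction applies the lemma to $B=A_{n-1,0}$ and therefore needs $\ker D_{A_{n-1,0}}=\sH_{n,0}\ne\{0\}$; but your closing equivalence ``$\Gamma_n$ unitary $\iff \sH_{n-k,k}=\{0\}$'' invokes the level-$n$ coincidence precisely in the degenerate case where the induction never reaches level $n$, so that case needs its own direct argument. Second, for comparison: this paper does not prove the theorem at all --- it quotes it from \cite{Arlarxiv} --- and the machinery it surveys indicates the intended route, which is systems-theoretic rather than function-theoretic: realize $\Phi$ as the transfer function of the canonical simple conservative system $\Sigma$ with state operator $A^*$, apply the transformations $\Omega_{1,0},\Omega_{0,1}$ of \eqref{RealFirst} (equivalently, Theorem \ref{ITERATES11}) to realize each Schur iterate $\Theta_n$ by a simple conservative system with state operator $(A_{k,n-k})^*$, and then use the fact, invoked in the proof of Theorem \ref{MatrMod2}, that the pure part of the transfer function of a simple conservative system coincides with the characteristic function of the adjoint of its state operator. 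Your one-step lemma is equivalent to the $\Omega$-transformation statement for the canonical colligation, so completing your resolvent computation would amount to re-proving \eqref{RealFirst} in that special case; that is a legitimate alternative, but it has to be carried out.
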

\begin{theorem}
\label{ITERATES11} \cite{Arlarxiv}. Let $\Theta(\lambda)\in {\bf
S}(\sM,\sN)$ and let
\[
\tau_0=\left\{\begin{bmatrix}\Gamma_0&C\cr
B&A\end{bmatrix};\sM,\sN,\sH\right\}
\]
be a simple conservative realization of $\Theta$. Then the Schur
parameters $\{\Gamma_n\}_{n \ge 1}$ of $\Theta$ can be calculated as
follows
\begin{equation}
\label{gamman}
\begin{array}{l}
\Gamma_1=D^{-1}_{\Gamma^*_0}C\left(D^{-1}_{\Gamma_0}B^*\right)^*,�\;
\Gamma_2=D^{-1}_{\Gamma^*_1}D^{-1}_{\Gamma^*_0}CA
\left(D^{-1}_{\Gamma_1}D^{-1}_{\Gamma_0}\left(B^*\uphar\sH_{1,0}\right)\right)^*,\ldots,\\
\Gamma_n=D^{-1}_{\Gamma^*_{n-1}}\cdots
D^{-1}_{\Gamma^*_0}CA^{n-1}\left(D^{-1}_{\Gamma_{n-1}}\cdots
D^{-1}_{\Gamma_0}\left(B^*\uphar\sH_{n-1,0}\right)\right)^*,\ldots.
\end{array}
\end{equation}
Here the operators $D^{-1}_{\Gamma_k}$ and $D^{-1}_{\Gamma^*_k},
k=0,1,\dots$ are the Moore-Penrose pseudo inverses, the operator
\[
\left(D^{-1}_{\Gamma_{n-1}}\cdots
D^{-1}_{\Gamma_0}\left(B^*\uphar\sH_{n-1,0}\right)\right)^*\in\bL(\sD_{\Gamma_{n-1}},\sH_{n-1,0})
\]
is  the adjoint to the operator
\[
D^{-1}_{\Gamma_{n-1}}\cdots
D^{-1}_{\Gamma_0}\left(B^*\uphar\sH_{n-1,0}\right)\in\bL(\sH_{n-1,0},\sD_{\Gamma_{n-1}}),
\]
and
\[\begin{array}{l}
\ran\left(D^{-1}_{\Gamma_{n-1}}\cdots
D^{-1}_{\Gamma_0}\left(B^*\uphar\sH_{n,0}\right) \right)\subset\ran
D_{\Gamma_n},\\
\ran\left(D^{-1}_{\Gamma^*_{n-1}}\cdots
D^{-1}_{\Gamma^*_0}\left(C\uphar\sH_{0,n}\right)\right)\subset\ran
D_{\Gamma^*_n}
\end{array}
\]
for every $n\ge 1$.
 Moreover, for each $n\ge 1$ the unitarily equivalent simple conservative systems
\begin{equation}
\label{taun}
\begin{array}{l}
\tau^{(k)}_{n}=\left\{\begin{bmatrix}\Gamma_n&D^{-1}_{\Gamma^*_{n-1}}
\cdots D^{-1}_{\Gamma^*_{0}}(CA^{n-k})\cr
A^k\left(D^{-1}_{\Gamma_{n-1}}\cdots D^{-1}_{\Gamma_{0}}
\left(B^*\uphar\sH_{n,0}\right)\right)^*&A_{n-k,k}\end{bmatrix};
\sD_{\Gamma_{n-1}},\sD_{\Gamma^*_{n-1}}, \sH_{n-k,k}\right\},\\
k=0,1,\ldots,n \end{array}
\end{equation}
are realizations of the $n$-th Schur iterate $\Theta_n$ of $\Theta$.
 Here the operator
\[
B_n=\left(D^{-1}_{\Gamma_{n-1}}\cdots D^{-1}_{\Gamma_{0}}
\left(B^*\uphar\sH_{n,0}\right)\right)^*\in\bL(\sD_{\Gamma_{n-1}},\sH_{n,0})
\]
is the adjoint to the operator
\[
D^{-1}_{\Gamma_{n-1}}\cdots D^{-1}_{\Gamma_{0}}
\left(B^*\uphar\sH_{n,0}\right)\in\bL(\sH_{n,0},\sD_{\Gamma_{n-1}}).
\]
\end{theorem}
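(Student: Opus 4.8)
The plan is to isolate a single ``first iterate'' step and then iterate it, using the compositional law $(A_{n,m})_{k,l}=A_{n+k,m+l}$ of Theorem~\ref{RELATT}. Conservativity of $\tau_0$ means the block matrix $\begin{bmatrix}\Gamma_0&C\cr B&A\end{bmatrix}$ is unitary, which gives
\[
B^*B=D^2_{\Gamma_0},\qquad CC^*=D^2_{\Gamma^*_0},\qquad C^*C=D^2_A,\qquad B\Gamma^*_0=-AC^*.
\]
In particular $\ker C=\ker D_A=\sH_{1,0}$ and $\cran C=\sD_{\Gamma^*_0}$, so the compression $A_{1,0}=P_{\sH_{1,0}}A\uphar\sH_{1,0}$ acts on $\ker D_A$ and is completely non-unitary by Theorem~\ref{RELATT}.

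First I would treat $n=1$. Substituting $\Theta(\lambda)-\Gamma_0=\lambda C(I-\lambda A)^{-1}B$ into the Schur-iterate formula
\[
\Theta_1(\lambda)=\frac1\lambda\,D_{\Gamma^*_0}\bl(I-\Theta(\lambda)\Gamma^*_0\br)^{-1}(\Theta(\lambda)-\Gamma_0)D^{-1}_{\Gamma_0}
\]
and combining $B\Gamma^*_0=-AC^*$ with the resolvent identity $\lambda(I-\lambda A)^{-1}A=(I-\lambda A)^{-1}-I$, the inner factor collapses to
\[
I-\Theta(\lambda)\Gamma^*_0=C(I-\lambda A)^{-1}C^*.
\]
This is the decisive simplification. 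Evaluating at $\lambda=0$ yields $\Gamma_1=\Theta_1(0)=D^{-1}_{\Gamma^*_0}CBD^{-1}_{\Gamma_0}\uphar\sD_{\Gamma_0}$, which is the first line of \eqref{gamman}. For the realization itself I would take the colligation $\tau^{(0)}_1$ prescribed by \eqref{taun} (state space $\sH_{1,0}$, state operator $A_{1,0}$, observation $D^{-1}_{\Gamma^*_0}CA\uphar\sH_{1,0}$, control $\bl(D^{-1}_{\Gamma_0}(B^*\uphar\sH_{1,0})\br)^*$), verify that its block matrix is unitary directly from the relations above, and check that its transfer function coincides with the expression $D_{\Gamma^*_0}\bl(C(I-\lambda A)^{-1}C^*\br)^{-1}C(I-\lambda A)^{-1}BD^{-1}_{\Gamma_0}$ just found for $\Theta_1$.

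The general case follows by induction. Applying the $n=1$ construction to the simple conservative realization $\tau^{(0)}_{n-1}$ of $\Theta_{n-1}$ (simple because its state operator $A_{n-1,0}$ is completely non-unitary, cf. Section~\ref{secS}) produces a realization of $\Theta_n=(\Theta_{n-1})_1$ whose state operator is $(A_{n-1,0})_{1,0}=A_{n,0}$ by \eqref{UE1}; the base relation $\Gamma_n=D^{-1}_{\Gamma^*_{n-1}}C_{n-1}B_{n-1}D^{-1}_{\Gamma_{n-1}}$ then telescopes into the full formula \eqref{gamman} once the accumulated factors $D^{-1}_{\Gamma_k},D^{-1}_{\Gamma^*_k}$ and powers of $A$ are collected. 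Finally, since $A$ is isometric on $\ker D_A\supset\sH_{n,m}$, the map $A\uphar\sH_{n,m}$ is a unitary of $\sH_{n,m}$ onto $\sH_{n-1,m+1}$ intertwining $A_{n,m}$ and $A_{n-1,m+1}$ (this is the content of $A\sH_{n,m}=\sH_{n-1,m+1}$ and $A_{n-1,m+1}Af=AA_{n,m}f$ in Theorem~\ref{RELATT}); transporting $\tau^{(0)}_n$ by the powers $A^k$ produces the unitarily equivalent realizations $\tau^{(k)}_n$ with observation operator $CA^{n-k}$, which proves the last assertion.

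The main obstacle is not the algebra of the collapse identity but the bookkeeping of ranges. The operator $C(I-\lambda A)^{-1}C^*$ is invertible only on $\sD_{\Gamma^*_0}$, and the defect operators need not be boundedly invertible, so every Moore--Penrose pseudoinverse in \eqref{gamman} and in $\tau^{(k)}_n$ must be paired with the correct range before it can be cancelled. Establishing the inclusions $\ran\bl(D^{-1}_{\Gamma_{n-1}}\cdots D^{-1}_{\Gamma_0}(B^*\uphar\sH_{n,0})\br)\subset\ran D_{\Gamma_n}$ (and its adjoint counterpart) inductively, via Proposition~\ref{ranges}, the identities \eqref{defect}, and the description $\sD_{A_{n,0}}=\cran(P_{n,0}D_{A^{n+1}})$ from Theorem~\ref{RELATT}, is where the real work lies.
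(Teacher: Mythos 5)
Your architecture is the right one, and it is essentially the one this paper itself relies on: note that the paper does not prove Theorem \ref{ITERATES11} at all (it is imported from \cite{Arlarxiv}), but Section \ref{gener} introduces exactly your single-step construction via the transformations $\Omega_{0,1}(\tau)$, $\Omega_{1,0}(\tau)$ of \eqref{RealFirst}, and the proof of Theorem \ref{CR} iterates that step just as you propose. Within that frame, several of your ingredients check out: the unitarity relations $B^*B=D^2_{\Gamma_0}$, $CC^*=D^2_{\Gamma^*_0}$, $C^*C=D^2_A$, $B\Gamma^*_0=-AC^*$ are correct; the collapse identity $I-\Theta(\lambda)\Gamma^*_0=C(I_\sH-\lambda A)^{-1}C^*$ is correct and is a genuinely clean way to obtain $\Gamma_1=D^{-1}_{\Gamma^*_0}CBD^{-1}_{\Gamma_0}$; and the transport of $\tau^{(0)}_n$ by the unitaries $A^k\uphar\sH_{n,0}:\sH_{n,0}\to\sH_{n-k,k}$ (using $A\sH_{n,m}=\sH_{n-1,m+1}$ and $A_{n-1,m+1}Af=AA_{n,m}f$ from Theorem \ref{RELATT}) correctly produces the family $\tau^{(k)}_n$ with observation operators $CA^{n-k}$.

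The genuine gap is that the load-bearing step — the $n=1$ case — is asserted, not proved. You write that one should ``verify that its block matrix is unitary directly from the relations above, and check that its transfer function coincides with'' the expression found for $\Theta_1$; neither verification is routine, and together they \emph{are} the theorem, since the induction and the $A^k$-transport are easy once they are in place. Unitarity of $\tau^{(0)}_1$ already requires the range bookkeeping you postpone: one must know that $\bl(D^{-1}_{\Gamma_0}(B^*\uphar\sH_{1,0})\br)^*$ is a bounded operator into $\sH_{1,0}$ and that $D^{-1}_{\Gamma^*_0}CA\uphar\sH_{1,0}$ makes sense, which forces Douglas-type factorizations $B=D_{A^*}V$, $C=D_{\Gamma^*_0}W^*$ before any column/row relations can be checked. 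More seriously, the transfer function of $\tau^{(0)}_1$ involves $(I_{\sH_{1,0}}-\lambda A_{1,0})^{-1}$, and $A_{1,0}=P_{1,0}A\uphar\sH_{1,0}$ is a \emph{compression}: its resolvent is not the restriction of $(I_\sH-\lambda A)^{-1}$, so there is no direct substitution matching $\Gamma_1+\lambda C_1(I-\lambda A_{1,0})^{-1}B_1$ with $D_{\Gamma^*_0}\bl(C(I_\sH-\lambda A)^{-1}C^*\br)^{-1}C(I_\sH-\lambda A)^{-1}BD^{-1}_{\Gamma_0}$; an actual argument relating the compressed resolvent to the full system data is required (this is precisely the computation carried out in \cite{Arlarxiv}). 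As it stands, your proposal is a correct reduction of the theorem to its own essential content, plus a nice identity that computes $\Theta_1$ as a function — but not a proof that the colligation $\tau^{(0)}_1$ is conservative, simple, and realizes that function.
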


Note that if

\noindent 1) $\Gamma_m$ is isometric then  $\sD_{\Gamma_n}=0$,
$\Gamma^*_n=0\in\bL(\sD_{\Gamma^*_m},\{0\})$,
$\sD_{\Gamma^*_n}=\sD_{\Gamma^*_m}$, and $\sH_{0,n}=\sH_{0,m}$ for
$n\ge m$. The unitarily equivalent observable conservative systems
\[
\tau^{(k)}_m=\left\{\begin{bmatrix}\Gamma_m&D^{-1}_{\Gamma^*_{m-1}}
\cdots D^{-1}_{\Gamma^*_{0}}(CA^{m-k})\cr 0&A_{m-k,k}\end{bmatrix};
\sD_{\Gamma_{m-1}},\sD_{\Gamma^*_{m-1}}, \sH_{m-k,k}\right\},\;
k=0,1,\ldots,m
\]
have transfer functions $\Theta_m(\lambda)=\Gamma_m$ and the
operators $A_{m-k,k}$ are unitarily equivalent co-shifts of
multiplicity $\dim\sD_{\Gamma^*_m},$ the Schur iterates $\Theta_n$
are null operators from $\bL(\{0\},\sD_{\Gamma^*_m})$ for $n\ge m+1$
and are transfer functions of the conservative observable system
\[
\tau_{m+1}=\left\{\begin{bmatrix}0&D^{-1}_{\Gamma^*_{m-1}} \cdots
D^{-1}_{\Gamma^*_{0}}C\cr 0&A_{0,m}\end{bmatrix};
\{0\},\sD_{\Gamma^*_{m}}, \sH_{0,m}\right\}.
\]

\noindent 2) $\Gamma_m$ is co-isometric then $\sD_{\Gamma^*_n}=0$,
$\sD_{\Gamma_n}=\sD_{\Gamma_m}$, and
$\Gamma_n=0\in\bL(\sD_{\Gamma_m},\{0\})$, $\sH_{n,0}=\sH_{m,0}$ for
$n\ge m$. The unitarily equivalent controllable conservative systems
\[
\tau^{(k)}_m=\left\{\begin{bmatrix}\Gamma_m&0\cr
A^k\left(D^{-1}_{\Gamma_{m-1}}\cdots D^{-1}_{\Gamma_{0}}
\left(B^*\uphar\sH_{m,0}\right)\right)^*&A_{m-k,k}\end{bmatrix};
\sD_{\Gamma_{m-1}},\sD_{\Gamma^*_{m-1}}, \sH_{m-k,k}\right\}
\]
have transfer functions $\Theta_m(\lambda)=\Gamma_m$ and the
operators $A_{m-k,k}$ are unitarily equivalent unilateral shifts of
multiplicity $\dim\sD_{\Gamma_m},$ the Schur iterates $\Theta_n$ are
null operators from $\bL(\sD_{\Gamma_m},\{0\})$ for $n\ge m+1$ and
are transfer functions of the conservative controllable system
\[
\tau_{m+1}=\left\{\begin{bmatrix}0&0\cr
\left(D^{-1}_{\Gamma_{m}}\cdots D^{-1}_{\Gamma_{0}}
\left(B^*\uphar\sH_{m+1,0}\right)\right)^* &A_{m,0}\end{bmatrix};
\sD_{\Gamma_{m}},\{0\}, \sH_{m,0}\right\}.
\]
We also mention  that if
 $\Theta(\lambda)\in{\bf S}(\sM,\sN)$,
$\Gamma_0=\Theta(0)$,
 $\Theta_1(\lambda)$ is the first Schur
iterate of $\Theta$, and if
\[
\tau=\left\{\begin{bmatrix}\Gamma_0&C\cr
B&A\end{bmatrix};\sM,\sN,\sH\right\}
\]
is a simple conservative system with transfer function $\Theta$,
then the simple conservative  systems
\begin{equation}\label{RealFirst}
\begin{array}{l}
\zeta_{0,1}=\left\{\begin{bmatrix}D^{-1}_{\Gamma^*_0}C(D^{-1}_{\Gamma_0}B^*)^*&D^{-1}_{\Gamma^*_0}C\uphar\ker
D_{A^*} \cr AP_{\ker D_A}D^{-1}_{A^*}B&P_{\ker D_{A^*}}A\uphar\ker
D_{A^*}\end{bmatrix};\sD_{\Gamma_0},\sD_{\Gamma^*_0},\ker
D_{A^*}\right\},\\
\zeta_{1,0}=\left\{\begin{bmatrix}D^{-1}_{\Gamma^*_0}C(D^{-1}_{\Gamma_0}B^*)^*&
D^{-1}_{\Gamma^*_0}CA\uphar\ker {D_A}\cr P_{\ker
D_A}D^{-1}_{A^*}B&P_{\ker D_A}A\uphar\ker
D_A\end{bmatrix};\sD_{\Gamma_0},\sD_{\Gamma^*_0},\ker D_A\right\}
 \end{array}
\end{equation}
have transfer functions $\Theta_1(\lambda)$ (see \cite{Arlarxiv}).
Here the operators $D^{-1}_{\Gamma_0},$ $D^{-1}_{\Gamma^*_0},$ and
$D^{-1}_{A^*}$ are the Moore--Penrose pseudo-inverses. In the sequel
the transformations of the conservative system
\[
\tau\to\zeta_{0,1},\; \tau\to\zeta_{1,0}
\]
we will denote by $\Omega_{0,1}(\tau)$ and $\Omega_{1,0}(\tau)$,
respectively.
\begin{remark}
The problem of isometric, co-isometric, and conservative
realizations of the Schur iterates for a scalar function from the
generalized Schur class has been studied in \cite{AADL1},
\cite{AADLW1}, \cite{AADLW2}, \cite{ADL}. For a scalar finite
Blaschke product the realizations of the Schur iterates are
constructed in \cite{FKatsKr}.
\end{remark}

\section{Block operator CMV matrices and conservative realizations of the Schur class function
(the case when the operator $\Gamma_n$ is neither an isometry nor a
co-isometry for each $n$)} \label{BCMVR}

  Let
\[
\Gamma_0\in\bL(\sM,\sN),\;\Gamma_n\in
\bL(\sD_{\Gamma_{n-1}},\sD_{\Gamma^*_{n-1}}),\; n\ge 1
\]
be a choice sequence. In this and next Section \ref{REST} we are
going to construct by means of $\{\Gamma_n\}_{n\ge 0}$ two unitary
equivalent simple conservative systems with such transfer function
$\Theta\in {\bf S}(\sM,\sN)$ that $\{\Gamma_n\}_{n\ge 0}$ are its
Schur parameters. In particular, this leads to the existence part of
Theorem \ref{SchurAlg} and to the well known result that any
$\Theta\in {\bf S}(\sM,\sN)$ admits a realization as the transfer
function of a simple conservative system.
 We begin with
constructions of block operator CMV matrices for given choice
sequence $\{\Gamma_n\}_{n\ge 0}$ and will suppose that all operators
$\Gamma_n$ are neither isometries nor co-isometries. We will use the
well known constructions of finite and infinite orthogonal sums of
Hilbert spaces. Namely, if $\{H_k\}_{k=1}^\infty$ is a given
sequence of Hilbert spaces, then
\[
\sH=\sum\limits_{k=1}^N \bigoplus H_k
\]
is the Hilbert space with the inner product
\[
(f,g)=\sum\limits_{k=0}^N(f_k,g_k)_{H_k}
\]
for $f=(f_1, \ldots, f_N)^T$ and $g=(g_1, \ldots, g_N)^T$, $f_k,
g_k\in H_k,$ $k=1,\ldots,N$ and the
 norm
\[
||f||^2=\sum\limits_{k=0}^N||f_k||^2_{H_k}.
\]
 The Hilbert space
\[
\sH=\sum\limits_{k=0}^\infty \bigoplus H_k
\]
consists of all vectors of the form $f=(f_1,f_2, \ldots)^T,$ $f_k\in
H_k$, $k=1,2,\ldots,$ such that
\[
||f||^2=\sum\limits_{k=1}^\infty ||f_k||^2_{H_k}<\infty.
\]
The inner product is given by
\[
(f,g)=\sum\limits_{k=1}^\infty(f_k,g_k)_{H_k}.
\]
\subsection{Block operator CMV matrices}
Define the Hilbert spaces
\begin{equation}
\label{statespaces}
%\begin{array}{l}
\sH_0=\sH_0(\{\Gamma_n\}_{n\ge 0}):=\sum\limits_{n\ge
0}\bigoplus\begin{array}{l}\sD_{\Gamma_{2n}}\\\oplus\\\sD_{\Gamma^*_{2n+1}}\end{array},\;%\\
\wt\sH_0=\wt\sH_0(\{\Gamma_n\}_{n\ge 0}):=\sum\limits_{n\ge
0}\bigoplus\begin{array}{l}\sD_{\Gamma^*_{2n}}\\\oplus\\\sD_{\Gamma_{2n+1}}\end{array}.
%\end{array}
\end{equation}
From these definitions it follows, that
\[
\wt\sH_0(\{\Gamma^*_n\}_{n\ge 0})=\sH_0(\{\Gamma_n\}_{n\ge 0}),\;
\sH_0(\{\Gamma^*_n\}_{n\ge 0})=\wt\sH_0(\{\Gamma_n\}_{n\ge 0}).
\]
The spaces $\sN\bigoplus\sH_0$ and $\sM\bigoplus\wt\sH_0$ we
represent in the form
\[
\sN\bigoplus\sH_0=\begin{array}{l}\sM\\\oplus\\\sD_{\Gamma_0}\end{array}\bigoplus\sum\limits_{n\ge
1}\bigoplus\begin{array}{l}\sD_{\Gamma^*_{2n-1}}\\\oplus\\\sD_{\Gamma_{2n}}\end{array},
\]
\[
\sM\bigoplus\wt\sH_0=\begin{array}{l}\sM\\\oplus\\\sD_{\Gamma^*_0}\end{array}\bigoplus\sum\limits_{n\ge
1}\bigoplus\begin{array}{l}\sD_{\Gamma_{2n-1}}\\\oplus\\\sD_{\Gamma^*_{2n}}\end{array}.
\]
Let
\[\begin{array}{l}
{\bf J}_{\Gamma_0}=\begin{bmatrix} \Gamma_0& D_{\Gamma^*_0}\cr
D_{\Gamma_0}&-\Gamma^*_0\end{bmatrix}:\begin{array}{l}\sM\\\oplus\\\sD_{\Gamma^*_0}\end{array}\to
\begin{array}{l}\sN\\\oplus\\\sD_{\Gamma_{0}}\end{array},\\
{\bf J}_{\Gamma_k}=\begin{bmatrix} \Gamma_k& D_{\Gamma^*_k}\cr
D_{\Gamma_k}&-\Gamma^*_k\end{bmatrix}:\begin{array}{l}\sD_{\Gamma_{k-1}}\\\oplus\\\sD_{\Gamma^*_k}\end{array}\to
\begin{array}{l}\sD_{\Gamma^*_{k-1}}\\\oplus\\\sD_{\Gamma_{k}}\end{array},\;
k=1,2,\ldots
\end{array}
\]
 be the elementary rotations. Define the following unitary operators
\begin{equation}\label{OLM}
\begin{array}{l}
\cM_0=\cM_0(\{\Gamma_n\}_{n\ge 0}):=I_\sM\bigoplus\sum\limits_{n\ge
1}\bigoplus{\bf
J}_{\Gamma_{2n-1}}:\sM\bigoplus\sH_0\to \sM\bigoplus\wt\sH_0,\\
\wt \cM_0=\wt\cM_0(\{\Gamma_n\}_{n\ge
0}):=I_\sN\bigoplus\sum\limits_{n\ge 1}\bigoplus{\bf
J}_{\Gamma_{2n-1}}:\sN\bigoplus\sH_0\to\sN\bigoplus\wt\sH_0,\\
\cL_0=\cL_0(\{\Gamma_n\}_{n\ge 0}):={\bf
J}_{\Gamma_0}\bigoplus\sum\limits_{n\ge 1}\bigoplus{\bf
J}_{\Gamma_{2n}}:\sM\bigoplus\wt\sH_0\to\sN\bigoplus\sH_0.\\
\end{array}
\end{equation}
Observe that
\[
\left(\cL_0(\{\Gamma_n\}_{n\ge
0})\right)^*=\cL_0(\{\Gamma^*_n\}_{n\ge 0}).
\]
 Let
\begin{equation}
\label{V0} \cV_0=\cV_0(\{\Gamma_n\}_{n\ge 0}):=\sum\limits_{n\ge
1}\bigoplus{\bf J}_{\Gamma_{2n-1}}:\sH_0\to\wt\sH_0.
\end{equation}
Clearly, the operator $\cV_0$ is unitary and
\begin{equation}
\label{MOV}
\cM_0=I_\sM\bigoplus\cV_0,\;\wt\cM_0=I_\sN\bigoplus\cV_0.
\end{equation}
It follows that
\[
\left(\wt\cM_0(\{\Gamma_n\}_{n\ge
0})\right)^*=\cM_0(\{\Gamma^*_n\}_{n\ge
0}),\;\left(\cM_0(\{\Gamma_n\}_{n\ge
0})\right)^*=\wt\cM_0(\{\Gamma^*_n\}_{n\ge 0})
\]
Finally define the unitary operators
\begin{equation}
\label{defcmv}
\begin{array}{l}
\cU_0=\cU_0(\{\Gamma_n\}_{n\ge 0}):=\cL_0\cM_0:\sM\bigoplus\sH_0\to\sN\bigoplus\sH_0,\\
\wt\cU_0=\wt\cU_0(\{\Gamma_n\}_{n\ge
0}):=\wt\cM_0\cL_0:\sM\bigoplus\wt\sH_0\to\sN\bigoplus\wt\sH_0.
\end{array}
\end{equation}
By calculations we get
\[%begin{equation} \label{BCMV1}
\cU_0=
\begin{bmatrix}
\Gamma_0&D_{\Gamma^*_0}\Gamma_1&D_{\Gamma^*_0}D_{\Gamma^*_1}&0&0&0&0&0&\ldots&\ldots\cr
D_{\Gamma_0}&-\Gamma^*_0\Gamma_1&-\Gamma^*_0D_{\Gamma^*_1}&0&0&0&0&0&\ldots&\ldots\cr
0&\Gamma_2D_{\Gamma_1}&-\Gamma_2\Gamma^*_1&D_{\Gamma^*_2}\Gamma_3&D_{\Gamma^*_2}D_{\Gamma^*_3}&0&0&0&\ldots&\ldots\cr
0&D_{\Gamma_2}D_{\Gamma_1}&-D_{\Gamma_2}\Gamma^*_1&-\Gamma^*_2\Gamma_3&-\Gamma^*_2D_{\Gamma^*_3}&0&0&0&\ldots&\ldots\cr
0&0&0&\Gamma_4D_{\Gamma_3}&-\Gamma_4\Gamma^*_3&D_{\Gamma^*_4}\Gamma_5&D_{\Gamma^*_4}D_{\Gamma^*_5}&0&\dots&\ldots\cr
0&0&0&D_{\Gamma_4}D_{\Gamma_3}&-D_{\Gamma_4}\Gamma^*_3&-\Gamma^*_4\Gamma_5&-\Gamma_4D_{\Gamma^*_5}&0&\ldots&\ldots\cr
0&0&0&0&0&\Gamma_6D_{\Gamma_5}&-\Gamma_6\Gamma^*_5&D_{\Gamma^*_6}\Gamma_7&D_{\Gamma^*_6}D_{\Gamma^*_7}&\ldots\cr
\vdots&\vdots&\vdots&\vdots&\vdots&\vdots&\vdots&\vdots&\vdots&\vdots
\end{bmatrix}
\]%end{equation}
and
\[%begin{equation}\label{BCMV2}
 \wt\cU_0=
\begin{bmatrix}
\Gamma_0&D_{\Gamma^*_0}&0&0&0&0&0&0&\ldots&\ldots\cr
\Gamma_1D_{\Gamma_0}&-\Gamma_1\Gamma^*_0&D_{\Gamma^*_1}\Gamma_2&D_{\Gamma^*_1}D_{\Gamma^*_2}&0&0&0&0&\ldots&\ldots\cr
D_{\Gamma_1}D_{\Gamma_0}&-D_{\Gamma_1}\Gamma^*_0&-\Gamma^*_1\Gamma_2&-\Gamma^*_1D_{\Gamma^*_2}&0&0&0&0&\ldots&\ldots\cr
0&0&\Gamma_3D_{\Gamma_2}&-\Gamma_3\Gamma^*_2&D_{\Gamma^*_3}\Gamma_4&D_{\Gamma^*_3}D_{\Gamma^*_4}&0&0&\ldots&\ldots\cr
0&0&D_{\Gamma_3}D_{\Gamma_2}&-D_{\Gamma_3}\Gamma^*_2&-\Gamma^*_3\Gamma_4&-\Gamma^*_3D_{\Gamma^*_4}&0&0&\ldots&\ldots\cr
0&0&0&0&\Gamma_5D_{\Gamma_4}&-\Gamma_5\Gamma^*_4&D_{\Gamma^*_5}\Gamma_6&D_{\Gamma^*_5}D_{\Gamma^*_6}&0&\ldots\cr
0&0&0&0&D_{\Gamma_5}D_{\Gamma_4}&-D_{\Gamma_5}\Gamma^*_4&-\Gamma^*_5\Gamma_6&-\Gamma^*_5D_{\Gamma^*_6}&0&\ldots\cr
\vdots&\vdots&\vdots&\vdots&\vdots&\vdots&\vdots&\vdots&\vdots&\vdots
\end{bmatrix}.
\]%end{equation}
 Let
\[
\cC_0=\begin{bmatrix}D_{\Gamma^*_0}\Gamma_1&D_{\Gamma^*_0}D_{\Gamma^*_1}\end{bmatrix}:
\begin{array}{l}\sD_{\Gamma_{0}}\\\oplus\\\sD_{\Gamma^*_{1}}\end{array}
\to \sN,\;\cA_0=\begin{bmatrix} D_{\Gamma_0}\cr 0
\end{bmatrix}:\sM\to
\begin{array}{l}\sD_{\Gamma_{0}}\\\oplus\\\sD_{\Gamma^*_{1}}\end{array},
\]
\begin{equation}
\label{BLOKIT} \left\{
\begin{array}{l}
\cB_n=\begin{bmatrix}-\Gamma^*_{2n-2}\Gamma_{2n-1}&-\Gamma^*_{2n-2}D_{\Gamma^*_{2n-1}}\cr
\Gamma_{2n}D_{\Gamma_{2n-1}}&-\Gamma_{2n}\Gamma^*_{2n-1}\end{bmatrix}:\begin{array}{l}\sD_{\Gamma_{2n-2}}\\\oplus\\\sD_{\Gamma^*_{2n-1}}\end{array}
\to\begin{array}{l}\sD_{\Gamma_{2n-2}}\\\oplus\\\sD_{\Gamma^*_{2n-1}}\end{array},\\
\cC_n=\begin{bmatrix}0&0\cr
D_{\Gamma^*_{2n}}\Gamma_{2n+1}&D_{\Gamma^*_{2n}}D_{\Gamma^*_{2n+1}}\end{bmatrix}:\begin{array}{l}\sD_{\Gamma_{2n}}\\\oplus\\\sD_{\Gamma^*_{2n+1}}\end{array}
\to\begin{array}{l}\sD_{\Gamma_{2n-2}}\\\oplus\\\sD_{\Gamma^*_{2n-1}}\end{array},\\
\cA_n=\begin{bmatrix}D_{\Gamma_{2n}}D_{\Gamma_{2n-1}}&
-D_{\Gamma_{2n}}\Gamma^*_{2n-1}\cr
0&0\end{bmatrix}:\begin{array}{l}\sD_{\Gamma_{2n-2}}\\\oplus\\\sD_{\Gamma^*_{2n-1}}\end{array}
\to\begin{array}{l}\sD_{\Gamma_{2n}}\\\oplus\\\sD_{\Gamma^*_{2n+1}}\end{array},
\end{array}\right.
\end{equation}
\[
\wt
C_0=\begin{bmatrix}D_{\Gamma^*_0}&0\end{bmatrix}:\begin{array}{l}\sD_{\Gamma^*_0}\\\oplus\\\sD_{\Gamma_1}\end{array}\to\sN
,\;\wt\cA_0=\begin{bmatrix}\Gamma_1 D_{\Gamma_0}\cr
D_{\Gamma_1}D_{\Gamma_0}\end{bmatrix}:\sM\to\begin{array}{l}\sD_{\Gamma^*_0}\\\oplus\\\sD_{\Gamma_1}\end{array},
\]
\begin{equation}
\label{BLOKIWT} \left\{
\begin{array}{l}
 \wt\cB_n=\begin{bmatrix}
-\Gamma_{2n-1}\Gamma^*_{2n-2}&D_{\Gamma^*_{2n-1}}\Gamma_{2n}\cr
-D_{\Gamma_{2n-1}}\Gamma^*_{2n-2}&-\Gamma^*_{2n-1}\Gamma_{2n}\end{bmatrix}:\begin{array}{l}\sD_{\Gamma^*_{2n-2}}\\\oplus\\\sD_{\Gamma_{2n-1}}\end{array}\to
\begin{array}{l}\sD_{\Gamma^*_{2n-2}}\\\oplus\\\sD_{\Gamma_{2n-1}}\end{array},\\
\wt\cC_n=\begin{bmatrix} D_{\Gamma^*_{2n-1}}D_{\Gamma^*_{2n}}&0\cr
-\Gamma^*_{2n-1}D_{\Gamma^*_{2n}}&0\end{bmatrix}:\begin{array}{l}\sD_{\Gamma^*_{2n}}\\\oplus\\\sD_{\Gamma_{2n+1}}\end{array}\to
\begin{array}{l}\sD_{\Gamma^*_{2n-2}}\\\oplus\\\sD_{\Gamma_{2n-1}}\end{array},\\
\wt\cA_n=\begin{bmatrix}0&\Gamma_{2n+1}D_{\Gamma_{2n}}\cr
0&D_{\Gamma_{2n+1}}D_{\Gamma_{2n}}\end{bmatrix}:\begin{array}{l}\sD_{\Gamma^*_{2n-2}}\\\oplus\\\sD_{\Gamma_{2n-1}}\end{array}\to
\begin{array}{l}\sD_{\Gamma^*_{2n}}\\\oplus\\\sD_{\Gamma_{2n+1}}\end{array}
\end{array}\right.
\end{equation}
It is easy to see that the operators $\cU_0$ and $\wt\cU_0$ take the
following three-diagonal block operator matrix form
\[%begin{equation}\label{THREE}
  \cU_0=\cU_0\left(\{\Gamma_n\}_{n\ge 0}\right)=\begin{bmatrix} \Gamma_0 & \cC_0 & 0 &0   & 0 &
\cdot &
\cdot  \\
\cA_0 & \cB_1 & \cC_1 & 0 &0& \cdot &
\cdot  \\
0    & \cA_1 & \cB_2 & \cC_2 &0& \cdot &
\cdot   \\
\vdots & \vdots & \vdots & \vdots & \vdots & \vdots & \vdots
\end{bmatrix},
\]%end{equation}
\[%begin{equation}\label{THREEW}
  \wt\cU_0=\wt\cU_0\left(\{\Gamma_n\}_{n\ge 0}\right)=\begin{bmatrix} \Gamma_0 & \wt\cC_0 & 0 &0   & 0 &
\cdot &
\cdot  \\
\wt\cA_0 & \wt\cB_1 & \wt\cC_1 & 0 &0& \cdot &
\cdot  \\
0    & \wt\cA_1 & \wt\cB_2 & \wt\cC_2 &0& \cdot &
\cdot   \\
\vdots & \vdots & \vdots & \vdots & \vdots & \vdots & \vdots
\end{bmatrix}.
\]%end{equation}
The block operator matrices $\cU_0$ and $\wt\cU_0$ we will call
\textit{block operator CMV matrices}.
 Observe that
\begin{equation}
\label{MU} \wt M_0\cU_0=\wt\cU_0\cM_0,
\end{equation}
and the following equalities hold true
\begin{equation}
\label{ADJ1} \left(\cU_0(\{\Gamma_n\}_{n\ge
0})\right)^*=\wt\cU_0(\{\Gamma^*_n\}_{n\ge
0}),\;\left(\wt\cU_0(\{\Gamma_n\}_{n\ge
0})\right)^*=\cU_0(\{\Gamma^*_n\}_{n\ge 0})
\end{equation}
Therefore the matrix $\wt\cU_0$ can be obtained from $\cU_0$ by
passing to the adjoint $\cU^*_0$ and then by replacing $\Gamma_n$
(respect., $\Gamma^*_n$) by $\Gamma^*_n$ (respect., $\Gamma_n$) for
all $n$. In the case when the choice sequence consists of complex
numbers from the unit disk the matrix $\wt\cU_0$ is the transpose to
$\cU_0$, i.e., $\wt\cU_0=\cU^t_0$.

\subsection{Truncated block operator CMV matrices}\label{sectrunc }
 Define two contractions
\begin{equation}
\label{TRUNC}\cT_0= \cT_0(\{\Gamma_n\}_{n\ge
0}):=P_{\sH_0}\cU_0\uphar\sH_0:\sH_0\to\sH_0,
\end{equation}
\begin{equation}
\label{TRUNCT} \wt\cT_0=\wt\cT_0(\{\Gamma_n\}_{n\ge
0}):=P_{\wt\sH_0}\wt\cU_0\uphar\wt\sH_0:\wt\sH_0\to\wt\sH_0.
\end{equation}
The operators $\cT_0$ and $\wt\cT_0$ take on the three-diagonal
block operator matrix forms
\[%begin{equation}\label{THREET}
  \cT_0=\begin{bmatrix}
\cB_1 & \cC_1 & 0 &0& 0&
\cdot  \\
 \cA_1 & \cB_2 & \cC_2 &0& 0 &
\cdot   \\
0&\cA_2&\cB_3&\cC_3&0&\cdot\\
 \vdots & \vdots & \vdots & \vdots & \vdots
& \vdots
\end{bmatrix},\;
%\]%end{equation}
%\[%begin{equation}\label{THREEWT}
  \wt\cT_0=\begin{bmatrix}
\wt\cB_1 & \wt\cC_1 & 0 &0& 0&
\cdot  \\
 \wt\cA_1 & \wt\cB_2 & \wt\cC_2 &0& 0 &
\cdot   \\
0&\wt\cA_2&\wt\cB_3&\wt\cC_3&0&\cdot\\
 \vdots & \vdots & \vdots & \vdots & \vdots
& \vdots
\end{bmatrix},
\]%end{equation}
where $\cA_n,\cB_n,\cC_n$, $\wt\cA_n,\wt\cB_n$, and $\wt\cC_n$ are
given by \eqref{BLOKIT} and \eqref{BLOKIWT}. Since the matrices
$\cT_0$ and $\wt\cT_0$ are obtained from $\cU_0$ and $\wt\cU_0$ by
deleting the first rows and the first columns, we will call them
\textit{truncated block operator CMV matrices}. Observe that from
the definitions of $\cL_0$, $\cM_0$, $\wt\cM_0,$ $\cT_0$, and
$\wt\cT_0$ it follows that $\cT_0$ and $\wt\cT_0$ are products of
two block-diagonal matrices
\begin{equation}
\label{T0prod} \cT_0=\cT_0(\{\Gamma_n\}_{n\ge
0})=\begin{bmatrix}-\Gamma^*_0\cr &{\bf J}_{\Gamma_2}\cr & &{\bf
J}_{\Gamma_4}\cr & &&\ddots\cr &&&& {\bf J}_{\Gamma_{2n}}\cr
&&&&&\ddots
\end{bmatrix}\begin{bmatrix}{\bf J}_{\Gamma_1}\cr &{\bf
J}_{\Gamma_3}\cr & &\ddots\cr &&& {\bf J}_{\Gamma_{2n+1}}\cr
&&&\ddots\cr &&&&\end{bmatrix},
\end{equation}
\begin{equation}
\label{WT0prod} \wt\cT_0=\wt\cT_0(\{\Gamma_n\}_{n\ge
0})=\begin{bmatrix}{\bf J}_{\Gamma_1}\cr &{\bf J}_{\Gamma_3}\cr &
&\ddots\cr &&& {\bf J}_{\Gamma_{2n+1}}\cr &&&\ddots\cr
&&&&\end{bmatrix}\begin{bmatrix}-\Gamma^*_0\cr &{\bf
J}_{\Gamma_2}\cr & &{\bf J}_{\Gamma_4}\cr & &&\ddots\cr &&&& {\bf
J}_{\Gamma_{2n}}\cr &&&&&\ddots
\end{bmatrix}.
\end{equation}
In particular, it follows that
\begin{equation}
\label{ADJ}
 \left(\cT_0(\{\Gamma_n\}_{n\ge
0})\right)^*=\wt\cT_0(\{\Gamma^*_n\}_{n\ge 0}).
\end{equation}
From \eqref{T0prod} and \eqref{WT0prod} we have
\[
\cV_0\cT_0=\wt\cT_0\cV_0,
\]
where the unitary operator $\cV_0$ is defined by \eqref{V0}.
Therefore, the operators $\cT_0$ and $\wt\cT_0$ are unitarily
equivalent.
\begin{proposition}
\label{COIUN} Let $\Theta\in{\bf S}(\sM,\sN)$ and let
$\{\Gamma_n\}_{n\ge 0}$ be the Schur parameters of $\Theta$. Suppose
$\Gamma_n$ is neither isometric nor co-isometric for each $n$. Let
the function $\Omega\in{\bf S}(\sK,\sL)$ coincides with $\Theta$ and
Let $\{G_n\}_{n\ge 0}$ be the Schur parameters of $\Omega$. Then
truncated block operator CMV matrices $\cT_0(\{\Gamma_n\}_{n\ge 0})$
and $\cT_0(\{G_n\}_{n\ge 0})$ (respect.,
$\wt\cT_0(\{\Gamma_n\}_{n\ge 0})$ and $\wt\cT_0(\{G_n\}_{n\ge 0})$)
are unitarily equivalent.
\end{proposition}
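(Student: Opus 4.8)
The plan is to construct an explicit block-diagonal unitary intertwining $\cT_0(\{\Gamma_n\}_{n\ge 0})$ and $\cT_0(\{G_n\}_{n\ge 0})$, built from the unitaries that the coincidence $V\Theta U=\Omega$ induces between corresponding defect spaces. First I would record that, since $\sD_{\Gamma_n}\subset\sD_{\Gamma_{n-1}}\subset\cdots\subset\sM$ and $\sD_{\Gamma^*_n}\subset\sD_{\Gamma^*_{n-1}}\subset\cdots\subset\sN$, the operators
\[
\Phi_n:=U^*\uphar\sD_{\Gamma_n}:\sD_{\Gamma_n}\to\sD_{G_n},\qquad \Psi_n:=V\uphar\sD_{\Gamma^*_n}:\sD_{\Gamma^*_n}\to\sD_{G^*_n},\qquad n\ge 0,
\]
are unitary by \eqref{COINC1} and nested, namely $\Phi_n=\Phi_{n-1}\uphar\sD_{\Gamma_n}$ and $\Psi_n=\Psi_{n-1}\uphar\sD_{\Gamma^*_n}$. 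The relations \eqref{COINC1} also give $G_0=V\Gamma_0U$ and, for $n\ge 1$,
\[
G_n=\Psi_{n-1}\Gamma_n\Phi^*_{n-1},\quad D_{G_n}=\Phi_{n-1}D_{\Gamma_n}\Phi^*_{n-1},\quad D_{G^*_n}=\Psi_{n-1}D_{\Gamma^*_n}\Psi^*_{n-1}.
\]
In particular each $G_n$ is unitarily equivalent to $\Gamma_n$, hence neither isometric nor co-isometric, so $\cT_0(\{G_n\}_{n\ge 0})$ is defined in the present framework.

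Next I would verify that these unitaries intertwine the elementary rotations blockwise. This is the general fact that ${\bf J}_T$ is preserved under unitary equivalence of $T$; substituting the displayed transformation rules and using the nesting to absorb the off-diagonal entries $D_{\Gamma_k}$, $D_{\Gamma^*_k}$, a direct computation yields
\[
\begin{bmatrix}\Psi_{k-1}&\cr&\Phi_k\end{bmatrix}{\bf J}_{\Gamma_k}={\bf J}_{G_k}\begin{bmatrix}\Phi_{k-1}&\cr&\Psi_k\end{bmatrix},\qquad k\ge 1,
\]
together with $\Phi_0(-\Gamma^*_0)=(-G^*_0)\Psi_0$ for the corner entry.

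Finally I would assemble the global intertwiner from the factorization \eqref{T0prod}. Write $\cT_0=\cR\,\cV_0$, where $\cV_0=\cV_0(\{\Gamma_n\})$ is the odd block-diagonal rotation \eqref{V0} mapping $\sH_0\to\wt\sH_0$, and $\cR=\cR(\{\Gamma_n\})=\mathrm{diag}(-\Gamma^*_0,{\bf J}_{\Gamma_2},{\bf J}_{\Gamma_4},\ldots):\wt\sH_0\to\sH_0$. Matching the summand patterns $\sD_{\Gamma_0},\sD_{\Gamma^*_1},\sD_{\Gamma_2},\ldots$ of $\sH_0$ and $\sD_{\Gamma^*_0},\sD_{\Gamma_1},\sD_{\Gamma^*_2},\ldots$ of $\wt\sH_0$, I define the diagonal unitaries
\[
W:=\mathrm{diag}(\Phi_0,\Psi_1,\Phi_2,\Psi_3,\ldots),\qquad \wt W:=\mathrm{diag}(\Psi_0,\Phi_1,\Psi_2,\Phi_3,\ldots).
\]
The blockwise relations of the previous step are exactly $\wt W\,\cV_0(\{\Gamma_n\})=\cV_0(\{G_n\})\,W$ and $W\,\cR(\{\Gamma_n\})=\cR(\{G_n\})\,\wt W$, whence
\[
W\,\cT_0(\{\Gamma_n\})=W\,\cR(\{\Gamma_n\})\,\cV_0(\{\Gamma_n\})=\cR(\{G_n\})\,\wt W\,\cV_0(\{\Gamma_n\})=\cR(\{G_n\})\,\cV_0(\{G_n\})\,W=\cT_0(\{G_n\})\,W,
\]
so $W$ effects the required unitary equivalence. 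The claim for $\wt\cT_0$ follows verbatim from \eqref{WT0prod} with the roles of $W$ and $\wt W$ exchanged, or from the already established identity $\cV_0\cT_0=\wt\cT_0\cV_0$.

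The step that needs genuine care is the blockwise intertwining: one must check that the coincidence really does propagate to every Schur iterate through restrictions of the single pair $(U^*,V)$, and that the nesting $\Phi_n=\Phi_{n-1}\uphar\sD_{\Gamma_n}$, $\Psi_n=\Psi_{n-1}\uphar\sD_{\Gamma^*_n}$ is precisely what reconciles the off-diagonal defect blocks $D_{\Gamma_k}$, $D_{\Gamma^*_k}$ of ${\bf J}_{\Gamma_k}$ with those of ${\bf J}_{G_k}$. Everything else is formal bookkeeping once these identifications are fixed.
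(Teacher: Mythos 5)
Your proof is correct and follows essentially the same route as the paper: the paper also derives the blockwise intertwining ${\bf J}_{G_n}\,\mathrm{diag}(U^*,V)=\mathrm{diag}(V,U^*)\,{\bf J}_{\Gamma_n}$ from \eqref{COINC1} and then uses the factorization \eqref{T0prod} to conclude that the diagonal unitary $\cW=\mathrm{diag}(U^*,V,U^*,V,\ldots)$ — which is exactly your $W=\mathrm{diag}(\Phi_0,\Psi_1,\Phi_2,\Psi_3,\ldots)$ written via restrictions — intertwines $\cT_0(\{\Gamma_n\}_{n\ge 0})$ with $\cT_0(\{G_n\}_{n\ge 0})$. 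Your only addition is to make explicit the second diagonal unitary $\wt W$ on the intermediate space $\wt\sH_0$ and to verify the two diagonal factors separately, a bookkeeping step the paper leaves implicit.
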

\begin{proof}Since
$\Omega(\lambda)=V\Theta(\lambda)U,$ where $U\in\bL(\sK,\sM)$ and
$V\in\bL(\sN,\sL)$ are unitary operators, we get relations
\eqref{COINC1}. It follows that $\sD_{G_n}\ne\{0\}$ and
$\sD_{G^*_n}\ne \{0\}$ for all $n$. Hence, we have
\begin{equation}
\label{connect}{\bf J}_{G_n}\begin{bmatrix}U^*& 0\cr 0& V
\end{bmatrix}=\begin{bmatrix}V& 0\cr 0& U^*
\end{bmatrix}{\bf J}_{\Gamma_n},\; n=0,1,\ldots.
\end{equation}
Define the Hilbert space
\[
\sH^{\Omega}_0=\sH_0(\{G_n\}_{n\ge 0}):=\sum\limits_{n\ge
0}\bigoplus\begin{array}{l}\sD_{G_{2n}}\\\oplus\\\sD_{G^*_{2n+1}}\end{array}
\]
and truncated block operator CMV matrix
\[
\cT_0(\{G_n\}_{n\ge 0}):=\begin{bmatrix}-G^*_0\cr &{\bf J}_{G_2}\cr
& &{\bf J}_{G_4}\cr & &&\ddots\cr &&&& {\bf J}_{G_{2n}}\cr
&&&&&\ddots
\end{bmatrix}\begin{bmatrix}{\bf J}_{G_1}\cr &{\bf
J}_{G_3}\cr & &\ddots\cr &&& {\bf J}_{G_{2n+1}}\cr &&&\ddots\cr
&&&&\end{bmatrix},
\]
Define the unitary operator
\[
\cW=\begin{bmatrix}U^*\cr &V\cr & &U^*\cr & && V\cr &&&& \ddots%\cr
%&&&&& {\bf J}_{G_{2n}}\cr &&&&&\ddots
\end{bmatrix}:\sH_0\to \sH^{\Omega}_0
\]
From \eqref{T0prod} and \eqref{connect} we obtain
\[
\cW\cT_0(\{\Gamma_n\}_{n\ge 0})=\cT(\{G_n\}_{n\ge 0})\cW.
\]
Thus $\cT_0(\{\Gamma_n\}_{n\ge 0})$ and $\cT(\{G_n\}_{n\ge 0})$ are
unitarily equivalent.
\end{proof}
 Now we are going to find the defect operators and
defect subspaces for $\cT_0$ and $\wt\cT_0.$ Let ${\bf f}=(\vec
f_0,\vec f_1,\ldots)^T \in\sH_0$, where
\[
\vec f_n=\begin{bmatrix}h_n\cr
g_n\end{bmatrix}\in\begin{array}{l}\sD_{\Gamma_{2n}}\\\oplus\\\sD_{\Gamma^*_{2n+1}}\end{array},\;
n=0,1,\ldots.
\]
Then
\begin{equation}
\label{DEFEKTI}
\begin{array}{l}
||{\bf f}||^2-||\cT_0{\bf f}||^2=||P_\sN\cU_0{\bf
f|}|^2=\left\|\cC_0\begin{bmatrix}h_0\cr
g_0\end{bmatrix}\right\|^2=||D_{\Gamma^*_0}(\Gamma_1h_0+D_{\Gamma^*_1}g_0)||^2,\\
||{\bf f}||^2-||\cT^*_0{\bf f}||^2=||P_\sM\cU_0^*{\bf
f|}|^2=\left\|\cA^*_0\begin{bmatrix}h_0\cr
g_0\end{bmatrix}\right\|^2=||D_{\Gamma_0}h_0||^2.
\end{array}
\end{equation}
 Let ${\bf x}=(x_0,x_1,\ldots)^T\in\wt\sH_0,$ where
\[
x_n=\begin{bmatrix}h_n\cr
g_n\end{bmatrix}\in\begin{array}{l}\sD_{\Gamma^*_{2n}}\\\oplus\\\sD_{\Gamma^*_{2n+1}}\end{array},\;
n=0,1,\ldots.
\]
Then
\[
\begin{array}{l}
||{\bf x}||^2-||\wt\cT_0{\bf x}||^2=||P_\sN\wt\cU_0{\bf
x|}|^2=\left\|\wt\cC_0\begin{bmatrix}h_0\cr
g_0\end{bmatrix}\right\|^2=||D_{\Gamma^*_0}h_0||^2,\\
||{\bf x}||^2-||\wt\cT^*_0{\bf x}||^2=||P_\sM\wt\cU_0^*{\bf
x|}|^2=\left\|\wt\cA^*_0\begin{bmatrix}h_0\cr
g_0\end{bmatrix}\right\|^2=||D_{\Gamma_0}(\Gamma^*_1h_0+D_{\Gamma_1}
g_0)||^2.
\end{array}
\]
Now from Proposition \ref{ranges} it follows that
\begin{equation}
\label{kernels} \left\{ \begin{array}{l}
 \ker
D_{\cT_0}=\left\{\begin{bmatrix}D_{\Gamma_1}\f\cr-\Gamma_1\f
\end{bmatrix},\;\f\in \sD_{\Gamma_1}\right\}\bigoplus\sum\limits_{n\ge
1}\bigoplus\begin{array}{l}\sD_{\Gamma_{2n}}\\\oplus\\\sD_{\Gamma^*_{2n+1}}\end{array},\\
\ker D_{\cT^*_0}=\sD_{\Gamma^*_1}\bigoplus\sum\limits_{n\ge
1}\bigoplus\begin{array}{l}\sD_{\Gamma_{2n}}\\\oplus\\\sD_{\Gamma^*_{2n+1}}\end{array},\\
\sD_{\cT_0}=\left\{\begin{bmatrix}\Gamma^*_1\psi\cr
D_{\Gamma^*_1}\psi
\end{bmatrix},\;\psi\in \sD_{\Gamma^*_0}\right\}\bigoplus \vec 0,\;
\vec 0\in \sum\limits_{n\ge
1}\bigoplus\begin{array}{l}\sD_{\Gamma_{2n}}\\\oplus\\\sD_{\Gamma^*_{2n+1}}\end{array},\\
\sD_{\cT^*_0}=\sD_{\Gamma_0}\bigoplus \vec 0,\; \vec 0\in
\sD_{\Gamma^*_1}\bigoplus \sum\limits_{n\ge
1}\bigoplus\begin{array}{l}\sD_{\Gamma_{2n}}\\\oplus\\\sD_{\Gamma^*_{2n+1}}\end{array}.
\end{array}\right.
\end{equation}
\begin{equation}
\label{kernels1} \left\{ \begin{array}{l}
 \ker
D_{\wt\cT_0}=\sD_{\Gamma_1}\bigoplus\sum\limits_{n\ge
1}\bigoplus\begin{array}{l}\sD_{\Gamma^*_{2n}}\\\oplus\\\sD_{\Gamma_{2n+1}}\end{array},\\
\ker
D_{\wt\cT^*_0}=\left\{\begin{bmatrix}D_{\Gamma^*_1}\f\cr-\Gamma^*_1\f
\end{bmatrix},\;\f\in \sD_{\Gamma^*_0}\right\}\bigoplus\sum\limits_{n\ge
1}\bigoplus\begin{array}{l}\sD_{\Gamma^*_{2n}}\\\oplus\\\sD_{\Gamma_{2n+1}}\end{array},\\
\sD_{\wt\cT_0}=\sD_{\Gamma^*_0}\bigoplus \vec 0,\; \vec 0\in
\sD_{\Gamma_1}\bigoplus \sum\limits_{n\ge
1}\bigoplus\begin{array}{l}\sD_{\Gamma^*_{2n}}\\\oplus\\\sD_{\Gamma_{2n+1}}\end{array},\\
\sD_{\wt\cT^*_0}=\left\{\begin{bmatrix}\Gamma_1\psi\cr
D_{\Gamma_1}\psi
\end{bmatrix},\;\psi\in \sD_{\Gamma_0}\right\}\bigoplus \vec 0,\;
\vec 0\in \sum\limits_{n\ge
1}\bigoplus\begin{array}{l}\sD_{\Gamma^*_{2n}}\\\oplus\\\sD_{\Gamma_{2n+1}}\end{array}.
\end{array}
\right.
\end{equation}
\subsection{Simple conservative realizations of the Schur class
function by means of its Schur parameters}\label{RRR}
 Let
\[
\begin{array}{l}
\cG_0=\cG_0(\{\Gamma_n\}_{n\ge}
0)=\begin{bmatrix}D_{\Gamma^*_0}\Gamma_1&D_{\Gamma^*_0}D_{\Gamma^*_1}&0&0&\ldots\end{bmatrix}:
\sH_0 \to \sN,\\
\wt\cG_0=\wt\cG_0(\{\Gamma_n\}_{n\ge
0})=\begin{bmatrix}D_{\Gamma^*_0}&0&0&\ldots\end{bmatrix}:\wt\sH_0\to\sN,\\
\cF_0=\cF_0(\{\Gamma_n\}_{n\ge 0})=\begin{bmatrix} D_{\Gamma_0}\cr
0\cr0\cr\vdots
\end{bmatrix}:\sM\to\sH_0,\;%\\
\wt\cF_0=\wt\cF_0(\{\Gamma_n\}_{n\ge 0})=\begin{bmatrix}\Gamma_1
D_{\Gamma_0}\cr D_{\Gamma_1}D_{\Gamma_0}\cr0\cr
0\cr\vdots\end{bmatrix}:\sM\to\wt\sH_0.
\end{array}
\]
The operators $\cU_0$ and $\wt\cU_0$ can be represented by $2\times
2$ block operator matrices
\[
\begin{array}{l}
\cU_0=\begin{bmatrix} \Gamma_0&\cG_0\cr \cF_0
&\cT_0\end{bmatrix}:\begin{array}{l}\sM\\\oplus\\\sH_0\end{array}\to
\begin{array}{l}\sN\\\oplus\\\sH_0\end{array},\\
\wt\cU_0=\begin{bmatrix} \Gamma_0&\wt\cG_0\cr\wt\cF_0
&\wt\cT_0\end{bmatrix}:\begin{array}{l}\sM\\\oplus\\\wt
\sH_0\end{array}\to
\begin{array}{l}\sN\\\oplus\\\wt\sH_0\end{array}.
\end{array}
\]
Define the following conservative systems
\begin{equation}
\label{CMVMODEL} \begin{array}{l} \zeta_0=\left\{\begin{bmatrix}
\Gamma_0&\cG_0\cr \cF_0
&\cT_0\end{bmatrix};\sM,\sN,\sH_0\right\}=\left\{\cU_0(\{\Gamma_n\}_{n\ge 0});\sM,\sN,\sH_0(\{\Gamma_n\}_{n\ge 0})\right\},\\
\wt\zeta_0=\left\{\begin{bmatrix} \Gamma_0&\wt\cG_0\cr \wt\cF_0
&\wt\cT_0\end{bmatrix};\sM,\sN,\wt\sH_0\right\}=\left\{\wt\cU_0(\{\Gamma_n\}_{n\ge
0});\sM,\sN,\wt\sH_0(\{\Gamma_n\}_{n\ge 0})\right\}.
\end{array}
\end{equation}
The equalities \eqref{MOV} and \eqref{MU} yield that systems
$\zeta_0$ and $\wt\zeta_0$ are unitarily equivalent. Hence,
$\zeta_0$ and $\wt\zeta_0$ have equal transfer functions.

Observe that
\[
\cF_0=\begin{bmatrix}I_\sM\cr 0\cr
0\cr\vdots\end{bmatrix}D_{\Gamma_0},\;
\cG_0=D_{\Gamma^*_0}\begin{bmatrix}
\Gamma_1&D_{\Gamma^*_1}&0&0&\ldots\end{bmatrix},
\]
\[
\wt\cF_0=\begin{bmatrix}\Gamma_1\cr D_{\Gamma_1}\cr 0\cr
0\cr\vdots\end{bmatrix}D_{\Gamma_0},\;\wt\cG_0=D_{\Gamma^*_0},
\begin{bmatrix} I_\sN&0&0&0&\ldots\end{bmatrix}
\]
and
\[
\begin{bmatrix}
\Gamma_1&D_{\Gamma^*_1}&0&0&\ldots\end{bmatrix}\begin{bmatrix}I_\sM\cr
0\cr 0\cr\vdots\end{bmatrix}=\begin{bmatrix}
I_\sN&0&0&0&\ldots\end{bmatrix}\begin{bmatrix}\Gamma_1\cr
D_{\Gamma_1}\cr 0\cr 0\cr\vdots\end{bmatrix}=\Gamma_1.
\]
\begin{theorem}
\label{CR} The unitarily equivalent conservative systems $\zeta_0$
and $\wt\zeta_0$ given by \eqref{CMVMODEL} are simple and the Schur
parameters of the transfer function of $\zeta_0$ and $\wt\zeta_0$
are $\{\Gamma_n\}_{n\ge 0}.$
\end{theorem}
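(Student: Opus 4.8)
The plan is to treat $\zeta_0$ only, since \eqref{MOV} and \eqref{MU} already give that $\zeta_0$ and $\wt\zeta_0$ are unitarily equivalent and hence have the same transfer function. Conservativity is automatic: by \eqref{defcmv} the matrix $\cU_0=\cL_0\cM_0$ is a product of the unitary operators from \eqref{OLM}, hence unitary, so $\zeta_0$ is a conservative system. It therefore remains to prove (i) that $\zeta_0$ is simple and (ii) that the Schur parameters of its transfer function $\Theta$ are exactly $\{\Gamma_n\}_{n\ge 0}$.

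For (i), since $\zeta_0$ is conservative, simplicity is equivalent to the state space operator $\cT_0$ being completely non-unitary, i.e. to the span condition in \eqref{cu}. First I would read off the defect subspaces of $\cT_0$ from \eqref{kernels}: $\sD_{\cT_0^*}$ is the first coordinate subspace $\sD_{\Gamma_0}$, while $\sD_{\cT_0}$ sits inside the first $2\times2$ block $\sD_{\Gamma_0}\oplus\sD_{\Gamma^*_1}$. Using the three-diagonal band form of $\cT_0$ given by the blocks $\cA_n,\cB_n,\cC_n$ in \eqref{BLOKIT}, I would propagate these subspaces: the subdiagonal blocks $\cA_n$ carry the entry $D_{\Gamma_{2n}}D_{\Gamma_{2n-1}}$, and since $\cran D_{\Gamma_{2n-1}}=\sD_{\Gamma_{2n-1}}$ is exactly the domain of $D_{\Gamma_{2n}}$, this entry has dense range $\sD_{\Gamma_{2n}}$. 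Applying $\cT_0$ and its adjoint repeatedly to the defect subspaces then reaches the first coordinates $\sD_{\Gamma_{2n}}$ and the remaining coordinates $\sD_{\Gamma^*_{2n+1}}$ of every block, so together they span $\sH_0$ and $\cT_0$ is completely non-unitary. This band-propagation bookkeeping is one of the two technical cores.

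For (ii), I would argue recursively, shifting the choice sequence. The evaluation $\Theta(0)=\Gamma_0$ is immediate from the $(1,1)$-entry of $\cU_0$. The decisive step is to compute one of the Schur-iterate transformations of \eqref{RealFirst}, say $\Omega_{0,1}(\zeta_0(\{\Gamma_n\}))$, directly from the operators $\cT_0,\cF_0,\cG_0$. Its state space is $\ker D_{\cT_0^*}$, which by \eqref{kernels} equals $\sD_{\Gamma^*_1}\oplus\sD_{\Gamma_2}\oplus\sD_{\Gamma^*_3}\oplus\cdots=\wt\sH_0(\{\Gamma_{n+1}\}_{n\ge0})$; its feedthrough operator is $\Gamma_1$; and, using the factored forms \eqref{T0prod} and \eqref{WT0prod}, the compression $P_{\ker D_{\cT_0^*}}\cT_0\uphar\ker D_{\cT_0^*}$ together with the compressed control and observation operators should reproduce exactly the system $\wt\zeta_0(\{\Gamma_{n+1}\}_{n\ge0})$. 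Verifying this operator identification block by block is the main obstacle, since it requires untangling how the orthogonal projection onto $\ker D_{\cT_0^*}$ interacts with the product $\cL_0\cM_0$.

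Once this identification is in hand the conclusion follows cleanly. Because $\zeta_0(\{\Gamma_n\})$ is simple conservative by part (i), the result quoted around \eqref{RealFirst} guarantees that $\Omega_{0,1}(\zeta_0(\{\Gamma_n\}))$ realizes the first Schur iterate $\Theta_1$ of $\Theta$; on the other hand it equals $\wt\zeta_0(\{\Gamma_{n+1}\})$, which is unitarily equivalent to $\zeta_0(\{\Gamma_{n+1}\})$ and hence has the same transfer function. Writing $\Psi(\{\Gamma_n\})$ for the transfer function of $\zeta_0(\{\Gamma_n\})$, this says $\Psi(\{\Gamma_n\})(0)=\Gamma_0$ and that the first Schur iterate of $\Psi(\{\Gamma_n\})$ is $\Psi(\{\Gamma_{n+1}\})$. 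Since the shifted sequence is again a choice sequence of non-isometric, non-coisometric contractions, parts (i)--(ii) apply to it verbatim, so iterating $n$ times shows the $n$-th Schur iterate of $\Theta$ is $\Psi(\{\Gamma_{n+k}\}_{k\ge0})$ and therefore the $n$-th Schur parameter is its value at $0$, namely $\Gamma_n$. Alternatively, one may substitute $A=\cT_0$, $B=\cF_0$, $C=\cG_0$ into the explicit formulas \eqref{gamman} of Theorem \ref{ITERATES11}; the recursion is preferable since it avoids computing powers of $\cT_0$, and Theorem \ref{UNIQ} is not needed for this direction though it confirms consistency.
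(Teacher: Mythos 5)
Your proposal is correct in outline and shares the paper's computational core while organizing the argument differently in two places. The shared core is the identification of $\Omega_{0,1}(\zeta_0(\{\Gamma_n\}_{n\ge 0}))$ with the CMV system of the shifted sequence -- exactly the paper's ``main step'' \eqref{MainStep}, which the paper carries out using Proposition \ref{ranges} to compute $P_{\ker D_{\cT_0}}$ explicitly; you correctly flag this block-by-block verification as the real work. The differences are these. For simplicity, you invoke the span form of \eqref{cu} and propagate the defect subspaces \eqref{kernels} through the band; the paper instead uses the kernel-intersection form, identifying $\ker D_{\cT_0^{*k}}\cap\ker D_{\cT_0^{k}}$ with the tail space $\sH_{2k}=\sH_0(\{\Gamma_n\}_{n\ge 2k})$ via the fact that truncations of shifted CMV matrices are again of the same type, and then using $\bigcap_k\sH_{2k}=\{0\}$. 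Both are valid, but note that your propagation needs the superdiagonal blocks $\cC_n$ of \eqref{BLOKIT} as well as the $\cA_n$ (the adjoints of the $\cC_n$ carry $D_{\Gamma^*_{2n+1}}D_{\Gamma^*_{2n}}$, which is what reaches the $\sD_{\Gamma^*_{2n+1}}$ coordinates), and some care in organizing the induction so that every vector used stays inside the span; also, the paper's route yields identities such as $P_{\ker D_{\cT_0^*}}\cT_0\uphar\ker D_{\cT_0^*}=\wt\cT_0(\{\Gamma_n\}_{n\ge 1})$ that are reused verbatim in the Schur-parameter step, whereas your simplicity argument produces no reusable machinery. For the induction itself, the paper alternates $\zeta_{2k-1}=\Omega_{0,1}(\zeta_{2k-2})$, $\zeta_{2k}=\Omega_{1,0}(\zeta_{2k-1})$, which requires the companion computation for $\Omega_{1,0}$ on $\wt\cU_0$-type systems; you compute only $\Omega_{0,1}$ and reset to a $\cU_0$-type system through the unitary equivalence $\wt\zeta_0(\{\Gamma_{n+1}\})\cong\zeta_0(\{\Gamma_{n+1}\})$ coming from \eqref{MOV}--\eqref{MU} for the shifted (still admissible) choice sequence, recursing on the statement itself. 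That buys genuine economy -- one block computation instead of two -- at the price of making the induction hypothesis the full theorem for shifted sequences. Your closing remark is also accurate: Theorem \ref{UNIQ} is not needed here; it enters only when passing to Theorem \ref{cmvmod}.
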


\begin{proof}
 The main step is a proof that the systems $\Omega_{0,1}(\zeta_0)$ and
$\Omega_{1,0}(\wt\zeta_0)$ given by \eqref{RealFirst} take the form
\begin{equation}
\label{MainStep} \begin{array}{l}
 \Omega_{0,1}(\zeta_0)=\left\{\wt
U_0\left(\{\Gamma_n\}_{n\ge
1}\right),\sD_{\Gamma_0},\sD_{\Gamma^*_0},\wt\sH_0\left(\{\Gamma_n\}_{n\ge
1}\right)\right\},\\
\Omega_{1,0}(\wt\zeta_0)=\left\{U_0\left(\{\Gamma_n\}_{n\ge
1}\right),\sD_{\Gamma_0},\sD_{\Gamma^*_0},\sH_0\left(\{\Gamma_n\}_{n\ge
1}\right)\right\}.
\end{array}
\end{equation}
First of all we will prove that the systems $\zeta_0$ and
$\wt\zeta_0$ are simple.

Define the subspaces
\[
\sH_{2k-1}=\sum\limits_{n\ge
k}\bigoplus\begin{array}{l}\sD_{\Gamma^*_{2n-1}}\\\oplus\\\sD_{\Gamma_{2n}}\end{array},\;
\sH_{2k}=\sum\limits_{n\ge
k}\bigoplus\begin{array}{l}\sD_{\Gamma_{2n}}\\\oplus\\\sD_{\Gamma^*_{2n+1}}\end{array},\;k=1,2,\ldots
\]
Clearly,
$\sH_0\supset\sH_1\supset\sH_2\supset\cdots\supset\sH_m\supset\cdots.$
From \eqref{statespaces} it follows the equality
\[
 \bigcap\limits_{m\ge 0}\sH_m=\{0\}.
\]
 Let $\Gamma_{-1}=0:\sM\to\sN$. Then
$\sD_{\Gamma_{-1}}=\sM,$ $\sD_{\Gamma^*_{-1}}=\sN$. We can consider
$\cU_0$ as acting from $\sD_{\Gamma_{-1}}\oplus\sH_0$ onto
$\sD_{\Gamma^*_{-1}}\oplus\sH_0$ and $\wt\cU_0$ as acting from
$\sD_{\Gamma_{-1}}\oplus\wt\sH_0$ onto
$\sD_{\Gamma^*_{-1}}\oplus\wt\sH_0$. Fix $m\in\dN$ and define
\[
\Gamma^{(m)}_n=\Gamma_{n+m},\;n=-1,0,1,\ldots
\]
Then $\{\Gamma^{(m)}_n\}_{n\ge 0}=\{\Gamma_k\}_{k\ge m},$ and
\[
\begin{array}{l}
 \sH_{2k-1}=\wt\sH_0(\{\Gamma^{(2k-1)}_n\}_{n\ge
0})=\wt\sH_0\left(\{\Gamma_n\}_{n\ge 2k-1}\right),\\
\sH_{2k}=\sH_0(\{\Gamma^{(2k)}_n\}_{n\ge
0})=\sH_0\left(\{\Gamma_n\}_{n\ge 2k}\right).
\end{array}
\]
 Let
\[
\begin{array}{l}
\cW_{2k-1}=\wt \cU_0(\{\Gamma^{(2k-1)}_n\}_{n\ge
0})=\wt\cU_0(\{\Gamma_n\}_{n\ge
2k-1}):\begin{array}{l}\sD_{\Gamma_{2k-2}}\\\oplus\\\sH_{2k-1}\end{array}\to
\begin{array}{l}\sD_{\Gamma^*_{2k-2}}\\\oplus\\\sH_{2k-1}\end{array},\\
 \cW_{2k}=\cU_0(\{\Gamma^{(2k)}_n\}_{n\ge
0})=\cU_0(\{\Gamma_n\}_{n\ge
2k}):\begin{array}{l}\sD_{\Gamma_{2k-1}}\\\oplus\\\sH_{2k}\end{array}\to
\begin{array}{l}\sD_{\Gamma^*_{2k-1}}\\\oplus\\\sH_{2k}\end{array},\;k\ge
1.
 \end{array}
\]
Define the operators
\begin{equation}
\label{TR} \cT_{m}=P_{\sH_{m}}\cW_{m}\uphar \sH_{m},\;m=1,2,\ldots.
\end{equation}
Then
\begin{equation}
\label{TTTUNC} \begin{array}{l}
\cT_{2k-1}=\wt\cT_0(\{\Gamma^{(2k-1)}_n\}_{n\ge
0})=\wt\cT_0(\{\Gamma_n\}_{n\ge 2k-1}),\\
\cT_{2k}=\cT_0(\{\Gamma^{(2k)}_n\}_{n\ge
0})=\cT_0(\{\Gamma_n\}_{n\ge 2k}).
\end{array}
\end{equation}
From \eqref{kernels}, \eqref{kernels1}, \eqref{TR}, and
\eqref{TTTUNC} we get
\[
\ker D_{\cT^*_0}=\sH_1,\;\ker D_{\cT_1}=\sH_2,\dots,\ker
D_{\cT^*_{2k}}=\sH_{2k+1},\; \ker D_{\cT_{2k-1}}=\sH_{2k},\ldots.
\]
From \eqref{T0prod}, \eqref{WT0prod}, and \eqref{TTTUNC} it follows
that
\[
\begin{array}{l}
P_{\ker D_{\cT^*_0}}\cT_0\uphar\ker D_{\cT^*_0}=\cT_1, \;P_{\ker
D_{\cT_1}}\cT_1\uphar\ker D_{\cT_1}=\cT_2,\ldots,\\
P_{\ker D_{\cT_{2k-1}}}\cT_{2k-1}\uphar\ker
D_{\cT_{2k-1}}=\cT_{2k},\;P_{\ker D_{\cT^*_{2k}}}\cT_{2k}\uphar\ker
D_{\cT^*_{2k}}=\cT_{2k+1},\ldots.
\end{array}
\]
Thus,
\[
\begin{array}{l}
\sH_{2k-1}=\ker D_{\cT^{*k}_0}\cap\ker D_{\cT^{k-1}_0},\\
\sH_{2k}=\ker D_{\cT^{*k}_0}\cap\ker D_{\cT^{k}_0}.
\end{array}
\]
In notations of Section \ref{gener} the operators $\cT_{2k-1}$ and
$\cT_{2k}$ coincide with the operators $(\cT_0)_{k-1,k}$ and
$(\cT_0)_{k,k},$ respectively.
 From the definition
of $\sH_0$ we get
\[
\left(\bigcap\limits_{k\ge 1}\ker
D_{\cT^{*k}_0}\right)\bigcap\left(\bigcap\limits_{k\ge 1}\ker
D_{\cT^{k}_0}\right)= \bigcap\limits_{k\ge 1}\left( \ker
D_{\cT^{*k}_0}\cap\ker D_{\cT^{k}_0} \right)=\bigcap\limits_{k\ge
1}\sH_{2k}=\{0\}.
\]
So, the operators $\cT_0$, $\wt\cT_0$, and $\{\cT_{k}\}_{k\ge 1}$
are completely non-unitary. It follows that the conservative systems
\[
\zeta_0=\left\{\begin{bmatrix} \Gamma_0&\cG_0\cr \cF_0
&\cT_0\end{bmatrix};\sM,\sN,\sH\right\}\;\mbox{and}
\;\wt\zeta_0=\left\{\begin{bmatrix} \Gamma_0&\wt\cG_0\cr \wt\cF_0
&\wt\cT_0\end{bmatrix};\sM,\sN,\wt\sH_0\right\}
\]
are simple.

 The operators $\cW_m$ takes the following $2\times 2$
block operator matrix form
\[
\cW_{m}=\begin{bmatrix} \Gamma_{m}&\cG_{m}\cr \cF_{m}
&\cT_{m}\end{bmatrix}:\begin{array}{l}\sD_{\Gamma_{m-1}}\\\oplus\\\sH_{m}\end{array}\to
\begin{array}{l}\sD_{\Gamma^*_{m-1}}\\\oplus\\\sH_m\end{array},
\]
where
\[
\begin{array}{l}
\cG_{2k-1}=\wt\cG_0(\{\Gamma_n\}_{n\ge
2k-1})=\begin{bmatrix}D_{\Gamma^*_{2k-1}}&0&0&\ldots\end{bmatrix}:\sH_{2k-1}\to\sD_{\Gamma^*_{2k-2}},\\
\cG_{2k}=\cG_0(\{\Gamma_n\}_{n\ge
2k})=\begin{bmatrix}D_{\Gamma^*_{2k}}\Gamma_{2k+1}&D_{\Gamma^*_{2k}}D_{\Gamma^*_{2k+1}}&0&0&\ldots\end{bmatrix}:
\sH_{2k} \to \sD_{\Gamma^*_{2k-1}},\\
\cF_{2k-1}=\wt\cF_0(\{\Gamma_n\}_{n\ge
2k-1})=\begin{bmatrix}\Gamma_{2k} D_{\Gamma_{2k-1}}\cr
D_{\Gamma_2k}D_{\Gamma_{2k-1}}\cr0\cr
0\cr\vdots\end{bmatrix}:\sD_{\Gamma_{2k-2}}\to\sH_{2k-1},\\
\cF_{2k}=\cF_0(\{\Gamma_n\}_{n\ge 2k})=\begin{bmatrix}
D_{\Gamma_{2k}}\cr 0\cr0\cr\vdots
\end{bmatrix}:\sD_{\Gamma_{2k-1}}\to\sH_{2k}.
\end{array}
\]
Suppose that the system
\[
\zeta_0=\left\{\begin{bmatrix} \Gamma_0&\cG_0\cr \cF_0
&\cT_0\end{bmatrix};\sM,\sN,\sH\right\}
\]
has transfer function $\Psi(\lambda)$, i.e.,
\[
\Psi(\lambda)=\Gamma_0+\lambda\cG_0(I_{\sH_0}-\lambda\cT_0)^{-1}\cF_0.
\]
Then $\Psi(0)=\Gamma_0$. Let $\Psi_1(\lambda)$ be the first Schur
iterate of $\Psi.$ By \eqref{RealFirst} the transfer function of the
simple conservative system
\[
\Omega_{0,1}(\nu)=\left\{\begin{bmatrix}D^{-1}_{\Gamma^*_0}C(D^{-1}_{\Gamma_0}B^*)^*&D^{-1}_{\Gamma^*_0}C\uphar\ker
D_{A^*}\cr AP_{\ker D_A}D^{-1}_{A^*}B&P_{\ker D_{A^*}}A\uphar\ker
D_{A^*}\end{bmatrix};\sD_{\Gamma_0},\sD_{\Gamma^*_0},\ker
D_{A^*}\right\}
\]
is  the first Schur iterate of the transfer function of the simple
conservative system
\[
\nu=\left\{\begin{bmatrix}\Gamma_0&C\cr
B&A\end{bmatrix};\sM,\sN,\sH\right\}.
\]
We will construct the system $\zeta_1=\Omega_{0,1}(\zeta_0)$ from
the system $\zeta_0$. In our case
\[
\zeta_1=\Omega_{0,1}(\zeta_0)=\left\{\begin{bmatrix}D^{-1}_{\Gamma^*_0}\cG_0(D^{-1}_{\Gamma_0}\cF^*_0)^*&D^{-1}_{\Gamma^*_0}\cG_0\uphar\ker
D_{\cT^*_0}\cr \cT_0 P_{\ker D_{\cT_0}}D^{-1}_{\cT^*_0}\cF_0&P_{\ker
D_{\cT^*_0}}\cT_0\uphar\ker
D_{\cT^*_0}\end{bmatrix};\sD_{\Gamma_0},\sD_{\Gamma^*_0},\ker
D_{\cT^*_0}\right\}
\]
 Clearly,
\[
\begin{array}{l}
D^{-1}_{\Gamma^*_0}\cG_0=\begin{bmatrix}
\Gamma_1&D_{\Gamma^*_1}&0&0&\ldots\end{bmatrix}:\sH_0\to\sD_{\Gamma^*_0},\\
(D^{-1}_{\Gamma_0}\cF^*_0)^*= \begin{bmatrix}I_\sM\cr 0\cr
0\cr\vdots\end{bmatrix}:\sD_{\Gamma_0}\to\sH_0.
\end{array}
\]
Therefore,
\[
\begin{bmatrix}
\Gamma_1&D_{\Gamma^*_1}&0&0&\ldots\end{bmatrix}\begin{bmatrix}I_\sM\cr
0\cr 0\cr\vdots\end{bmatrix}=\Gamma_1.
\]
Thus, the first Schur parameter of $\Psi$ is equal to $\Gamma_1$.
From \eqref{kernels} it follows that $\ker D_{\cT^*_0}=\sH_1$ and
$\sD_{\cT^*_0}=D_{\Gamma_0}P_{\sD_{\Gamma_0}}$. Hence
\[
\sD^{-1}_{\cT^*_0}\cF_0=\begin{bmatrix} I_{\sD_{\Gamma_0}}\cr
0\cr0\cr\vdots
\end{bmatrix}:\sD_{\Gamma_0}\to\sH_0.
\]
As has been proved above
\[
P_{\ker D_{\cT^*_0}}\cT_0\uphar\ker D_{\cT^*_0}=\cT_1.
\]
Let $h\in\sD_{\Gamma_0}$. Let us find the projection $P_{\ker
D_{\cT_0}}h$.  According to \eqref{kernels} we have to find the
vectors $\f\in\sD_{\Gamma_1}$ and $\psi\in\sD_{\Gamma^*_0}$ such
that
\[
\begin{bmatrix} h\cr 0\end{bmatrix}=\begin{bmatrix}
D_{\Gamma_1}\f\cr-\Gamma_1\f\end{bmatrix}+\begin{bmatrix}
\Gamma^*_1\psi\cr D_{\Gamma^*_1}\psi\end{bmatrix}.
\]
We have
\[
\left\{
\begin{array}{l}
h=D_{\Gamma_1}\f+\Gamma^*_1\psi\\
\Gamma_1\f=D_{\Gamma^*_1}\psi.
\end{array}
\right.
\]
From the second equation and Proposition \ref{ranges} it follows
$\f=D_{\Gamma_1}g,$ $\psi=\Gamma_1 g$, where $g\in\sD_{\Gamma_0}$.
Therefore
\[
h=D^2_{\Gamma_1}g+\Gamma^*_1\Gamma_1 g,
\]
i.e., $g=h$. Hence
\[
P_{\ker D_{\cT_0}}\sD^{-1}_{\cT^*_0}\cF_0h= P_{\ker
D_{\cT_0}}h=\begin{bmatrix} D^2_{\Gamma_1}h\cr-\Gamma_1
D_{\Gamma_1}h\cr 0\cr 0\cr \vdots\end{bmatrix}.
\]
Now we get
\[
\begin{array}{l}
\cT_0 P_{\ker D_{\cT_0}}\sD^{-1}_{\cT^*_0}\cF_0h=\\
\quad=\begin{bmatrix}
D_{\Gamma_0}&-\Gamma^*_0\Gamma_1&-\Gamma^*_0D_{\Gamma^*_1}&0&0&0&0&0&\ldots&\ldots\cr
0&\Gamma_2D_{\Gamma_1}&-\Gamma_2\Gamma^*_1&D_{\Gamma^*_2}\Gamma_3&D_{\Gamma^*_2}D_{\Gamma^*_3}&0&0&0&\ldots&\ldots\cr
0&D_{\Gamma_2}D_{\Gamma_1}&-D_{\Gamma_2}\Gamma^*_1&-\Gamma^*_2\Gamma_3&-\Gamma^*_2D_{\Gamma^*_3}&0&0&0&\ldots&\ldots\cr
0&0&0&\Gamma_4D_{\Gamma_3}&-\Gamma_4\Gamma^*_3&D_{\Gamma^*_4}\Gamma_5&D_{\Gamma^*_4}D_{\Gamma^*_5}&0&\dots&\ldots\cr
0&0&0&D_{\Gamma_4}D_{\Gamma_3}&-D_{\Gamma_4}\Gamma^*_3&-\Gamma^*_4\Gamma_5&-\Gamma_4D_{\Gamma^*_5}&0&\ldots&\ldots\cr
0&0&0&0&0&\Gamma_6D_{\Gamma_5}&-\Gamma_6\Gamma^*_5&D_{\Gamma^*_6}\Gamma_7&D_{\Gamma^*_6}D_{\Gamma^*_7}&\ldots\cr
\vdots&\vdots&\vdots&\vdots&\vdots&\vdots&\vdots&\vdots&\vdots&\vdots
\end{bmatrix}\times\\
\times
\begin{bmatrix} D^2_{\Gamma_1}h\cr-\Gamma_1
D_{\Gamma_1}h\cr 0\cr 0\cr \vdots\end{bmatrix}=
\begin{bmatrix}0\cr\Gamma_2D_{\Gamma_1}h\cr
D_{\Gamma_2} D_{\Gamma_1}h\cr 0\cr 0\cr\vdots\end{bmatrix}\in\sH_1.
\end{array}
\]
Thus we get that $\zeta_1$ is of the form
\[
\zeta_1=\Omega_{0,1}(\zeta_0)=\left\{\begin{bmatrix}
\Gamma_{1}&\cG_{1}\cr \cF_{1}
&\cT_{1}\end{bmatrix};\sD_{\Gamma_{0}},\sD_{\Gamma^*_{0}},\sH_{1}\right\}=\left\{\wt
U_0\left(\{\Gamma_n\}_{n\ge
1}\right),\sD_{\Gamma_0},\sD_{\Gamma^*_0},\wt\sH_0\left(\{\Gamma_n\}_{n\ge
1}\right)\right\}.
\]
Similarly
\[
\Omega_{1,0}(\wt\zeta_0)=\left\{U_0\left(\{\Gamma_n\}_{n\ge
1}\right),\sD_{\Gamma_0},\sD_{\Gamma^*_0},\sH_0\left(\{\Gamma_n\}_{n\ge
1}\right)\right\}.
\]
 The transfer functions of these systems are equal to
$\Psi_1(\lambda)$ (see Section \ref{gener}), and $\Gamma_1$ is
exactly is the first Schur parameter of $\Psi(\lambda)$.

Let $\Psi_2(\lambda)$ is the second Schur iterate of $\Psi$.
Constructing the simple conservative system
$\zeta_2=\Omega_{1,0}(\zeta_1)$ of the form \eqref{RealFirst} with
the transfer function $\Psi_2$ we will get the system
\[
\zeta_2=\left\{\begin{bmatrix} \Gamma_{2}&\cG_{2}\cr \cF_{2}
&\cT_{2}\end{bmatrix};\sD_{\Gamma_{1}},\sD_{\Gamma^*_{1}},\sH_{2}\right\}=\left\{\cU_0(\{\Gamma_n\}_{n\ge
2});\sD_{\Gamma_1},\sD_{\Gamma^*_{1}},\sH_{0}(\{\Gamma_n\}_{n\ge
2})\right\}.
\]
Let $\Psi_m(\lambda)$ be the $m-$th Schur iterate of $\Psi$. Arguing
by induction we get that $\Psi_m(\lambda)$ is transfer function of
the system
\[
\begin{array}{l}
\zeta_m=\left\{\begin{bmatrix} \Gamma_{m}&\cG_{m}\cr \cF_{m}
&\cT_{m}\end{bmatrix};\sD_{\Gamma_{m-1}},\sD_{\Gamma^*_{m-1}},\sH_{m}\right\}=\\
\quad\quad=\left\{\begin{array}{l}
\left\{\wt\cU_0(\{\Gamma_n\}_{n\ge
2k-1});\sD_{\Gamma_{2k-2}},\sD_{\Gamma^*_{2k-2}},\wt\sH_0(\{\Gamma_n\}_{n\ge
2k-1})\right\},\; m=2k-1\\
\left\{\cU_0(\{\Gamma_n\}_{n\ge
2k});\sD_{\Gamma_{2k-1}},\sD_{\Gamma^*_{2k-1}},\sH_0(\{\Gamma_n\}_{n\ge
2k})\right\},\; m=2k
\end{array}
\right.
\end{array}
\]
for all $m$. Observe that
\[
\zeta_{2k-1}=\Omega_{0,1}(\zeta_{2k-2}),\;
\zeta_{2k}=\Omega_{1,0}(\zeta_{2k-1}),\; k\ge 1.
\]
 Thus, $\{\Gamma_n\}_{n\ge 0}$ are the Schur parameters
of $\Psi$.
\end{proof}
From Theorem \ref{CR} and Theorem \ref{UNIQ} we immediately arrive
at the following result.
\begin{theorem}
\label{cmvmod} Let $\Theta(\lambda)\in {\bf S}(\sM,\sN)$ and let
$\{\Gamma_n\}_{n\ge 0}$ be the Schur parameters of $\Theta$. Then
the systems \eqref{CMVMODEL} are simple conservative realizations of
$\Theta$. Moreover, for each natural number $k$ the $k$-th Schur
iterate $\Theta_k$ of $\Theta$ is the transfer function of the
simple conservative systems $\left\{\cU_0(\Gamma_n\}_{n\ge
k});\sD_{\Gamma_{k-1}},\sD_{\Gamma^*_{k-1}},\sH_0(\Gamma_n\}_{n\ge
k})\right\}$ and $\left\{\wt\cU_0(\Gamma_n\}_{n\ge
k});\sD_{\Gamma_{k-1}},\sD_{\Gamma^*_{k-1}},\wt\sH_0(\Gamma_n\}_{n\ge
k})\right\}$.
\end{theorem}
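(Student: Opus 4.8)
The plan is to read Theorem~\ref{cmvmod} as an immediate corollary of the two facts already in hand: the construction theorem (Theorem~\ref{CR}), which asserts that the block operator CMV systems built from a choice sequence are simple conservative and reproduce that sequence as their Schur parameters, and the uniqueness theorem (Theorem~\ref{UNIQ}), which asserts that a Schur class function is completely determined by its choice sequence. Throughout I keep the standing assumption of this section, namely that every $\Gamma_n$ is neither isometric nor co-isometric, so that the spaces $\sH_0$, $\wt\sH_0$ and the elementary rotations ${\bf J}_{\Gamma_n}$ entering \eqref{CMVMODEL} are all well defined; the degenerate cases are deferred to Section~\ref{REST}.

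For the first assertion I would start from the given $\Theta\in{\bf S}(\sM,\sN)$, let $\{\Gamma_n\}_{n\ge 0}$ be its Schur parameters, and feed this choice sequence into the construction \eqref{CMVMODEL}. By Theorem~\ref{CR} the resulting systems $\zeta_0$ and $\wt\zeta_0$ are simple conservative, they are unitarily equivalent (so share a common transfer function $\Psi$), and the Schur parameters of $\Psi$ are exactly $\{\Gamma_n\}_{n\ge 0}$. Thus $\Psi$ and $\Theta$ are two functions of class ${\bf S}(\sM,\sN)$ with the same choice sequence, and Theorem~\ref{UNIQ} forces $\Psi=\Theta$. Hence the systems \eqref{CMVMODEL} realize $\Theta$, and the first claim is proved.

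For the \emph{moreover} part the point is that the construction is self-similar under passing to Schur iterates. The $k$-th Schur iterate $\Theta_k$ lies in ${\bf S}(\sD_{\Gamma_{k-1}},\sD_{\Gamma^*_{k-1}})$, and straight from the definition of the operator Schur algorithm its own sequence of Schur parameters is the tail $\{\Gamma_n\}_{n\ge k}$: indeed $\Theta_k(0)=\Gamma_k$, and the $j$-th iterate of $\Theta_k$ coincides with $\Theta_{k+j}$, whence its $j$-th parameter is $\Gamma_{k+j}$. Applying the first assertion to $\Theta_k$ with the choice sequence $\{\Gamma_n\}_{n\ge k}$ then shows that both $\left\{\cU_0(\{\Gamma_n\}_{n\ge k});\sD_{\Gamma_{k-1}},\sD_{\Gamma^*_{k-1}},\sH_0(\{\Gamma_n\}_{n\ge k})\right\}$ and $\left\{\wt\cU_0(\{\Gamma_n\}_{n\ge k});\sD_{\Gamma_{k-1}},\sD_{\Gamma^*_{k-1}},\wt\sH_0(\{\Gamma_n\}_{n\ge k})\right\}$ are simple conservative realizations of $\Theta_k$; that both the $\cU_0$- and the $\wt\cU_0$-form serve equally well is guaranteed by the unitary equivalence coming from \eqref{MOV} and \eqref{MU}.

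The honest remark is that there is no serious obstacle left at this stage: all of the computational content---the five-diagonal shape of $\cU_0$, the identification of the truncations, and the inductive tracking of the systems $\zeta_m$ through the maps $\Omega_{0,1}$ and $\Omega_{1,0}$---has already been carried out in the proof of Theorem~\ref{CR}. The only things one must be careful about are the bookkeeping of the tail identity for the Schur parameters of $\Theta_k$ and the standing non-degeneracy hypothesis on the $\Gamma_n$; granting these, the statement follows by citing Theorems~\ref{CR} and~\ref{UNIQ}.
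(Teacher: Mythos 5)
Your proposal is correct and matches the paper's own route: the paper derives Theorem \ref{cmvmod} exactly as an immediate consequence of Theorem \ref{CR} (the constructed systems are simple conservative with the given Schur parameters) combined with the uniqueness statement of Theorem \ref{UNIQ}, with the \emph{moreover} part following by applying this to the tail sequence $\{\Gamma_n\}_{n\ge k}$, whose role as the Schur parameters of $\Theta_k$ is immediate from the Schur algorithm. Your bookkeeping of the non-degeneracy hypothesis and of the unitary equivalence between the $\cU_0$- and $\wt\cU_0$-forms is consistent with what the paper does.
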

Observe that in fact we have proved Theorem \ref{SchurAlg} and our
proof is different from given in \cite{Const} and \cite{BC}.
\begin{remark}
More complicated construction of the state Hilbert space and simple
conservative realization for a Schur function $\Theta\in{\bf
S}(\sM,\sN)$ by means of a block operator matrix are given in
\cite{KailBruck} (see \cite{BC}). These constructions also involve
Schur parameters of $\Theta$ and some additional Hilbert spaces and
operators. One more model based on the Schur parameters of a scalar
Schur class function $\Theta$ is obtained in \cite{D1}. In terms of
this model in \cite{D1} are established the necessary and sufficient
conditions in order to $\Theta$ has a meromorphic pseudocontinuation
of bounded type to the exterior of the unit disk. In recent preprint
\cite{FKatsKr} a construction of a minimal conservative realization
of a scalar finite Blaschke product in terms of the Hessenberg
matrix is given.
\end{remark}

\section{Block operator CMV matrices (the rest cases)}
\label{REST}
 Let $\{\Gamma_n\}$ be the Schur parameters of the function
$\Theta\in {\bf S}(\sM,\sN)$. Suppose  $\Gamma_m$ is an isometry
(respect., co-isometry, unitary) for some $m\ge 0$. Then
$\Theta_m(\lambda)=\Gamma_m$ for all $\lambda\in\dD$ and
\[
\begin{array}{l}
\Theta_{m-1}(\lambda)=\Gamma_{m-1}+\lambda
D_{\Gamma^*_{m-1}}\Gamma_m(I_{\sD_{\Gamma_{m-1}}}+\lambda\Gamma^*_{m-1}\Gamma_m)^{-1}D_{\Gamma_{m-1}},\\
\Theta_{m-2}(\lambda)=\Gamma_{m-2}+\lambda
D_{\Gamma^*_{m-2}}\Theta_{m-1}(\lambda)(I_{\sD_{\Gamma_{m-2}}}+\lambda\Gamma^*_{m-2}\Theta_{m-1}(\lambda))^{-1}D_{\Gamma_{m-2}},\\
\ldots \quad\ldots\quad \ldots\quad \ldots\quad \ldots \quad
\ldots\quad \ldots\quad \ldots\quad \ldots\quad \ldots\quad
\ldots\quad \ldots,\\
\Theta(\lambda)=\Gamma_{0}+\lambda
D_{\Gamma^*_{0}}\Theta_{1}(\lambda)(I_{\sD_{\Gamma_0}}+\lambda\Gamma^*_{0}\Theta_{1}(\lambda))^{-1}D_{\Gamma_{0}},\;
\lambda\in\dD.
\end{array}
\]
In this case the function $\Theta$ also is the transfer function of
the simple conservative systems constructed similarly to the
situation in Section \ref{BCMVR} by means of its Schur parameters
and corresponding block operator CMV matrices $\cU_0$ and
$\wt\cU_0$. Observe that
%\begin{enumerate} \item
if $\Gamma_m$ is isometric (respect., co-isometric) then
$\Gamma_n=0,$ $\sD_{\Gamma^*_n}=\sD_{\Gamma^*_{m}}$,
$D_{\Gamma^*_{n}}=I_{\sD_{\Gamma^*_{m}}}$ (respect.,
$\sD_{\Gamma_n}=\sD_{\Gamma_{m}}$,
$D_{\Gamma_{n}}=I_{\sD_{\Gamma_{m}}}$) for $n> m$. The constructions
of the state spaces $\sH_0=\sH_0(\{\Gamma_n\}_{n\ge 0})$ and
$\wt\sH_0=\wt\sH_0(\{\Gamma_n\}_{n\ge 0})$ are similar to
\eqref{statespaces} but one have to replace $\sD_{\Gamma_n}$ by
$\{0\}$ (respect., $\sD_{\Gamma^*_n}$ by $\{0\}$) for $n\ge m$, and
$\sD_{\Gamma^*_n}$
 by $\sD_{\Gamma^*_m}$ (respect.,
$\sD_{\Gamma_n}$ by $\sD_{\Gamma_m}$) for $n> m$. The relation
\[
\wt\sH_0(\{\Gamma^*_n\}_{n\ge 0})=\sH_0(\{\Gamma_n\}_{n\ge 0})
\]
remains true.  If, in addition, the operator $\Gamma_m$ is isometry
($\iff$ the operator $D_{\Gamma^*_m}$ is the orthogonal projection
in $\sD_{\Gamma_{m-1}}$ onto $\ker\Gamma^*_m$) or co-isometry
($\iff$ the operator $D_{\Gamma_m}$ is the orthogonal projection in
$\sD_{\Gamma^*_{m-1}}$ onto $\ker\Gamma_m$), then the corresponding
unitary elementary rotation takes the the row or the column form
\[
\begin{array}{l}
{\bf
J}^{(r)}_{\Gamma_m}=\begin{bmatrix}\Gamma_m&I_{\sD_{\Gamma^*_m}}
\end{bmatrix}:\begin{array}{l}\sD_{\Gamma_{m-1}}\\\oplus\\\sD_{\Gamma^*_{m}}\end{array}\to\sD_{\Gamma^*_{m-1}},\\
{\bf J}^{(c)}_{\Gamma_m}=\begin{bmatrix}\Gamma_m\cr D_{\Gamma_m}
\end{bmatrix}:\sD_{\Gamma_{m-1}}\to
\begin{array}{l}\sD_{\Gamma^*_{m-1}}\\\oplus\\\sD_{\Gamma_{m}}\end{array}.
\end{array}
\]
Therefore, in definitions \eqref{OLM} of the block diagonal operator
matrices
\[
\cL_0=\cL_0(\{\Gamma_n\}_{n\ge 0}),\; \cM_0=\cM_0(\{\Gamma_n\}_{n\ge
0}),\;\mbox{and}\; \wt\cM_0=\wt\cM_0(\{\Gamma_n\}_{n\ge 0})
\]
one should replace
\begin{itemize}
\item ${\bf J}_{\Gamma_m}$ by ${\bf J}^{(r)}_{\Gamma_m}$
 and ${\bf J}_{\Gamma_n}$ by
$I_{\sD_{\Gamma^*_m}}$ for $n>m$, when $\Gamma_m$ is isometry,
\item ${\bf J}_{\Gamma_m}$ by ${\bf J}^{(c)}_{\Gamma_m}$,
 and ${\bf J}_{\Gamma_n}$ by
$I_{\sD_{\Gamma_m}}$ for $n>m$, when $\Gamma_m$ is co-isometry,
\item
${\bf J}_{\Gamma_m}$ by $\Gamma_m$, when $\Gamma_m$ is unitary.
\end{itemize}
As in Section \ref{BCMVR} in all these cases the block operators CMV
matrices $\cU_0=\cU_0(\{\Gamma_n\}_{n\ge 0})$ and
$\wt\cU_0=\wt\cU_0(\{\Gamma_n\}_{n\ge 0})$, are given by the
products
\[
\cU_0=\cL_0\cM_0,\;\wt\cU_0=\wt\cM_0\cL_0.
\]
These matrices are five block-diagonal. In the case when the
operator $\Gamma_m$, is unitary the block operator CMV matrices
$\cU_0$ and $\wt\cU_0$ are finite and otherwise they are
semi-infinite.

As in Section \ref{BCMVR} the truncated block operator CMV matrices
$\cT_0=\cT_0((\{\Gamma_n\}_{n\ge 0})$ and
$\wt\cT_0=\wt\cT_0(\{\Gamma_n\}_{n\ge 0})$ are defined by
\eqref{TRUNC} and \eqref{TRUNCT}
\[
\cT_0=P_{\sH_0}\cU_0\uphar\sH_0,\;\wt\cT_0=P_{\wt\sH_0}\wt\cU_0\uphar\wt\sH_0.
\]
As before the operators $\cT_0$ and $\wt\cT_0$ are unitarily
equivalent completely non-unitary contractions and, moreover, the
equalities \eqref{ADJ1}, \eqref{ADJ}, and Proposition \ref{COIUN}
hold true. Unlike  Section \ref{BCMVR} the operators given by
truncated block operator CMV matrices $\cT_m$ and $\wt\cT_m$
obtaining from $\cU_0$ and $\wt\cU_0$ by deleting first $m+1$ rows
and $m+1$ columns are
\begin{itemize}
\item
co-shifts of the form
\[
\cT_m=\wt\cT_m=\begin{bmatrix}0&I_{\sD_{\Gamma^*_m}}&0&0&\ldots\cr
0&0&I_{\sD_{\Gamma^*_m}}&0&\ldots\cr
0&0&0&I_{\sD_{\Gamma^*_m}}&\ldots\cr
\vdots&\vdots&\vdots&\vdots&\vdots
\end{bmatrix}:\begin{array}{l}\sD_{\Gamma^*_m}\\\oplus\\\sD_{\Gamma^*_m}\\\oplus\\\vdots
\end{array}\to \begin{array}{l}\sD_{\Gamma^*_m}\\\oplus\\\sD_{\Gamma^*_m}\\\oplus\\\vdots
\end{array},
\]
when $\Gamma_m$ is isometry,
\item
 the unilateral shifts of the form
\[
\cT_m=\wt\cT_m=\begin{bmatrix}0&0&0&0&\ldots\cr
I_{\sD_{\Gamma_m}}&0&0&0&\ldots\cr
0&I_{\sD_{\Gamma_m}}&0&0&\ldots\cr
\vdots&\vdots&\vdots&\vdots&\vdots
\end{bmatrix}:\begin{array}{l}\sD_{\Gamma_m}\\\oplus\\\sD_{\Gamma_m}\\\oplus\\\vdots
\end{array}\to \begin{array}{l}\sD_{\Gamma_m}\\\oplus\\\sD_{\Gamma_m}\\\oplus\\\vdots
\end{array},
\]
 when $\Gamma_m$ is co-isometry.
 \end{itemize}
One can see that Proposition \ref{COIUN} remains true.

Similarly to \eqref{CMVMODEL} let us consider the conservative
systems
\[
\zeta_0=\{\cU_0;\sM,\sN,\sH_0\},\;\wt\zeta_0=\{\wt\cU_0;\sM,\sN,\wt\sH_0\}.
\]
One can check that the systems $\zeta_0$ and $\wt\zeta_0$ are simple
and unitarily equivalent. Moreover, relations \eqref{MainStep} and,
therefore, Theorem \ref{CR} and Theorem \ref{cmvmod} remain valid
for a situations considered here.

In order to obtain precise forms of $\cU_0$ and $\wt\cU_0$ one can
consider the following cases:
\begin{enumerate}
\item $\Gamma_{2N}$ is isometric (co-isometric) for some $N$,
\item $\Gamma_{2N+1}$ is isometric (co-isometric) for some
$N$,
\item the operator $\Gamma_{2N}$ is unitary for some $N$,
\item the operator $\Gamma_{2N+1}$ is unitary for some $N$.
\end{enumerate}
We shall give several examples.
\begin{example}
\label{EX1}               {\bf The operator $\Gamma_{4}$ is
isometric.} Define the state spaces
\[
\begin{array}{l}
 \sH_0:=
\begin{array}{l}\sD_{\Gamma_{0}}\\\oplus\\\sD_{\Gamma^*_{1}}\end{array}\bigoplus
\begin{array}{l}\sD_{\Gamma_{2}}\\\oplus\\\sD_{\Gamma^*_{3}}\end{array}\bigoplus
\sD_{\Gamma^*_{4}}\bigoplus\sD_{\Gamma^*_{4}}\bigoplus\ldots\bigoplus\sD_{\Gamma^*_{4}}\bigoplus\ldots,\\
\wt\sH_0:=\begin{array}{l}\sD_{\Gamma^*_{0}}\\\oplus\\\sD_{\Gamma_{1}}\end{array}\bigoplus
\begin{array}{l}\sD_{\Gamma^*_{2}}\\\oplus\\\sD_{\Gamma_{3}}\end{array}\bigoplus
\sD_{\Gamma^*_{4}}\bigoplus\sD_{\Gamma^*_{4
}}\bigoplus\ldots\bigoplus\sD_{\Gamma^*_{4}}\bigoplus\ldots.
\end{array}
\]
Then the spaces $\sM\bigoplus\wt\sH_0$ and $\sN\bigoplus\sH_0$ can
be represented as follows
\[
\begin{array}{l}
 \sM\bigoplus\wt
\sH_0=\begin{array}{l}\sM\\\oplus\\\sD_{\Gamma^*_0}\end{array}\bigoplus
\begin{array}{l}\sD_{\Gamma_1}\\\oplus\\\sD_{\Gamma^*_2}\end{array}\bigoplus
\begin{array}{l}\sD_{\Gamma_3}\\\oplus\\\sD_{\Gamma^*_4}\end{array}
\bigoplus\sD_{\Gamma^*_4}\bigoplus\sD_{\Gamma^*_4}
\bigoplus\ldots,\\
\sN\bigoplus\sH_0=
\begin{array}{l}\sN\\\oplus\\\sD_{\Gamma_0}\end{array}\bigoplus
\begin{array}{l}\sD_{\Gamma^*_{1}}\\\oplus\\\sD_{\Gamma_2}\end{array}
 \bigoplus
 \sD_{\Gamma^*_{3}}\bigoplus\sD_{\Gamma^*_{4}}\bigoplus\ldots\bigoplus\sD_{\Gamma^*_{4}}\bigoplus\ldots.
\end{array}
 \]
 Define the unitary operators
\[
\cM_0= I_\sM\bigoplus{\bf J}_{\Gamma_{1}}\bigoplus {\bf
J}_{\Gamma_{3}}\bigoplus I_{\sD_{\Gamma^*_{4}}}\bigoplus
I_{\sD_{\Gamma^*_{4}}}\bigoplus\ldots :\sM\bigoplus\sH_0\to
\sM\bigoplus\wt\sH_0,
\]
\[
\wt\cM_0= I_\sN\bigoplus{\bf J}_{\Gamma_{1}}\bigoplus {\bf
J}_{\Gamma_{3}}\bigoplus I_{\sD_{\Gamma^*_{4}}}\bigoplus
I_{\sD_{\Gamma^*_{4}}}\bigoplus\ldots
:\sN\bigoplus\sH_0\to\sN\bigoplus\wt\sH_0,
\]
\[
\cL_0= {\bf J}_{\Gamma_0}\bigoplus{\bf J}_{\Gamma_2}\bigoplus{\bf
J}^{(r)}_{\Gamma_4}\bigoplus I_{\sD_{\Gamma^*_{4}}} \bigoplus
I_{\sD_{\Gamma^*_{4}}}\bigoplus
\ldots:\sM\bigoplus\wt\sH_0\to\sN\bigoplus\sH_0.
\]
Then
\[
\cU_0=\cL_0\cM_0=\begin{bmatrix}\Gamma_0&D_{\Gamma^*_0}\Gamma_1&
D_{\Gamma^*_0}D_{\Gamma^*_1}&0&0&0&0&0&0&\ldots\cr
D_{\Gamma_0}&-\Gamma^*_0\Gamma_1&-\Gamma^*_0D_{\Gamma^*_1}&0&0&0&0&0&0&\ldots\cr
0&\Gamma_2D_{\Gamma_1}&-\Gamma_2\Gamma^*_1&D_{\Gamma^*_2}\Gamma_3&D_{\Gamma^*_2}D_{\Gamma^*_3}&0&0&0&0&\ldots\cr
0&D_{\Gamma_2}D_{\Gamma_1}&-D_{\Gamma_2}\Gamma^*_1&-\Gamma^*_2\Gamma_3&-\Gamma^*_2D_{\Gamma^*_3}&0&0&0&0&\ldots\cr
0&0&0&\Gamma_4D_{\Gamma_3}&-\Gamma_4\Gamma^*_3&I_{\sD_{\Gamma^*_4}}&0&0&0&\ldots\cr
0&0&0&0&0&0&I_{\sD_{\Gamma^*_4}}&0&0&\ldots\cr
0&0&0&0&0&0&0&I_{\sD_{\Gamma^*_4}}&0&\ldots\cr
\vdots&\vdots&\vdots&\vdots&\vdots&\vdots&\vdots&\vdots&\vdots&\vdots
\end{bmatrix},
\]
\[
\wt\cU_0=\wt\cM_0\cL_0=\begin{bmatrix}\Gamma_0&D_{\Gamma^*_0}&
0&0&0&0&0&0&0&\ldots\cr
\Gamma_1D_{\Gamma_0}&-\Gamma_1\Gamma^*_0&D_{\Gamma^*_1}\Gamma_2&D_{\Gamma^*_1}D_{\Gamma^*_2}&0&0&0&0&0&\ldots\cr
D_{\Gamma_1}D_{\Gamma_0}&-D_{\Gamma_1}\Gamma^*_0&-\Gamma^*_1\Gamma_2&-\Gamma_1D_{\Gamma^*_2}
&0&0&0&0&0&\ldots\cr 0&0&\Gamma_3
D_{\Gamma_2}&-\Gamma_3\Gamma^*_2&D_{\Gamma^*_3}\Gamma_4&D_{\Gamma^*_3}&0&0&0&\ldots\cr
0&0&D_{\Gamma_3}D_{\Gamma_2}&-D_{\Gamma_3}\Gamma^*_2&-\Gamma^*_3\Gamma_4&-\Gamma^*_3
&0&0&0&\ldots\cr 0&0&0&0&0&0&I_{\sD_{\Gamma^*_4}}&0&0&\ldots\cr
0&0&0&0&0&0&0&I_{\sD_{\Gamma^*_4}}&0&\ldots\cr
\vdots&\vdots&\vdots&\vdots&\vdots&\vdots&\vdots&\vdots&\vdots&\vdots
\end{bmatrix},
\]
\end{example}

\begin{example}               {\bf The operator $\Gamma_{0}$ is co-isometric}. Then
\[
\sH_0=\wt\sH_0=\sum\limits_{n=0}^\infty\bigoplus\sD_{\Gamma_0},
\]
\[
\begin{array}{l}
\cM_0=I_\sM\bigoplus I_{\sD_{\Gamma_0}}\bigoplus
I_{\sD_{\Gamma_0}}\bigoplus\ldots,\\
\wt\cM_0=I_\sN\bigoplus I_{\sD_{\Gamma_0}}\bigoplus
I_{\sD_{\Gamma_0}}\bigoplus\ldots,\\
\cL_0={\bf J}^{(c)}_{\Gamma_0}\bigoplus I_{\sD_{\Gamma_0}}\bigoplus
I_{\sD_{\Gamma_0}}\bigoplus\ldots,
\end{array}
\]
\[
\cU_0=\wt\cU_0=\begin{bmatrix}\Gamma_0&0&0&0&\ldots\cr
D_{\Gamma_0}&0&0&0&\ldots\cr
 0&I_{\sD_{\Gamma_0}}&0&0&\ldots\cr
0&0&I_{\sD_{\Gamma_0}}&0&\ldots\cr
\vdots&\vdots&\vdots&\vdots&\vdots
  \end{bmatrix}.
\]
\end{example}
\begin{example}
                          {\bf The operator $\Gamma_{2}$ is co-isometric}. In this case
\[
\begin{array}{l}
\sH_0=\begin{array}{l}\sD_{\Gamma_{0}}\\\oplus\\\sD_{\Gamma^*_{1}}\end{array}\bigoplus
\sD_{\Gamma_{2}}\bigoplus\sD_{\Gamma_{2}}\bigoplus\ldots\bigoplus\sD_{\Gamma_{2}}\bigoplus\ldots,\\
 \wt\sH_0=\begin{array}{l}\sD_{\Gamma^*_{0}}\\\oplus\\\sD_{\Gamma_{1}}\end{array}\bigoplus
\sD_{\Gamma_{2}}\bigoplus\sD_{\Gamma_{2}}\bigoplus\ldots\bigoplus\sD_{\Gamma_{2}}\bigoplus\ldots,
\end{array}
\]
\[
\begin{array}{l}
\cU_0=\cL_0\cM_0=\begin{bmatrix}{\bf J}_{\Gamma_0}\cr & {\bf
J}^{(c)}_{\Gamma_{2}}\cr &&  I_{\sD_{\Gamma_{2}}} \cr &&&
I_{\sD_{\Gamma_{2}}} \cr &&&&\ddots\end{bmatrix}
\begin{bmatrix}I_\sM\cr&{\bf
J}_{\Gamma_1}\cr &&I_{\sD_{\Gamma_{2}}}\cr
&&&I_{\sD_{\Gamma_{2}}}\cr &&&&\ddots
\end{bmatrix}=\\
=\begin{bmatrix}\Gamma_0&D_{\Gamma^*_0}\Gamma_1&
D_{\Gamma^*_0}D_{\Gamma^*_1}&0&0&0&\ldots\cr
D_{\Gamma_0}&-\Gamma^*_0\Gamma_1&-\Gamma^*_0D_{\Gamma^*_1}&0&0&0&\ldots\cr
0&\Gamma_2D_{\Gamma_1}&-\Gamma_2\Gamma^*_1&0&0&0&\ldots\cr
0&D_{\Gamma_2}D_{\Gamma_1}&-D_{\Gamma_2}\Gamma^*_1&0&0&0&\ldots\cr
0&0&0&I_{\sD_{\Gamma_2}}&0&0&\ldots\cr
0&0&0&0&I_{\sD_{\Gamma_2}}&0&\ldots\cr
\vdots&\vdots&\vdots&\vdots&\vdots&\vdots&\vdots
\end{bmatrix},
\end{array}
\]
\[
\begin{array}{l}
\wt\cU_0=\wt\cM_0\cL_0=\begin{bmatrix}I_\sM\cr&{\bf J}_{\Gamma_1}\cr
&&I_{\sD_{\Gamma_{2}}}\cr &&&I_{\sD_{\Gamma_{2}}}\cr &&&&\ddots
\end{bmatrix}
\begin{bmatrix}{\bf J}_{\Gamma_0}\cr & {\bf
J}^{(c)}_{\Gamma_{2}}\cr &&  I_{\sD_{\Gamma_{2}}} \cr &&&
I_{\sD_{\Gamma_{2}}} \cr &&&&\ddots\end{bmatrix}=\\
=\begin{bmatrix}\Gamma_0&D_{\Gamma^*_0}&0&0&0&0&\ldots\cr
\Gamma_1D_{\Gamma_0}&-\Gamma_1\Gamma^*_0&D_{\Gamma^*_1}\Gamma_2&0&0&0&\ldots\cr
D_{\Gamma_1}D_{\Gamma_0}&-D_{\Gamma_1}\Gamma^*_0&-\Gamma^*_1\Gamma_2&0&0&0&\ldots\cr
0&0&D_{\Gamma_2}&0&0&0&\ldots\cr
0&0&0&I_{\sD_{\Gamma_2}}&0&0&\ldots\cr
0&0&0&0&I_{\sD_{\Gamma_2}}&0&\ldots\cr
\vdots&\vdots&\vdots&\vdots&\vdots&\vdots&\vdots
\end{bmatrix}.
\end{array}
\]
\end{example}
\begin{example}
{\bf The operator $\Gamma_1$ is isometric}.
 In this case
\[
\begin{array}{l}
\sH_0=
\begin{array}{l}\sD_{\Gamma_{0}}\\\oplus\\\sD_{\Gamma^*_{1}}\end{array}\bigoplus
\sD_{\Gamma^*_{1}}\bigoplus\sD_{\Gamma^*_{1}}\bigoplus\ldots\bigoplus\sD_{\Gamma^*_{1}}\bigoplus\ldots,\\
  \wt \sH_0=\sD_{\Gamma^*_0}\bigoplus\sD_{\Gamma^*_1}\bigoplus\sD_{\Gamma^*_1}\bigoplus\ldots
  \bigoplus\sD_{\Gamma^*_1}\bigoplus\ldots.
\end{array}
\]
\[
\begin{array}{l}
\cU_0=\cL_0\cM_0=\begin{bmatrix}{\bf J}_{\Gamma_0}\cr &
I_{\sD_{\Gamma^*_{1}}} \cr && I_{\sD_{\Gamma^*_{1}}} \cr &&&
I_{\sD_{\Gamma^*_{1}}} \cr &&&&\ddots\end{bmatrix}
\begin{bmatrix}I_\sM\cr&{\bf
J}^{(r)}_{\Gamma_1}\cr &&I_{\sD_{\Gamma^*_{1}}}\cr
&&&I_{\sD_{\Gamma^*_{1}}}\cr &&&&\ddots
\end{bmatrix}=\\
=\begin{bmatrix}\Gamma_0&D_{\Gamma^*_0}\Gamma_1&
D_{\Gamma^*_0}&0&0&0&0&0&\ldots\cr
D_{\Gamma_0}&-\Gamma^*_0\Gamma_1&-\Gamma^*_0&0&0&0&0&0&\ldots\cr
0&0&0&I_{\sD_{\Gamma^*_1}}&0&0&0&0&\ldots\cr
0&0&0&0&I_{\sD_{\Gamma^*_1}}&0&0&0&\ldots\cr
0&0&0&0&0&I_{\sD_{\Gamma^*_1}}&0&0&\ldots \cr
\vdots&\vdots&\vdots&\vdots&\vdots&\vdots&\vdots&\vdots&\vdots
\end{bmatrix},
\end{array}
\]
\[
\begin{array}{l}
\wt\cU_0=\wt\cM\cL_0=\begin{bmatrix}I_\sN\cr&{\bf
J}^{(r)}_{\Gamma_1}\cr &&I_{\sD_{\Gamma^*_{1}}}\cr
&&&I_{\sD_{\Gamma^*_{1}}}\cr &&&&\ddots
\end{bmatrix}
\begin{bmatrix}{\bf J}_{\Gamma_0}\cr &
I_{\sD_{\Gamma^*_{1}}} \cr && I_{\sD_{\Gamma^*_{1}}} \cr &&&
I_{\sD_{\Gamma^*_{1}}} \cr &&&&\ddots\end{bmatrix}=\\
 =\begin{bmatrix}\Gamma_0&D_{\Gamma^*_0}&
0&0&0&0&0&0&\ldots\cr
\Gamma_1D_{\Gamma_0}&-\Gamma_1\Gamma^*_0&I_{\sD_{\Gamma^*_1}}&0&0&0&0&0&\ldots\cr
0&0&0&I_{\sD_{\Gamma^*_1}}&0&0&0&0&\ldots\cr
0&0&0&0&I_{\sD_{\Gamma^*_1}}&0&0&0&\ldots\cr
%0&0&0&0&0&I_{\sD_{\Gamma^*_1}}&0&0&\ldots \cr
\vdots&\vdots&\vdots&\vdots&\vdots&\vdots&\vdots&\vdots&\vdots
\end{bmatrix}.
\end{array}
\]
\end{example}

\begin{example}
{\bf The operator $\Gamma_3$ is isometric}.
\[
\begin{array}{l}
\sH_0=
\begin{array}{l}\sD_{\Gamma_{0}}\\\oplus\\\sD_{\Gamma^*_{1}}\end{array}\bigoplus
\begin{array}{l}\sD_{\Gamma_{2}}\\\oplus\\\sD_{\Gamma^*_{3}}\end{array}\bigoplus
\sD_{\Gamma^*_{3}}\bigoplus\sD_{\Gamma^*_{3}}\bigoplus\ldots\bigoplus\sD_{\Gamma^*_{3}}\bigoplus\ldots,\\
  \wt \sH_0=\begin{array}{l}\sD_{\Gamma^*_{0}}\\\oplus\\\sD_{\Gamma^*_{1}}\end{array}\bigoplus
 \sD_{\Gamma^*_2}\bigoplus\sD_{\Gamma^*_3}\bigoplus\sD_{\Gamma^*_3}\bigoplus\ldots
  \bigoplus\sD_{\Gamma^*_1}\bigoplus\ldots.
\end{array}
\]
\[
\begin{array}{l}
\cU_0=\cL_0\cM_0=\begin{bmatrix}{\bf J}_{\Gamma_0}\cr &{\bf
J}_{\Gamma_2}\cr && I_{\sD_{\Gamma^*_{3}}} \cr &&&
I_{\sD_{\Gamma^*_{3}}} \cr &&&& I_{\sD_{\Gamma^*_{3}}} \cr
&&&&&\ddots\end{bmatrix}
\begin{bmatrix}I_\sM \cr &{\bf
J}_{\Gamma_1}\cr&&{\bf J}^{(r)}_{\Gamma_3}\cr
&&&I_{\sD_{\Gamma^*_{3}}}\cr &&&&I_{\sD_{\Gamma^*_{3}}}\cr
&&&&&\ddots
\end{bmatrix}=\\
=\begin{bmatrix}\Gamma_0&D_{\Gamma^*_0}\Gamma_1&
D_{\Gamma^*_0}D_{\Gamma^*_1}&0&0&0&0&0&\ldots\cr
D_{\Gamma_0}&-\Gamma^*_0\Gamma_1&-\Gamma^*_0D_{\Gamma^*_1}&0&0&0&0&0&\ldots\cr
0&\Gamma_2D_{\Gamma_1}&-\Gamma_2\Gamma^*_1&D_{\Gamma^*_2}\Gamma_3&D_{\Gamma^*_2}&0&0&0&\ldots\cr
0&D_{\Gamma_2}D_{\Gamma_1}&-D_{\Gamma_2}\Gamma^*_1&-\Gamma^*_2\Gamma_3&-\Gamma^*_2&0&0&0&\ldots\cr
0&0&0&0&0&I_{\sD_{\Gamma^*_3}}&0&0&\ldots\cr
0&0&0&0&0&0&I_{\sD_{\Gamma^*_3}}&0&\ldots \cr
\vdots&\vdots&\vdots&\vdots&\vdots&\vdots&\vdots&\vdots&\vdots
\end{bmatrix},
\end{array}
\]
\[
\begin{array}{l}
\wt\cU_0=\wt\cM_0\cL_0=\begin{bmatrix}I_\sN \cr &{\bf
J}_{\Gamma_1}\cr&&{\bf J}^{(r)}_{\Gamma_3}\cr
&&&I_{\sD_{\Gamma^*_{3}}}\cr &&&&I_{\sD_{\Gamma^*_{3}}}\cr
&&&&&\ddots
\end{bmatrix}
\begin{bmatrix}{\bf J}_{\Gamma_0}\cr &{\bf
J}_{\Gamma_2}\cr && I_{\sD_{\Gamma^*_{3}}} \cr &&&
I_{\sD_{\Gamma^*_{3}}} \cr &&&& I_{\sD_{\Gamma^*_{3}}} \cr
&&&&&\ddots\end{bmatrix}=\\
=\begin{bmatrix}\Gamma_0&D_{\Gamma^*_0}& 0&0&0&0&0&0&\ldots\cr
\Gamma_1D_{\Gamma_0}&-\Gamma_1\Gamma^*_0&D_{\Gamma^*_1}\Gamma_2&D_{\Gamma^*_1}D_{\Gamma^*_2}&0&0&0&0&\ldots\cr
D_{\Gamma_1}D_{\Gamma_0}&-D_{\Gamma_1}\Gamma^*_0&-\Gamma^*_1\Gamma_2&-\Gamma^*_1D_{\Gamma^*_2}&0&0&0&0&\ldots\cr
0&0&\Gamma_3D_{\Gamma_2}&-\Gamma_3\Gamma^*_2&I_{\sD_{\Gamma^*_3}}&0&0&0&\ldots\cr
0&0&0&0&0&I_{\sD_{\Gamma^*_3}}&0&0&\ldots\cr
0&0&0&0&0&0&I_{\sD_{\Gamma^*_3}}&0&\ldots \cr
\vdots&\vdots&\vdots&\vdots&\vdots&\vdots&\vdots&\vdots&\vdots
\end{bmatrix}.
\end{array}
\]

\end{example}
%\subsection
\begin{example}
{\bf The operator $\Gamma_{5}$ is co-isometric}.
\[
\begin{array}{l}
\sH_0=\begin{array}{l}\sD_{\Gamma_{0}}\\\oplus\\\sD_{\Gamma^*_{1}}\end{array}\bigoplus
\begin{array}{l}\sD_{\Gamma_{2}}\\\oplus\\\sD_{\Gamma^*_{3}}\end{array}
\bigoplus
\sD_{\Gamma_{4}}\bigoplus\sD_{\Gamma_{5}}\bigoplus\ldots\bigoplus\sD_{\Gamma_{5}}\bigoplus\ldots,\\
\wt\sH_0=\begin{array}{l}\sD_{\Gamma^*_{0}}\\\oplus\\\sD_{\Gamma_{1}}\end{array}\bigoplus
\begin{array}{l}\sD_{\Gamma^*_{2}}\\\oplus\\\sD_{\Gamma_{3}}\end{array}\bigoplus
\begin{array}{l}\sD_{\Gamma^*_{4}}\\\oplus\\\sD_{\Gamma_{5}}\end{array}\bigoplus
\sD_{\Gamma_{5}}\bigoplus\sD_{\Gamma_{5}}\bigoplus\ldots\bigoplus\sD_{\Gamma_{5}}\bigoplus\ldots,
\end{array}
\]
\[
\begin{array}{l}
\cL_0= {\bf J}_{\Gamma_0}\bigoplus {\bf J}_{\Gamma_2}\bigoplus{\bf
J}_{\Gamma_4} \bigoplus I_{\sD_{\Gamma_{5}}}\bigoplus
I_{\sD_{\Gamma_{5}}}\bigoplus\ldots,\\
 \cM_0=I_\sM\bigoplus{\bf
J}_{\Gamma_{1}}\bigoplus {\bf
J}_{\Gamma_{3}}\bigoplus{\bf J}^{(c)}_{\Gamma_{5}}%\begin{bmatrix}\Gamma_{2N+1}\crD_{\Gamma_{2N+1}} \end{bmatrix}
\bigoplus I_{\sD_{\Gamma_{5}}} \bigoplus
I_{\sD_{\Gamma_{5}}}\bigoplus \ldots,\\
\wt\cM_0=I_\sN\bigoplus{\bf J}_{\Gamma_{1}}\bigoplus {\bf
J}_{\Gamma_{3}}\bigoplus{\bf J}^{(c)}_{\Gamma_{5}}%\begin{bmatrix}\Gamma_{2N+1}\crD_{\Gamma_{2N+1}} \end{bmatrix}
\bigoplus I_{\sD_{\Gamma_{5}}} \bigoplus
I_{\sD_{\Gamma_{5}}}\bigoplus \ldots.
\end{array}
\]
\[
\cU_0=\cL_0\cM_0=\begin{bmatrix}\Gamma_0&D_{\Gamma^*_0}\Gamma_1&
D_{\Gamma^*_0}D_{\Gamma^*_1}&0&0&0&0&0&0&\ldots\cr
D_{\Gamma_0}&-\Gamma^*_0\Gamma_1&-\Gamma^*_0D_{\Gamma^*_1}&0&0&0&0&0&0&\ldots\cr
0&\Gamma_2D_{\Gamma_1}&-\Gamma_2\Gamma^*_1&D_{\Gamma^*_2}\Gamma_3&D_{\Gamma^*_2}D_{\Gamma^*_3}&0&0&0&0&\ldots\cr
0&D_{\Gamma_2}D_{\Gamma_1}&-D_{\Gamma_2}\Gamma^*_1&-\Gamma^*_2\Gamma_3&-\Gamma^*_2D_{\Gamma^*_3}&0&0&0&0&\ldots\cr
0&0&0&\Gamma_4D_{\Gamma_3}&-\Gamma_4\Gamma^*_3&D_{\Gamma^*_4}\Gamma_5&0&0&0&\ldots\cr
0&0&0&D_{\Gamma_4}D_{\Gamma_3}&-D_{\Gamma_4}\Gamma^*_3&-\Gamma^*_4\Gamma_5&0&0&0&\ldots\cr
 0&0&0&0&0&{D_{\Gamma_5}}&0&0&0&\ldots\cr
0&0&0&0&0&0&I_{\sD_{\Gamma_5}}&0&0&\ldots \cr
0&0&0&0&0&0&0&I_{\sD_{\Gamma_5}}&0&\ldots\cr
\vdots&\vdots&\vdots&\vdots&\vdots&\vdots&\vdots&\vdots&\vdots&\vdots
\end{bmatrix},
\]
\[
\wt\cU_0=\wt\cM_0\cL_0=\begin{bmatrix}\Gamma_0&D_{\Gamma^*_0}&
0&0&0&0&0&0&0&\ldots\cr
\Gamma_1D_{\Gamma_0}&-\Gamma_1\Gamma^*_0&D_{\Gamma^*_1}\Gamma_2&D_{\Gamma^*_1}D_{\Gamma^*_2}&0&0&0&0&0&\ldots\cr
D_{\Gamma_1}D_{\Gamma_0}&-D_{\Gamma_1}\Gamma^*_0&-\Gamma^*_1\Gamma_2&-\Gamma^*_1D_{\Gamma^*_2}&0&0&0&0&0&\ldots\cr
0&0&\Gamma_3D_{\Gamma_2}&-\Gamma_3\Gamma^*_2&-D_{\Gamma^*_3}\Gamma_4&D_{\Gamma^*_3}D_{\Gamma^*_4}&0&0&0&\ldots\cr
0&0&0&\Gamma_4D_{\Gamma_3}&-\Gamma_4\Gamma^*_3&D_{\Gamma^*_4}\Gamma_5&0&0&0&\ldots\cr
0&0&0&0&\Gamma_5D_{\Gamma_4}&-\Gamma_5\Gamma^*_4&0&0&0&\ldots\cr
 0&0&0&0&D_{\Gamma_5}D_{\Gamma_4}&-D_{\Gamma_5}\Gamma^*_4&0&0&0&\ldots\cr
0&0&0&0&0&0&I_{\sD_{\Gamma_5}}&0&0&\ldots \cr
0&0&0&0&0&0&0&I_{\sD_{\Gamma_5}}&0&\ldots\cr
\vdots&\vdots&\vdots&\vdots&\vdots&\vdots&\vdots&\vdots&\vdots&\vdots
\end{bmatrix}.
\]

\end{example}
%\subsection
\begin{example}
\label{even}{\bf The operator $\Gamma_{2N}$ is unitary}. In this
case
\[
\begin{array}{l}
\sH_0=\sum\limits_{n=0}^{N-1}
\bigoplus\begin{array}{l}\sD_{\Gamma_{2n}}\\\oplus\\\sD_{\Gamma^*_{2n+1}}\end{array},\\
\wt\sH_0=\sum\limits_{n=0}^{N-1}
\bigoplus\begin{array}{l}\sD_{\Gamma^*_{2n}}\\\oplus\\\sD_{\Gamma_{2n+1}}\end{array}
\end{array},
\]
\[
\cU_0=\begin{bmatrix}{\bf J}_{\Gamma_0}\cr &{\bf J}_{\Gamma_2}\cr &
&\ddots\cr &&& {\bf J}_{\Gamma_{2(N-1)}}\cr
&&&&\Gamma_{2N}\end{bmatrix}
\begin{bmatrix}I_\sM\cr&{\bf J}_{\Gamma_1}\cr &
&{\bf J}_{\Gamma_3}\cr & & &\ddots\cr &&&& {\bf
J}_{\Gamma_{2N-1}}\end{bmatrix},
\]
\[
\wt\cU_0=\begin{bmatrix}I_\sN\cr&{\bf J}_{\Gamma_1}\cr & &{\bf
J}_{\Gamma_3}\cr & & &\ddots\cr &&&& {\bf
J}_{\Gamma_{2N-1}}\end{bmatrix}\begin{bmatrix}{\bf J}_{\Gamma_0}\cr
&{\bf J}_{\Gamma_2}\cr & &\ddots\cr &&& {\bf J}_{\Gamma_{2(N-1)}}\cr
&&&&\Gamma_{2N}\end{bmatrix}.
\]
If $N=1$ ($\Gamma_2$ is unitary) then we have
\[
\cU_0=\begin{bmatrix}\Gamma_0&D_{\Gamma^*_0}\Gamma_1&
D_{\Gamma^*_0}D_{\Gamma^*_1}\cr
D_{\Gamma_0}&-\Gamma^*_0\Gamma_1&-\Gamma^*_0D_{\Gamma^*_1}\cr
0&\Gamma_2 D_{\Gamma_1}&-\Gamma_2\Gamma^*_1
\end{bmatrix},\;\wt\cU_0=\begin{bmatrix}\Gamma_0&D_{\Gamma^*_0}&0&\cr
\Gamma_1D_{\Gamma_0} &-\Gamma_1\Gamma^*_0&D_{\Gamma^*_1}\Gamma_2\cr
D_{\Gamma_1}D_{\Gamma_0}&-D_{\Gamma_1}\Gamma^*_0&-\Gamma^*_1\Gamma_2
\end{bmatrix}
\]
\end{example}

%\subsection
\begin{example}
\label{odd}{\bf The operator $\Gamma_{2N+1}$ is unitary}.
\[
\begin{array}{l}
\sH_0=\sD_{\Gamma_0},\;\wt\sH_0=\sD_{\Gamma^*_0}\;\mbox{if}\;N=0,\\
 \sH_0=\sum\limits_{n=0}^{N-1}
\bigoplus\begin{array}{l}\sD_{\Gamma_{2n}}\\\oplus\\\sD_{\Gamma^*_{2n+1}}\end{array}\bigoplus\sD_{\Gamma_{2N}},\;%\\
\wt\sH_0=\sum\limits_{n=0}^{N-1}
\bigoplus\begin{array}{l}\sD_{\Gamma^*_{2n}}\\\oplus\\\sD_{\Gamma_{2n+1}}\end{array}\bigoplus\sD_{\Gamma^*_{2N}}\;\mbox{if}\;
N\ge 1
\end{array},
\]
\[
\begin{array}{l}
\cU_0=\begin{bmatrix}\Gamma_0&D_{\Gamma^*_0}\cr
D_{\Gamma_0}&-\Gamma^*_0\end{bmatrix}\begin{bmatrix}I_\sM&0\cr
0&\Gamma_1
\end{bmatrix}=\begin{bmatrix}\Gamma_0&D_{\Gamma^*_0}\Gamma_1\cr D_{\Gamma_0}&-\Gamma^*_0\Gamma_1
\end{bmatrix},\\
\wt\cU_0=\begin{bmatrix}I_\sN&0\cr 0&\Gamma_1
\end{bmatrix}\begin{bmatrix}\Gamma_0&D_{\Gamma^*_0}\cr
D_{\Gamma_0}&-\Gamma^*_0\end{bmatrix}=\begin{bmatrix}\Gamma_0&D_{\Gamma^*_0}\cr
\Gamma_1D_{\Gamma_0}&-\Gamma_1\Gamma^*_0
\end{bmatrix} ,\;\mbox{if}\; N=0,
\end{array}
\]
\[
\cU_0=\begin{bmatrix}{\bf J}_{\Gamma_0}\cr &{\bf J}_{\Gamma_2}\cr &
&\ddots\cr &&& {\bf J}_{\Gamma_{2N}}
\end{bmatrix}
\begin{bmatrix}I_\sM\cr&{\bf J}_{\Gamma_1}\cr &
&{\bf J}_{\Gamma_3}\cr & & &\ddots\cr &&&& {\bf
J}_{\Gamma_{2N-1}}\cr &&&&&\Gamma_{2N+1}\end{bmatrix},
\]
\[
\wt\cU_0=\begin{bmatrix}I_\sN\cr&{\bf J}_{\Gamma_1}\cr & &{\bf
J}_{\Gamma_3}\cr & & &\ddots\cr &&&& {\bf J}_{\Gamma_{2N-1}}\cr
&&&&&\Gamma_{2N+1}\end{bmatrix}\begin{bmatrix}{\bf J}_{\Gamma_0}\cr
&{\bf J}_{\Gamma_2}\cr & &\ddots\cr &&& {\bf J}_{\Gamma_{2N}}
\end{bmatrix},\;\mbox{if}\; N\ge 1.
\]
If $N=1$ ($\Gamma_3$ is unitary) then
\[
\cU_0=\begin{bmatrix}\Gamma_0&D_{\Gamma^*_0}\Gamma_1&
D_{\Gamma^*_0}D_{\Gamma^*_1}&0\cr
D_{\Gamma_0}&-\Gamma^*_0\Gamma_1&-\Gamma^*_0D_{\Gamma^*_1}&0\cr
0&\Gamma_2
D_{\Gamma_1}&-\Gamma_2\Gamma^*_1&D_{\Gamma^*_2}\Gamma_3\cr
0&D_{\Gamma_2}D_{\Gamma_1}&-D_{\Gamma_2}\Gamma^*_1&-\Gamma^*_2\Gamma_3
\end{bmatrix},\;\wt\cU_0=\begin{bmatrix}\Gamma_0&D_{\Gamma^*_0}&0&0&\cr
\Gamma_1D_{\Gamma_0}
&-\Gamma_1\Gamma^*_0&D_{\Gamma^*_1}\Gamma_2&D_{\Gamma^*_1}D_{\Gamma^*_2}\cr
D_{\Gamma_1}D_{\Gamma_0}&-D_{\Gamma_1}\Gamma^*_0&-\Gamma^*_1\Gamma_2&-\Gamma^*_1D_{\Gamma^*_2}\cr
0&0&\Gamma_3 D_{\Gamma_2}&-\Gamma_3\Gamma^*_2
\end{bmatrix}.
\]
\end{example}
\section{Unitary operators with cyclic subspaces, dilations, and block operator CMV matrices}\label{DIL}

\subsection{Carath\'{e}odory class functions associated with conservative systems}
\begin{definition}
\label{Carath} Let $\sM$ be a separable Hilbert space. The class
${\bf C}(\sM)$ of $\bL(\sM)$-valued functions holomorphic on the
unit disk $\dD$ and having positive real part for all
$\lambda\in\dD$ is called the Carath\'{e}odory class.
\end{definition}

Consider a conservative systems $\tau =\left\{\begin{bmatrix} D&C
\cr B&A\end{bmatrix};\sM,\sM, \sH\right\}$ whose input and output
spaces coincide. Put
\[
\cH=\sM\oplus\sH
\]
 and let the
function $F_\tau(z)$ be defined as follows
\begin{equation}
\label{Carat} F_\tau(\lambda)=P_\sM(U_\tau+\lambda
I_\cH)(U_\tau-\lambda I_\cH)^{-1}\uphar\sM,\; \lambda\in\dD,
\end{equation}
where
\[
 U_\tau=\begin{bmatrix} D&C \cr B&A\end{bmatrix} :
\begin{array}{l} \sM \\\oplus\\ \sH \end{array} \to
\begin{array}{l} \sM \\\oplus\\ \sH \end{array}
\]
is unitary operator in $\cH$ associated with the system $\tau$. The
function $F_\tau(z)$ is holomorphic in $\dD$ and
\[
F_\tau(\lambda)+F^*_\tau(\lambda)=2(1-|\lambda|^2)P_\sM(U^*_\tau-\bar
\lambda I_\cH)^{-1}(U_\tau-\lambda I_\cH)^{-1}\uphar\sM.
\]
It follows that $F_\tau(\lambda)+F^*_\tau(\lambda)\ge 0$ for all
$\lambda\in \dD$.

The function $F_\tau(\lambda)$ defined by \eqref{Carat} belongs to
the Carath\'{e}odory class ${\bf C}(\sM)$ and, in addition,
$F_\tau(0)=I_\sM$. We also shall consider the function
\[
\wt
F_\tau(\lambda):=F^*_\tau(\bar\lambda)=P_\sM(I_\cH+\lambda
U_\tau)(I_\cH-\lambda U_\tau)^{-1}.
\]
 The
functions $F_\tau$ and $\wt F_\tau$ we will call the
Carath\'{e}odory functions associated with conservative system $\tau
=\left\{\begin{bmatrix} D&C \cr B&A\end{bmatrix};\sM,\sM,
\sH\right\}$.
\begin{proposition}
\label{CarSch} Let
\[
\tau =\left\{\begin{bmatrix} D&C \cr B&A\end{bmatrix};\sM,\sM,
\sH\right\}
\]
be a conservative system. Then the transfer function
$\Theta_\tau(\lambda)$ and the  Carath\'{e}odory function
$F_\tau(\lambda)$ are connected by the following relations
\begin{equation}
\label{FTh}
\begin{array}{l}
\Theta^*_\tau(\bar\lambda)=\cfrac{1}{\lambda}\,(F_\tau(\lambda)-I_\sM)(F_\tau(\lambda)+I_\sM)^{-1},\\
F_\tau(\lambda)=(I_\sM+\lambda\Theta^*_\tau(\bar\lambda))(I_\sM-\lambda\Theta^*_\tau(\bar\lambda))^{-1},\;
\lambda\in\dD.
\end{array}
\end{equation}
\end{proposition}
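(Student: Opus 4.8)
The plan is to reduce both identities to one computation of the compressed resolvent $P_\sM(I_\cH-\lambda U_\tau)^{-1}\uphar\sM$ and then to unwind a Cayley-type transform. Since $U_\tau$ is unitary and $|\lambda|<1$, the operator $I_\cH-\lambda U_\tau$ is boundedly invertible on all of $\dD$, and the same holds for $I_\sH-\lambda A$ because $\|\lambda A\|<1$. First I would solve $(I_\cH-\lambda U_\tau)(x\oplus y)=u\oplus 0$ with $u\in\sM$. The two block equations are
\[
(I_\sM-\lambda D)x-\lambda Cy=u,\qquad -\lambda Bx+(I_\sH-\lambda A)y=0 ,
\]
so the second gives $y=\lambda(I_\sH-\lambda A)^{-1}Bx$, and substituting into the first yields $\bigl(I_\sM-\lambda D-\lambda^2 C(I_\sH-\lambda A)^{-1}B\bigr)x=u$. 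Since $\lambda D+\lambda^2 C(I_\sH-\lambda A)^{-1}B=\lambda\Theta_\tau(\lambda)$, this proves
\[
P_\sM(I_\cH-\lambda U_\tau)^{-1}\uphar\sM=(I_\sM-\lambda\Theta_\tau(\lambda))^{-1},
\]
the inverse existing because $\|\lambda\Theta_\tau(\lambda)\|\le|\lambda|<1$.

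Next I would pass from the resolvent to $\wt F_\tau$. From $I_\cH+\lambda U_\tau=2I_\cH-(I_\cH-\lambda U_\tau)$ one gets $(I_\cH+\lambda U_\tau)(I_\cH-\lambda U_\tau)^{-1}=2(I_\cH-\lambda U_\tau)^{-1}-I_\cH$; compressing to $\sM$ and inserting the previous formula gives
\[
\wt F_\tau(\lambda)=2(I_\sM-\lambda\Theta_\tau(\lambda))^{-1}-I_\sM=(I_\sM+\lambda\Theta_\tau(\lambda))(I_\sM-\lambda\Theta_\tau(\lambda))^{-1},
\]
where the last equality uses that $I_\sM\pm\lambda\Theta_\tau(\lambda)$ commute with $(I_\sM-\lambda\Theta_\tau(\lambda))^{-1}$.

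To reach $F_\tau$ I would use $\wt F_\tau(\lambda)=F^*_\tau(\bar\lambda)$. Replacing $\lambda$ by $\bar\lambda$ in the last display and taking adjoints turns $\Theta_\tau(\bar\lambda)$ into $\Theta^*_\tau(\bar\lambda)$, reverses the two factors, and --- after restoring the order by the same commutativity --- produces
\[
F_\tau(\lambda)=(I_\sM+\lambda\Theta^*_\tau(\bar\lambda))(I_\sM-\lambda\Theta^*_\tau(\bar\lambda))^{-1},
\]
which is the second relation in \eqref{FTh}. Finally, setting $G=\Theta^*_\tau(\bar\lambda)$ and solving $F_\tau=(I_\sM+\lambda G)(I_\sM-\lambda G)^{-1}$ gives $F_\tau-I_\sM=\lambda(I_\sM+F_\tau)G$, whence $G=\tfrac1\lambda(F_\tau+I_\sM)^{-1}(F_\tau-I_\sM)=\tfrac1\lambda(F_\tau-I_\sM)(F_\tau+I_\sM)^{-1}$, the last step again by commutativity; this is the first relation.

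The computation is short, and the genuinely delicate points are purely bookkeeping: choosing the invertible block for the Schur complement ($I_\sH-\lambda A$ rather than $A-\lambda I_\sH$, which need not be invertible), carrying out the substitution $\lambda\mapsto\bar\lambda$ together with the adjoint that interchanges $\Theta_\tau$ and $\Theta^*_\tau(\bar\lambda)$ and reverses factor order, and invoking the commutativity of $I_\sM\pm\lambda\Theta$ (resp.\ $F_\tau\pm I_\sM$) to reconcile the orderings with the two stated forms. I expect this adjoint/$\bar\lambda$ bookkeeping to be the main, if mild, obstacle, since that is where a sign or ordering slip is most likely.
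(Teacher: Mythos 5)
Your proof is correct and follows essentially the same route as the paper: the paper applies the Schur--Frobenius block-inversion formula to $I_\cH-\lambda U_\tau$ to get $P_\sM(I_\cH-\lambda U_\tau)^{-1}\uphar\sM=(I_\sM-\lambda\Theta_\tau(\lambda))^{-1}$, then uses the same identity $F_\tau(\lambda)=-I_\sM+2P_\sM(I_\cH-\lambda U^*_\tau)^{-1}\uphar\sM$ to obtain the second relation of \eqref{FTh}. The only (immaterial) differences are that the paper takes adjoints at the level of the compressed resolvent rather than at the final formula, and that it leaves the Cayley/M\"obius inversion giving the first relation implicit, which you spell out.
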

\begin{proof} We use the well known Schur--Frobenius formula for the inverse
of block operators.  Let $\Phi$ be a bounded linear operator given
by the block operator matrix
\[
\Phi=\begin{pmatrix}X&Y \cr Z&W \end{pmatrix}:
\begin{array}{l} \sM \\\oplus\\ \sH \end{array} \to
\begin{array}{l} \sM \\\oplus\\ \sH \end{array}.
\]
 Suppose that
$W^{-1}\in\bL(\sH)$ and $(X-YW^{-1}Z)^{-1}\in\bL(\sM)$. Then
$\Phi^{-1}\in \bL(\sM\oplus\sH,\sM\oplus\sH)$ and
\[
\Phi^{-1}=\begin{pmatrix}K^{-1}& -K^{-1}YW^{-1}\cr
-W^{-1}ZK^{-1}&W^{-1}+W^{-1}ZK^{-1}YW^{-1}
\end{pmatrix},
\]
where $K=X-YW^{-1}Z$. Applying this formula for
\[
\Phi=I_\cH-\lambda U_\tau=\begin{pmatrix}I_\sM-\lambda D&-\lambda
C\cr-\lambda B&I_\sH-\lambda A\end{pmatrix}, \quad \lambda\in\dD,
\]
we get $K=I_\sM-\lambda D-\lambda^2C(I_\sH-\lambda
A)^{-1}B=I_\sM-\lambda\Theta_\tau(\lambda)$. Therefore
\[
P_\sM(I_\cH- \lambda U_\tau)^{-1}\uphar\sM=(I_\sM-\lambda
\Theta_\tau(\lambda))^{-1}, \quad \lambda\in\dD.
\]
Hence
\[
P_\sM (I_\cH-\lambda U^*_\tau)^{-1}\uphar\sM=(I_\sM-\lambda
\Theta^*_\tau(\bar\lambda))^{-1}, \quad \lambda\in\dD.
\]
Since $U_\tau$ is unitary, from \eqref{Carat} we get
\[
\begin{array}{l}
F_\tau(\lambda)=P_\sM(I_\cH+\lambda U^*_\tau)(I_\cH-\lambda
U^*_\tau)^{-1}\uphar\sM=\\
=-I_\sM+2P_\sM(I_\cH-\lambda
U^*_\tau)^{-1}\uphar\sM=-I_\sM+2(I_\sM-\lambda
\Theta^*_\tau(\bar\lambda))^{-1}=\\
=(I_\sM+\lambda \Theta^*_\tau(\bar\lambda))(I_\sM-\lambda
\Theta^*_\tau(\bar\lambda))^{-1}, \quad \lambda\in\dD.
\end{array}
\]
\end{proof}
The following theorem is well known (see \cite{Br1}).
\begin{theorem}
\label{CarOper} Let $\sM$ be a separable Hilbert space and let
$F(\lambda)\in {\bf C}(\sM)$. Then
\begin{enumerate}
\item $F(\lambda)$
admits the integral representation
\[
F(\lambda)=\frac{1}{2}(F(0)-F^*(0))+\int\limits_{0}^{2\pi}\frac{e^{it}+\lambda}{e^{it}-\lambda}\,
d\Sigma(t),
\]
where $\Sigma(t)$ is a non-decreasing  and nonnegative
$\bL(\sM)$-valued function on $[0, 2\pi]$;
\item under the condition $F(0)=I_\sM$ there exists a Hilbert space $\cH$ containing $\sM$ as a
subspace, and a unitary operator $U$ in $\cH$ such that
\[
F(\lambda)=P_\sM(U+\lambda I_\cH)(U-\lambda I_\cH)^{-1}\uphar\sM;
\]
moreover, the pair $\{\cH,U\}$ can be chosen minimal in the sense
\[
\cspan\{U^n\sM,\; n\in\dZ\}=\cH.
\]
\end{enumerate}
\end{theorem}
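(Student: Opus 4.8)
The plan is to reduce both parts to the operator trigonometric moment problem and to solve that problem by a Kolmogorov--Naimark (GNS-type) construction, the unitary dilation of part (2) then appearing as the special case where the total mass of $\Sigma$ equals $I_\sM$. First I would expand $F(\lambda)=C_0+2\sum_{n\ge1}C_n\lambda^n$ with $C_0=F(0)$, and set $G_0:=\RE C_0=\tfrac12(C_0+C_0^*)$, $G_n:=C_n$ and $G_{-n}:=C_n^*$ for $n\ge1$. Reading off Fourier coefficients gives $\tfrac1{2\pi}\int_0^{2\pi}\RE F(re^{i\theta})e^{-ik\theta}\,d\theta=r^{|k|}G_k$, so testing the hypothesis $\RE F\ge0$ against a trigonometric polynomial $\xi(\theta)=\sum_n e^{in\theta}u_n$ (any finite family $u_n\in\sM$) yields $\sum_{m,n}r^{|m-n|}(G_{m-n}u_n,u_m)\ge0$; as this is a finite sum, letting $r\to1$ is immediate and produces positive definiteness of the operator Toeplitz kernel $K(m,n)=G_{m-n}$ on $\dZ$.

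On the finitely supported $\sM$-valued sequences on $\dZ$ I would then put $[u,v]:=\sum_{m,n}(G_{m-n}u_n,v_m)$, which by the positivity just proved is a semi-inner product; quotienting by its null space and completing gives a Hilbert space $\cH$. Because the kernel depends only on $m-n$, the bilateral shift $(Su)_k:=u_{k-1}$ preserves $[\,\cdot\,,\,\cdot\,]$ and descends to a unitary operator $U$ on $\cH$ with spectral measure $E$, $U=\int_0^{2\pi}e^{it}\,dE(t)$. Writing $J\colon\sM\to\cH$ for the map sending $\xi$ to the class of the sequence supported at $0$, a direct computation gives $J^*U^{-n}J=\int_0^{2\pi}e^{-int}\,d\Sigma(t)=G_n$ for the positive operator measure $\Sigma(\cdot):=J^*E(\cdot)J$. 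Using the expansion $\tfrac{e^{it}+\lambda}{e^{it}-\lambda}=1+2\sum_{k\ge1}e^{-ikt}\lambda^k$ and integrating termwise then yields $\int_0^{2\pi}\tfrac{e^{it}+\lambda}{e^{it}-\lambda}\,d\Sigma(t)=\RE C_0+2\sum_{k\ge1}C_k\lambda^k$, so adding the constant $\tfrac12(C_0-C_0^*)=\tfrac12(F(0)-F^*(0))$ restores $F(\lambda)$ and proves part (1).

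For part (2) the hypothesis $F(0)=I_\sM$ gives $G_0=I_\sM$, so $[J\xi,J\xi]=(\xi,\xi)$, the map $J$ is isometric, and identifying $\sM$ with $J\sM$ makes $J$ the inclusion and $J^*=P_\sM$; the constant term now vanishes and part (1) reads $F(\lambda)=P_\sM(U+\lambda I_\cH)(U-\lambda I_\cH)^{-1}\uphar\sM$, while minimality is automatic since the class of the sequence supported at $n$ equals $U^nJ\xi\in U^n\sM$ and these span a dense set, whence $\cspan\{U^n\sM,\,n\in\dZ\}=\cH$. I would also point out a derivation internal to the paper: the Cayley transform $(F-I_\sM)(F+I_\sM)^{-1}$ is contractive because $\RE F\ge0$ and vanishes at $0$, so by Schwarz's lemma $\Psi(\lambda):=\tfrac1\lambda(F(\lambda)-I_\sM)(F(\lambda)+I_\sM)^{-1}$ lies in the Schur class, $\Theta:=\Psi^*(\bar\lambda)$ admits a simple conservative realization $\tau=\{U_\tau;\sM,\sM,\sH\}$ by the results of Section~\ref{BCMVR}, Proposition~\ref{CarSch} returns $F_\tau=F$ with $U:=U_\tau$ unitary on $\cH=\sM\oplus\sH$, and simplicity of $\tau$ is exactly cyclicity of $\sM$ under $U_\tau$. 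The step that demands care in either route is the Naimark passage from the positive-definite kernel to the unitary $U$: one must check that $S$ sends null vectors to null vectors and is surjective, so that its extension is genuinely unitary and not merely isometric, and that compressing its spectral measure reproduces the moments $G_n$; the Herglotz positivity feeding the construction is, by contrast, elementary.
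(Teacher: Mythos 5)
Your proof is correct, but note that the paper itself offers no proof of this statement at all: it is quoted as ``well known'' with a reference to Brodski\u{\i} \cite{Br1}, so there is nothing internal to compare against. What you supply is the classical self-contained argument: Herglotz positivity of $\RE F$ on circles $|\lambda|=r$, passage to the limit $r\to 1$ to get positive definiteness of the operator Toeplitz kernel $G_{m-n}$, then the Kolmogorov--Naimark (GNS) construction producing $\cH$, the bilateral shift $U$, and the semi-spectral measure $\Sigma=J^*E(\cdot)J$, with part (2) as the case $G_0=I_\sM$ where $J$ becomes isometric and minimality comes from density of the finitely supported sequences. All the computations check out (the Fourier-coefficient identity $\tfrac1{2\pi}\int_0^{2\pi}\RE F(re^{i\theta})e^{-ik\theta}\,d\theta=r^{|k|}G_k$, the moment identity $J^*U^{-n}J=G_n$, the termwise integration of $\tfrac{e^{it}+\lambda}{e^{it}-\lambda}=1+2\sum_{k\ge1}e^{-ikt}\lambda^k$), and you rightly flag the only delicate points: that the shift maps null vectors to null vectors and is surjective, so its extension is unitary rather than merely isometric. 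Your second, ``internal'' route for part (2) -- Cayley transform plus Schwarz's lemma to get $\Psi\in{\bf S}(\sM,\sM)$, realization of $\Theta=\Psi^*(\bar\lambda)$ via Theorem \ref{cmvmod}, recovery of $F$ via Proposition \ref{CarSch}, and cyclicity via Proposition \ref{consmin1} -- is also valid and non-circular, since none of those results in the paper depend on Theorem \ref{CarOper}; it is arguably closer in spirit to how the paper would want the statement derived, though it only yields part (2), not the integral representation of part (1). The only cosmetic discrepancy is that the theorem states the result in terms of a non-decreasing operator-valued distribution function $\Sigma(t)$ on $[0,2\pi]$, while you produce an operator measure; setting $\Sigma(t):=J^*E([0,t])J$ bridges this trivially.
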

\begin{proposition}\cite{D1}.
\label{consmin1} The conservative system
\[
\tau=\left\{\begin{bmatrix}D&C \cr B&A
\end{bmatrix}; \sM,\sM,\sH\right\}
\]
is simple if and only if
\[
\cspan\{U^n_\tau \sM,\;n\in\dZ\}=\cH.
\]
%where
%\[
%\cU_\tau=\begin{bmatrix}D&C \cr B&A
%\end{bmatrix}:\begin{array}{l}\sM\\\oplus\\\sH\end{array}\to
%\begin{array}{l}\sM\\\oplus\\\sH\end{array}.
%\]
\end{proposition}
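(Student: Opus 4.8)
The plan is to identify the orthogonal complement of $\cK:=\cspan\{U^n_\tau\sM,\;n\in\dZ\}$ inside $\cH=\sM\oplus\sH$ with the subspace whose triviality characterizes simplicity in item (c) of Section \ref{secS}. First I would observe that, since $U_\tau$ is unitary and $\cK$ is the closed span of \emph{all} integer powers $U^n_\tau\sM$, the subspace $\cK$ is invariant under both $U_\tau$ and $U^*_\tau=U^{-1}_\tau$, hence it reduces $U_\tau$; consequently $\cK^\perp$ reduces $U_\tau$ as well. Because $\sM\subset\cK$ (take $n=0$), we have $\cK^\perp\subseteq\cH\ominus\sM=\sH$. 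Thus a vector $h\in\sH$ lies in $\cK^\perp$ if and only if $\langle h,U^n_\tau m\rangle=0$ for all $m\in\sM$ and all $n\in\dZ$, i.e.
\[
P_\sM U^n_\tau h=0,\quad n\in\dZ.
\]
For $n=0$ this holds trivially, so it remains to analyse the positive and the negative powers separately.

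The key step is a recursive computation. For $h\in\sH$ write
\[
\begin{bmatrix}\mu_n\cr\eta_n\end{bmatrix}:=U^n_\tau\begin{bmatrix}0\cr h\end{bmatrix},\quad \mu_n=P_\sM U^n_\tau h,\;\eta_n=P_\sH U^n_\tau h,
\]
so that $\mu_0=0$, $\eta_0=h$, and the block form of $U_\tau$ gives the recursion $\mu_{n+1}=D\mu_n+C\eta_n$, $\eta_{n+1}=B\mu_n+A\eta_n$. I would then show by induction that the vanishing $\mu_1=\dots=\mu_n=0$ forces $\eta_k=A^k h$ and $CA^k h=\mu_{k+1}=0$ for $k=0,\dots,n-1$, the terms $D\mu_k$ dropping out precisely because the earlier $\mu_k$ vanish; conversely, if $CA^kh=0$ for all $k\ge0$ then $\mu_n=0$ for all $n\ge1$. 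Hence
\[
P_\sM U^n_\tau h=0\ (\forall n\ge1)\iff h\in\bigcap_{k\ge0}\ker(CA^k)=(\sH^o)^\perp.
\]
Applying the same argument to $U^{-1}_\tau=U^*_\tau=\begin{bmatrix}D^*&B^*\cr C^*&A^*\end{bmatrix}$, which amounts to replacing $C$ by $B^*$ and $A$ by $A^*$, yields
\[
P_\sM U^{-n}_\tau h=0\ (\forall n\ge1)\iff h\in\bigcap_{k\ge0}\ker(B^*A^{*k})=(\sH^c)^\perp.
\]

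Combining the two equivalences, I would conclude that
\[
\cK^\perp=\Bigl(\bigcap_{k\ge0}\ker(CA^k)\Bigr)\cap\Bigl(\bigcap_{k\ge0}\ker(B^*A^{*k})\Bigr),
\]
so that $\cspan\{U^n_\tau\sM,\;n\in\dZ\}=\cH$ holds exactly when this intersection is $\{0\}$, which by characterization (c) of Section \ref{secS} is precisely the condition that $\tau$ be simple. The main obstacle is the inductive step in the middle paragraph: one must verify that the feedthrough contributions $D\mu_k$ do not spoil the clean correspondence between $P_\sM U^n_\tau h$ and the observability data $CA^kh$, and this succeeds only because the vanishing of the lower-order $\mu_k$ is propagated forward through the recursion. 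Everything else is bookkeeping with the reducing property of $\cK$ and the standard description of $(\sH^c)^\perp$ and $(\sH^o)^\perp$ recorded in Section \ref{secS}.
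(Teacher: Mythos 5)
Your proof is correct and follows essentially the same route as the paper: both identify the orthogonal complement of $\cspan\{U^n_\tau\sM,\;n\in\dZ\}$ with $\left(\bigcap_{k\ge0}\ker(CA^k)\right)\cap\left(\bigcap_{k\ge0}\ker(B^*A^{*k})\right)$ and invoke the kernel characterization of simplicity from Section \ref{secS}. The only difference is that you spell out the block-matrix induction showing $P_\sM U^n_\tau h=0$ for all $n$ forces $CA^kh=0$ and $B^*A^{*k}h=0$, a computation the paper states without detail.
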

\begin{proof}
Let $\tau$ be a simple conservative system. Suppose $h\in\cH$ and
$h$ is orthogonal to $U^n_\tau\sM$ for all $n\in\dZ$. Then the
vectors $U^{*n}_\tau h$ are orthogonal to $\sM$ in $\cH$ for all
$n\in\dZ$. It follows that $h\in\sH$ and
\begin{equation}
\label{SIM}
\begin{array}{l}
Ch=CAh=CA^2h=\ldots=CA^nh=\ldots=0,\\
B^*h=B^*A^*h=B^*A^{*2}h=\ldots B^*A^{*n}h=\ldots=0.
\end{array}
\end{equation}
Hence $h\in \left(\bigcap_{n\ge 0}\ker(CA^n)\right)\cap\left(\bigcap_{n\ge 0}\ker(B^*A^{*n})\right)$. Since $\tau$ is simple we get $h=0$, i.e.,
\[\cspan\{U^n_\tau \sM,\;n\in\dZ\}=\cH.
\]
Conversely, let $\cspan\{U^n_\tau \sM,\;n\in\dZ\}=\cH$. Suppose thar
relations \eqref{SIM} hold for some $h\in\sH$. Then $h\perp
U^n_\tau\sM$ for all $n\in\dZ$. Hence $h=0$ and $\tau$ is simple.
\end{proof}

\subsection{Unitary operators with cyclic subspaces}
Let $U$ be a unitary operator in a separable Hilbert space $\sK$ and
let $\sM$ be a subspace of $\sK$. Put $\sH=\sK\ominus\sM$. Then $U$
takes the block operator matrix form
\[
U=\begin{bmatrix}D&C\cr B&A  \end{bmatrix}:\begin{array}{l}
\sM\\\oplus\\\sH\end{array}\to \begin{array}{l}
\sM\\\oplus\\\sH\end{array}.
\]
Since $U$ is unitary, the system
\[
\eta=\left\{\begin{bmatrix}D&C \cr B&A
\end{bmatrix}; \sM,\sM,\sH\right\}
\]
is conservative. By Proposition \ref{consmin1} the system $\eta$ is
simple if and only if
\begin{equation}
\label{uncycle}
 \cspan\{U^n \sM,\;n\in\dZ\}=\sK.
\end{equation}
A subspace $\sM$ of $\sK$ is called \textit{cyclic} for $U$ if the
condition \eqref{uncycle} is satisfied.

Define the Carath\'{e}odory function
\[
F_\sM(\lambda)=P_\sM(U+\lambda I_\cH)(U-\lambda
I_\cH)^{-1}\uphar\sM,\; \lambda\in\dD
\]
and a Schur function
\[
E_\sM(\lambda)=\cfrac{1}{\lambda}\,(F_\sM(\lambda)-I_\sM)(F_\sM(\lambda)+I_\sM)^{-1},\;
\lambda\in\dD.
\]
According to Proposition \ref{CarSch} the transfer function
$\Theta(\lambda)$ of the system $\eta$ and the function
$E_\sM(\lambda)$ are connected by the relation
\[
\Theta(\lambda)=E^*_\sM(\bar\lambda),\; \lambda\in\dD.
\]
\begin{theorem}
\label{CYCLE}Let $U$ be a unitary operator in a separable Hilbert
space and let $\sM$ be a cyclic subspace for $U$. Then $U$ is
unitarily equivalent to the block operator CMV matrices
$\cU_0(\{\Gamma_n\}_{n\ge 0})$ and $\wt\cU_0(\{\Gamma_n\}_{n\ge 0})$
in the Hilbert spaces $\cH=\sM\oplus\sH_0(\{\Gamma_n\}_{n\ge 0})$
and $\wt\cH=\sM\oplus\wt\sH_0(\{\Gamma_n\}_{n\ge 0})$, respectively,
where $\{\Gamma_n\}_{n\ge 0}$ are the Schur parameters of the
function
\[
\Theta(\lambda)=\cfrac{1}{\lambda}\,(F^*_\sM(\bar\lambda)-I_\sM)(F^*_\sM(\bar\lambda)+I_\sM)^{-1}.
\]
\end{theorem}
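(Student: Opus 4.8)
The plan is to recognize the statement as an assembly of the realization theorem already proved together with the uniqueness of simple conservative realizations. First I would set $\sH=\sK\ominus\sM$ and write $U$ in the block form
\[
U=\begin{bmatrix}D&C\cr B&A\end{bmatrix}:\begin{array}{l}\sM\\\oplus\\\sH\end{array}\to\begin{array}{l}\sM\\\oplus\\\sH\end{array},
\]
so that $\eta=\left\{\begin{bmatrix}D&C\cr B&A\end{bmatrix};\sM,\sM,\sH\right\}$ is a conservative system. Since $\sM$ is cyclic, condition \eqref{uncycle} holds, and Proposition \ref{consmin1} yields that $\eta$ is simple. By Proposition \ref{CarSch} the transfer function of $\eta$ equals $E^*_\sM(\bar\lambda)$, which, by the definitions of $F_\sM$ and $E_\sM$, is exactly the function $\Theta(\lambda)=\frac{1}{\lambda}(F^*_\sM(\bar\lambda)-I_\sM)(F^*_\sM(\bar\lambda)+I_\sM)^{-1}$ of the statement; in particular the feedthrough operator of $\eta$ is $D=\Theta(0)$.

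Next I would invoke Theorem \ref{cmvmod}: letting $\{\Gamma_n\}_{n\ge 0}$ denote the Schur parameters of this $\Theta$, the systems $\zeta_0=\{\cU_0(\{\Gamma_n\}_{n\ge 0});\sM,\sM,\sH_0\}$ and $\wt\zeta_0=\{\wt\cU_0(\{\Gamma_n\}_{n\ge 0});\sM,\sM,\wt\sH_0\}$ of \eqref{CMVMODEL} are simple conservative realizations of $\Theta$. Their common feedthrough operator is $\Theta(0)=\Gamma_0$, which coincides with the $D$ of $\eta$, so all three colligations carry the same feedthrough block.

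The key step is then to pass from equality of transfer functions to unitary equivalence of the underlying unitary operators. As $\eta$, $\zeta_0$, and $\wt\zeta_0$ are simple conservative systems sharing the transfer function $\Theta$, the uniqueness-up-to-unitary-equivalence of simple conservative realizations recalled in Section \ref{secS} furnishes unitaries $V:\sH\to\sH_0$ and $\wt V:\sH\to\wt\sH_0$ realizing these equivalences in the sense of \eqref{UNSYS}. Because the feedthrough operators agree, \eqref{UNSYS} collapses to the single intertwining relations
\[
\begin{bmatrix}I_\sM&0\cr 0&V\end{bmatrix}U=\cU_0\begin{bmatrix}I_\sM&0\cr 0&V\end{bmatrix},\quad \begin{bmatrix}I_\sM&0\cr 0&\wt V\end{bmatrix}U=\wt\cU_0\begin{bmatrix}I_\sM&0\cr 0&\wt V\end{bmatrix}.
\]
Setting $W=I_\sM\oplus V:\sK\to\cH$ and $\wt W=I_\sM\oplus\wt V:\sK\to\wt\cH$, these become $WUW^{-1}=\cU_0$ and $\wt W U\wt W^{-1}=\wt\cU_0$, which is precisely the asserted unitary equivalence of $U$ with $\cU_0$ on $\cH=\sM\oplus\sH_0$ and with $\wt\cU_0$ on $\wt\cH=\sM\oplus\wt\sH_0$.

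I expect the only point requiring genuine care is the verification that the feedthrough block is common to all three systems, for it is exactly this that lets the system equivalence of \eqref{UNSYS} reduce to a block-diagonal intertwiner $I_\sM\oplus V$ and thereby upgrade equivalence of the colligations to equivalence of the full unitaries $U$ and $\cU_0$; everything else is a direct citation of Proposition \ref{consmin1}, Proposition \ref{CarSch}, Theorem \ref{cmvmod}, and the uniqueness statement of Section \ref{secS}. Alternatively, the $\wt\cU_0$-equivalence could be deduced from the $\cU_0$-equivalence through relation \eqref{MU}, which, since $\sM=\sN$ here forces $\cM_0=\wt\cM_0$, directly exhibits $\cU_0$ and $\wt\cU_0$ as unitarily equivalent.
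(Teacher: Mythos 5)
Your proposal is correct and follows essentially the same route as the paper's own proof: cyclicity gives simplicity of $\eta$ via Proposition \ref{consmin1}, Proposition \ref{CarSch} identifies the transfer function of $\eta$ with $\Theta$, Theorem \ref{cmvmod} supplies the simple conservative CMV realizations, and uniqueness of simple conservative realizations together with \eqref{UNSYS} yields the block-diagonal unitaries intertwining $U$ with $\cU_0$ and $\wt\cU_0$. Your explicit check that the feedthrough blocks agree (so the system equivalence collapses to $I_\sM\oplus V$), and the alternative passage from $\cU_0$ to $\wt\cU_0$ via \eqref{MU}, are details the paper leaves implicit but are consistent with its argument.
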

\begin{proof} Because $\sM$ is a cyclic subspace for $U$, the conservative system $\eta$ is simple.
  By Theorem \ref{cmvmod} the system $\eta$ is unitarily equivalent to the systems $\zeta_0$ and $\wt\zeta_0$ given by
\eqref{CMVMODEL}. From \eqref{UNSYS} it follows that $U$ is
unitarily equivalent to $\cU_0(\{\Gamma_n\}_{n\ge 0})$ and
$\wt\cU_0(\{\Gamma_n\}_{n\ge 0})$.
\end{proof}
Suppose that the cyclic subspace $\sM$ for unitary operator $U$ in
$\sK$ is one-dimensional. Let $\f\in\sM$, $||\f||=1,$ and let
$\mu(\zeta)=(\cE(\zeta)\f,\f)_\sK$, where $\cE(\zeta)$,
$\zeta\in\dT$, is the resolution of the identity for $U$.  Then the
scalar Carath\'{e}odory function $F(\lambda)$ is of the form
\[
F(\lambda)=\left((U+\lambda I_\cH)(U-\lambda
I_\cH)^{-1}\f,\f\right)_\sK=\int_{\dT}\frac{\zeta+\lambda}{\zeta-\lambda}\,d\mu(\zeta)\,,
\quad \lambda\in\dD.
\]
Thus, the function $F(\lambda)$ is associated with the probability
measure $\mu$ on $\dT$. The Schur function associated with $\mu$
\cite{Si1} is the function
\[
E(\lambda)=\cfrac{1}{\lambda}\,\cfrac{F(\lambda)-1}{F(\lambda)+1},\;\lambda\in\dD.
\]
By Geronimus theorem \cite{ger1} the Schur parameters
 of the function $E(\lambda)$ coincide with
Verblunsky coefficient $\{\alpha_n\}_{n\ge 0}$ of the measure $\mu$
(see \cite{Si1}). Let $\Theta(\lambda):=\overline{E(\bar\lambda)}$,
$\lambda\in\dD$ and let $\{\gamma_n\}_{n\ge 0}$ be the Schur
parameters of $\Theta$. Then $\bar\alpha_n=\gamma_n$ for all $n$ and
the CMV matrices $\cU_0=\cU_0(\{\gamma_n\}_{n\ge 0})$ and
$\wt\cU_0=\wt\cU_0(\{\gamma_n\}_{n\ge 0})$ coincide with the CMV
matrices $\cC$ and $\wt\cC$ given by \eqref{CMV11} and
\eqref{CMV12}, correspondingly. Observe that $\dim\sK=m$ $\iff$ the
function $E(\lambda)$ is the Blaschke product of the form
\[
E(\lambda)=e^{i\f}\prod_{k=1}^m \frac{\lambda-\lambda_k}{1-\bar
\lambda_k \lambda}.
\]

\subsection{Unitary dilations of a contraction} %\label{dilcontr}
Let $T$ be a contraction acting in a Hilbert space $H$. The unitary
operator $U$ in a Hilbert space $\cH$ containing $H$ as a subspace
is called the unitary dilation of $T$ if $T^n=P_H U^n$ for all
$n\in\dN$ \cite{SF}. Two unitary dilations $U$ in $\cH$ and $U'$ in
$\cH'$ of $T$ are called isomorphic if there exists a unitary
operator $W\in\bL(\cH,\cH')$ such that
\[
W\uphar H=I_H \quad\mbox{and}\quad WU=U'W.
\]
It is established in \cite{SF} that for every contraction $T$ in the
Hilbert space $H$ there exists a unitary dilation $U$ in a space $H$
such that $U$ is \textit{is minimal} \cite{SF}, i.e.,
\[
\cspan\{U^n H,\; n\in\dZ\}=\cH.
\]
Moreover, two minimal unitary dilations of $T$ are isomorphic
\cite{SF}. The minimal unitary dilation by means of the infinite
matrix form is constructed in \cite{SF} on the base of Sch\"affer
paper \cite{Sch}. Below we show that the minimal unitary dilations
can be given by the operator CMV matrices.
\begin{theorem}
\label{UnitDil} Let $T$ be a contraction in a Hilbert space $H$.
Define the Hilbert spaces
\begin{equation}
\label{SPA}
\begin{array}{l}
\sH_0=\begin{array}{l}\sD_T\\\oplus\\\sD_{T^*}\end{array}\bigoplus
\begin{array}{l}\sD_T\\\oplus\\\sD_{T^*}\end{array}\bigoplus\cdots,\\
\wt\sH_0=\begin{array}{l}\sD_{T^*}\\\oplus\\\sD_{T}\end{array}\bigoplus
\begin{array}{l}\sD_{T^*}\\\oplus\\\sD_{T}\end{array}\bigoplus\cdots,
\end{array}
\end{equation}
and the Hilbert spaces $\cH_0=H\oplus\sH_0$, and $\wt\cH_0=
H\oplus\wt\sH_0.$  Let
\[
{\bf J}_0=\begin{bmatrix}0& I_{\sD_{T^*}}\cr
I_{\sD_T}&0\end{bmatrix}:
\begin{array}{l}\sD_T\\\oplus\\\sD_{T^*}\end{array}\to\begin{array}{l}\sD_{T^*}\\\oplus\\\sD_{T}\end{array}
\]
Define operators
\begin{equation}
\label{OPER}
\begin{array}{l} \cM_0=I_H\bigoplus {\bf J}_0\bigoplus
 {\bf J}_0\bigoplus\cdots:\cH_0\to \wt\cH_0,\\
 \cL_0={\bf J}_T\bigoplus{\bf J}_0\bigoplus{\bf
 J}_0\bigoplus\cdots:\wt\cH_0\to  \cH_0,
\end{array}
 \end{equation}
and
\begin{equation}
\label{DILA} \cU_0=\cL_0\cM_0: \cH_0\to \cH_0,\;
\wt\cU_0=\cM_0\cL_0:\wt\cH_0\to \wt\cH_0.
\end{equation}
Then $\{\cH_0, \cU_0\}$ and $\{\wt\cH_0,\wt\cU_0\}$ are unitarily
equivalent minimal unitary dilations of the operator $T$.
\end{theorem}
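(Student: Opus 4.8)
The plan is to recognize the operators $\cU_0$ and $\wt\cU_0$ defined in \eqref{DILA} as the block operator CMV matrices attached to a very special choice sequence, and then to read off every required property from the results already established in Sections \ref{BCMVR}--\ref{DIL}. Concretely, set $\Gamma_0 := T\in\bL(H)$ and $\Gamma_n := 0\in\bL(\sD_{\Gamma_{n-1}},\sD_{\Gamma^*_{n-1}})$ for $n\ge 1$. First I would verify by induction that $\sD_{\Gamma_n}=\sD_T$ and $\sD_{\Gamma^*_n}=\sD_{T^*}$ for every $n$: indeed $\Gamma_n=0$ gives $D_{\Gamma_n}=I_{\sD_{\Gamma_{n-1}}}$ and $D_{\Gamma^*_n}=I_{\sD_{\Gamma^*_{n-1}}}$. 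Consequently the elementary rotation ${\bf J}_{\Gamma_n}$ collapses to the flip ${\bf J}_0$ of the statement for each $n\ge 1$, the state spaces \eqref{statespaces} become exactly the spaces $\sH_0,\wt\sH_0$ of \eqref{SPA}, and the factors \eqref{OLM} reduce to the operators $\cL_0$ and $\cM_0=\wt\cM_0$ of \eqref{OPER}. Hence $\cU_0=\cL_0\cM_0$ and $\wt\cU_0=\cM_0\cL_0$ are precisely $\cU_0(\{\Gamma_n\}_{n\ge 0})$ and $\wt\cU_0(\{\Gamma_n\}_{n\ge 0})$.

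Given this identification, unitarity is immediate (both are products of the unitaries $\cL_0$ and $\cM_0$), and by Theorem \ref{CR} (together with its extension to the degenerate cases in Section \ref{REST}) the conservative systems $\zeta_0=\{\cU_0;H,H,\sH_0\}$ and $\wt\zeta_0=\{\wt\cU_0;H,H,\wt\sH_0\}$ are simple and unitarily equivalent, their common transfer function $\Theta$ having Schur parameters $\{\Gamma_n\}_{n\ge 0}=\{T,0,0,\dots\}$. The next step is to identify $\Theta$: the constant function $\Theta(\lambda)\equiv T$ manifestly has $\{T,0,0,\dots\}$ as its Schur parameters (the M\"obius parameter of a constant vanishes, so every later iterate is the null function), so the uniqueness part, Theorem \ref{UNIQ}, forces $\Theta(\lambda)=T$ for all $\lambda\in\dD$.

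It remains to extract minimality and the dilation identity. Because $\zeta_0$ is a simple conservative system with input and output space $H$, Proposition \ref{consmin1} gives $\cspan\{\cU_0^n H:n\in\dZ\}=\cH_0$, which is exactly minimality of the dilation; the same argument applied to $\wt\zeta_0$ handles $\wt\cU_0$. For the dilation property I would use the resolvent formula established inside the proof of Proposition \ref{CarSch}, namely $P_H(I_{\cH_0}-\lambda\cU_0)^{-1}\uphar H=(I_H-\lambda\Theta(\lambda))^{-1}$ for small $\lambda$. Substituting $\Theta(\lambda)=T$ gives $\sum_{n\ge 0}\lambda^n P_H\cU_0^n\uphar H=(I_H-\lambda T)^{-1}=\sum_{n\ge 0}\lambda^n T^n$, and comparing coefficients yields $P_H\cU_0^n\uphar H=T^n$ for all $n\in\dN$, so $\cU_0$ is a unitary dilation of $T$; the same holds for $\wt\cU_0$. (Alternatively one can verify $P_H\cU_0^n\uphar H=T^n$ by direct computation, tracking how the $\sD_{T^*}$-coordinates shift toward $H$ while the $\sD_T$-coordinates shift away from it, so that a vector started in $H$ never acquires a $\sD_{T^*}$-component that could feed back into $H$.) Finally, $\cM_0\uphar H=I_H$ and $\wt\cU_0\cM_0=\cM_0\cL_0\cM_0=\cM_0\cU_0$, so $\cM_0$ is a unitary operator intertwining $\cU_0$ and $\wt\cU_0$ and fixing $H$; thus the two dilations are unitarily equivalent, indeed isomorphic.

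The genuinely load-bearing step is the first one --- the bookkeeping that identifies \eqref{DILA} with the CMV matrices of the choice sequence $\{T,0,0,\dots\}$ and confirms that the defect spaces stabilize --- since everything afterward is a direct appeal to Theorems \ref{CR}, \ref{cmvmod}, \ref{UNIQ} and Propositions \ref{consmin1}, \ref{CarSch}. The only real care is needed when $T$ is isometric, co-isometric, or unitary: then $\sD_T$ or $\sD_{T^*}$ degenerates, $\Gamma_0$ is no longer of the generic type, and one must invoke the row/column (or finite) versions of the CMV construction from Section \ref{REST} in place of Section \ref{BCMVR}; the spaces $\sH_0,\wt\sH_0$ and the operators $\cU_0,\wt\cU_0$ degenerate accordingly, but the chain of implications above is left unchanged.
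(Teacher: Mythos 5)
Your proposal is correct and follows essentially the same route as the paper's own proof: identify $\cU_0,\wt\cU_0$ as the block operator CMV matrices of the choice sequence $\{T,0,0,\dots\}$, invoke the realization results (Theorem \ref{cmvmod}, i.e.\ Theorems \ref{CR} and \ref{UNIQ}) to see the associated simple conservative systems have constant transfer function $T$, recover $T^n=P_H\cU_0^n\uphar H$ from Proposition \ref{CarSch}, and get minimality from Proposition \ref{consmin1}. Your added care about the degenerate (isometric/co-isometric/unitary $T$) cases and the explicit intertwining $\cM_0\cU_0=\wt\cU_0\cM_0$ with $\cM_0\uphar H=I_H$ are harmless refinements of details the paper leaves implicit.
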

\begin{proof}
Define the $\bL(H)$-valued function
\[
\wt F(\lambda)=(I_H+\lambda T)(I_H-\lambda T)^{-1},\; \lambda\in\dD.
\]
Then the function $\wt F$ belongs to the Carath\'{e}odory class
${\bf C}(H)$ and
\[
\Theta(\lambda)=T=\frac{1}{\lambda}(\wt F(\lambda)-I_H)(\wt
F(\lambda)+I_H)^{-1},\;\lambda\in\dD,
\]
belongs to the Schur class ${\bf S}(H,H)$. The Schur parameters of
$\Theta$ is the sequence
\[
\Gamma_0=T,\Gamma_n= 0\in {\bf S}(\sD_T,\sD_{T^*}), \; n\in \dN.
\]
Let $\sH_0$ and $\wt\sH_0$ be defined by \eqref{SPA},
$\cH_0=H\oplus\sH_0$, $\wt\cH_0=H\oplus\wt\sH_0$. Then the operators
$\cU_0$ and $\wt\cU_0$ defined by \eqref{DILA} are the block
operator CMV matrices constructed by means of the Schur parameters
of $\Theta$. Let $\zeta_0=\{\cH_0,\sM,\sM,\sH_0\}$ and
$\wt\zeta_0=\{\wt\cU_0,\sM,\sM,\wt\sH_0\}$ be the corresponding
conservative systems. By Theorem \ref{cmvmod} the systems $\zeta_0$
and $\wt\zeta_0$ are simple, unitary equivalent, and their transfer
functions are equal $\Theta$. By Proposition \ref{CarSch} we have
\[
\begin{array}{l}
(I_H+\lambda T)(I_H-\lambda T)^{-1}=\wt
F(\lambda)=(I_H+\lambda\Theta(\lambda))(I_H-\lambda\Theta(\lambda))^{-1}=\\
=P_H(I_{\cH_0}+\lambda \cU_0)(I_{\cH_0}-\lambda \cU_0)^{-1}\uphar H.
\end{array}
\]
Hence
\[
T^n=P_H\cU_0^n\uphar H, \;n=0,1,\ldots.
\]
Therefore $\cU_0$ is a unitary dilation of $T$ in $\cH_0$. By
Proposition \ref{consmin1} this dilation is minimal. Similarly the
operator $\wt\cU_0$ is a minimal unitary dilation of $T$ in
$\wt\cH_0.$
\end{proof}
Taking into account \eqref{OPER}, \eqref{BLOKIT}, and
\eqref{BLOKIWT} we obtain the following operator matrix forms for
minimal unitary dilations $\cU_0$ and $\wt\cU_0$:
\[
\cU_0=\begin{bmatrix} T&0&D_{T^*}&0&0&0&0&0&0&\ldots\cr
D_T&0&-T^*&0&0&0&0&0&0&\ldots\cr
0&0&0&0&I_{\sD_{T^*}}&0&0&0&0&\ldots\cr
0&I_{\sD_T}&0&0&0&0&0&0&0&\ldots\cr
0&0&0&0&0&0&I_{\sD_{T^*}}&0&0&\ldots\cr
0&0&0&I_{\sD_T}&0&0&0&0&0&\ldots\cr
0&0&0&0&0&0&0&0&I_{\sD_{T^*}}&\ldots\cr
\vdots&\vdots&\vdots&\vdots&\vdots&\vdots&\vdots&\vdots&\vdots&\vdots
\end{bmatrix},
\]
\[
\wt\cU_0=\begin{bmatrix} T&D_{T^*}&0&0&0&0&0&0&0&\ldots\cr
0&0&0&I_{\sD_{T^*}}&0&0&0&0&0&\ldots\cr
 D_T&-T^*&0&0&0&0&0&0&0&\ldots\cr
0&0&0&0&0&I_{\sD_{T^*}}&0&0&0&\ldots\cr
0&0&I_{\sD_T}&0&0&0&0&0&0&\ldots\cr
0&0&0&0&0&0&0&I_{\sD_{T^*}}&0&\ldots\cr
0&0&0&0&I_{\sD_T}&0&0&0&0&\ldots\cr
%0&0&0&0&0&0&0&0&0&I_{\sD_{T^*}}&\ldots\cr
\vdots&\vdots&\vdots&\vdots&\vdots&\vdots&\vdots&\vdots&\vdots&\vdots%&\vdots
\end{bmatrix}.
\]

\subsection{The Naimark dilation} \label{Nai} Let $\sM$ be a
separable Hilbert space. Denote by $\sB(\dT)$ the $\sigma$-algebra
of Borelian subsets of the unit circle $\dT=\{\xi\in\dC:|\xi|=1\}$.
Let $\mu$ be a $\bL(\sM)$-valued Borel measure on $\sB(\dT)$, i.e.,
\begin{enumerate}\def\labelenumi{\rm (\alph{enumi})}
\item for any $\delta\in\sB(\dT)$ the operator $\mu(\delta)$ is
nonnegative,
\item $\mu(\emptyset)=0,$
\item $\mu$ is $\sigma$-additive with respect to the strong operator
convergence.
\end{enumerate}
Denote by ${\bf M}(\dT,\sM)$ the set of all $\bL(\sM)$-valued Borel
measures.
\begin{definition}
 \label{Borel} \cite{Const1}, \cite{BC},\ \cite{D1}. Let $\mu\in {\bf M}(\dT,\sM)$ be a probability measure ( $\mu(\dT)=I_\sM$) and let the operators
 $\{S_n\}_{n\in\dZ}$ be the sequence of Fourier
 coefficients of $\mu$, i.e.,
 \[
S_n=\int\limits_{\dT} \xi^{-n}\mu(d\xi),\; n\in\dZ.
 \]
 A Naimark dilation of $\mu$ is a pair $\{\cH,\cU\}$, where $\cH$ is a
 separable Hilbert space containing $\sM$ as a subspace, $\cU$ is
 unitary operator in $\cH$ such that
 \[
S_n=P_\sM\cU^n\uphar\sM,\; n\in\dZ.
 \]
 A Naimark dilation is called minimal if
 \[
\cspan\{\cU^n\sM,\; n\in\dZ\}=\cH.
 \]
\end{definition}
\begin{proposition}
\label{two} \cite{Const1}, \cite{BC}, \cite{D1}. Let
$\{\cH_1,\cU_1\}$ and $\{\cH_2,\cU_2\}$ be two minimal Naimark
dilations of a probability measure $\mu\in {\bf M}(\dT,\sM)$. Then
there exists a unitary operator $\cW\in\bL(\cH_1,\cH_2)$ such that
\[
\cW\cU_1=\cU_2\cW\quad\mbox{and}\quad \cW\uphar\sM=I_\sM.
\]
\end{proposition}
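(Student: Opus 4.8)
The plan is to construct the intertwining unitary $\cW$ by hand on the dense linear manifolds supplied by minimality, using nothing beyond the defining moment identity $S_n=P_\sM\cU_j^n\uphar\sM$ for all $n\in\dZ$. First I would introduce the linear manifolds
\[
\cL_1=\span\{\cU_1^n m:\ n\in\dZ,\ m\in\sM\}\subset\cH_1,\qquad
\cL_2=\span\{\cU_2^n m:\ n\in\dZ,\ m\in\sM\}\subset\cH_2,
\]
which are dense in $\cH_1$ and $\cH_2$ respectively, precisely because both dilations are minimal, i.e. $\cspan\{\cU_j^n\sM,\ n\in\dZ\}=\cH_j$. On these generators I would \emph{define} $\cW$ by the rule $\cW(\cU_1^n m)=\cU_2^n m$ and extend it linearly; the whole proof reduces to showing that this prescription is well defined and isometric, after which a continuity argument finishes everything.

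The key computation, and the one point that requires genuine care, is the matching of inner products. For a finite combination $x=\sum_j\cU_1^{n_j}m_j\in\cL_1$ and $y=\sum_i\cU_1^{k_i}m_i'\in\cL_1$, using that $\cU_1$ is unitary and that each $m_i'\in\sM$, I would compute
\[
(x,y)_{\cH_1}=\sum_{j,i}\bl(\cU_1^{\,n_j-k_i}m_j,\,m_i'\br)_{\cH_1}
=\sum_{j,i}\bl(P_\sM\cU_1^{\,n_j-k_i}m_j,\,m_i'\br)_{\sM}
=\sum_{j,i}\bl(S_{n_j-k_i}m_j,\,m_i'\br)_{\sM}.
\]
The identical computation carried out in $\cH_2$ produces the same sum of moments $\sum_{j,i}(S_{n_j-k_i}m_j,m_i')_{\sM}$, because the very same Fourier coefficients $\{S_n\}_{n\in\dZ}$ govern the second dilation. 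Hence
\[
\bl(x,y\br)_{\cH_1}=\Bl(\sum_j\cU_2^{\,n_j}m_j,\ \sum_i\cU_2^{\,k_i}m_i'\Br)_{\cH_2}.
\]
Taking $y=x$ shows that if $x=0$ then its proposed image has zero norm, so $\cW$ is well defined on $\cL_1$; the general polarized identity then shows $\cW$ is isometric from $\cL_1$ onto $\cL_2$. I would stress that this step uses the two-sided moment condition ($n\in\dZ$, not merely $n\ge 0$), which is available exactly because the $\cU_j$ are unitary; this is where the argument would fail for a mere dilation by a non-unitary operator.

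Finally, since $\cL_1$ and $\cL_2$ are dense, the isometry extends uniquely to a unitary $\cW\in\bL(\cH_1,\cH_2)$. The intertwining relation holds on generators, $\cW\cU_1(\cU_1^n m)=\cW\cU_1^{n+1}m=\cU_2^{n+1}m=\cU_2\cW(\cU_1^n m)$, and therefore on all of $\cH_1$ by continuity, giving $\cW\cU_1=\cU_2\cW$. Taking $n=0$ yields $\cW m=m$ for every $m\in\sM$, that is $\cW\uphar\sM=I_\sM$. I expect no serious obstacle beyond the bookkeeping in the inner-product identity above; the only subtlety worth flagging is confirming that $\cW$ is \emph{onto} $\cH_2$ (so that it is unitary and not merely isometric), which follows because its range contains the dense manifold $\cL_2$ and is closed.
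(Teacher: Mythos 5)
Your proof is correct. Note, however, that the paper does not actually prove Proposition \ref{two}: it is stated with citations to Constantinescu, Bakonyi--Constantinescu, and Dubovoy, and is then used as a known fact (e.g., implicitly behind the uniqueness discussion surrounding Theorem \ref{NaiCMV}). So there is no in-paper argument to compare against; what you have written is the standard uniqueness proof from the cited literature, and it is complete. The essential points are all present and in the right order: the two-sided moment identity $S_n=P_\sM\cU_j^n\uphar\sM$, $n\in\dZ$, reduces every inner product of generators $\bl(\cU_1^{n}m,\cU_1^{k}m'\br)_{\cH_1}$ to $\bl(S_{n-k}m,m'\br)_\sM$, which is the same in both spaces; this single identity simultaneously gives well-definedness of $\cW$ on the span (apply it to the difference of two representations of the same vector) and isometry; minimality gives density of the two manifolds, hence a unitary extension whose range contains the dense manifold $\cL_2$; and the intertwining relation and $\cW\uphar\sM=I_\sM$ are checked on generators. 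Your two flagged subtleties are exactly the right ones: the argument needs $n\in\dZ$ rather than $n\ge 0$ (available because the $\cU_j$ are unitary), and surjectivity must be verified so that $\cW$ is unitary rather than merely isometric. One cosmetic remark: the appeal to polarization is unnecessary — you established the inner-product identity directly, and isometry is just the case $y=x$ of it, not a consequence derived from norms.
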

The minimal Naimark dilation is constructed by T.~Constantinescu in
\cite{Const1} by means of the infinite in both sides block operator
matrix whose entries depend on some choice sequence. Here we
construct the minimal Naimark dilations in the form of block
operator CMV matrices.

\begin{theorem}
\label{NaiCMV}Let $\sM$ be a separable Hilbert space and let $\mu\in
{\bf M}(\dT,\sM)$ be a probability measure. Define the functions
\[
F(\lambda)=\int\limits_\dT\frac{\xi+\lambda}{\xi-\lambda}\,\mu(d\xi),\;\lambda\in\dD,
\]
\[
E(\lambda)=\frac{1}{\lambda}\,(F(\lambda)-I_\sM)(F(\lambda)+I_\sM)^{-1}.
\]
Then $E(\lambda)$ belongs to the Schur class ${\bf S}(\sM,\sM)$. Let
$\{G_n\}_{n\ge 0}$ be the Schur parameters of $E$. Construct the
Hilbert spaces
\[
\sH_0=\sH_0(\{G_n\}_{n\ge 0}),\; \wt\sH_0=\wt\sH_0(\{G_n\}_{n\ge 0})
\]
 and the Hilbert spaces
\[
 \cH_0=\sM\oplus\sH_0,\;\wt\cH_0=\sM\oplus\wt\sH_0.
 \]
 Let
\[
\cU_0=\cU_0(\{G_n\}_{n\ge 0}),\; \wt\cU_0=\wt\cU_0(\{G_n\}_{n\ge 0})
\]
be the block operator CMV matrices constructing by means of
$\{G_n\}$. Then the pairs $\{\cH_0,\cU_0\}$ and
$\{\wt\cH_0,\wt\cU_0\}$ are unitarily equivalent minimal Naimark
dilations of the measure $\mu$.
\end{theorem}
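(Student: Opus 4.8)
The plan is to recognize $\cU_0$ and $\wt\cU_0$ as the state-space unitaries of \emph{simple conservative systems} realizing $E$, and then to read off the Fourier coefficients of $\mu$ from the Carath\'eodory functions associated with those systems. First I would record that $E\in{\bf S}(\sM,\sM)$: from $\mu(\dT)=I_\sM$ we get $F(0)=I_\sM$, while positivity of the Poisson kernel $\RE\frac{\xi+\lambda}{\xi-\lambda}=\frac{1-|\lambda|^2}{|\xi-\lambda|^2}\ge 0$ gives $\RE F(\lambda)\ge 0$ on $\dD$. Hence $F(\lambda)+I_\sM$ is boundedly invertible and the Cayley transform $(F(\lambda)-I_\sM)(F(\lambda)+I_\sM)^{-1}$ is a contractive holomorphic $\bL(\sM)$-valued function vanishing at the origin; dividing by $\lambda$ and applying Schwarz's lemma yields $E\in{\bf S}(\sM,\sM)$.

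Since $\{G_n\}_{n\ge0}$ are by construction the Schur parameters of $E$, Theorem \ref{cmvmod} (together with its extension to the degenerate cases treated in Section \ref{REST}) shows that the conservative systems $\zeta_0=\{\cU_0;\sM,\sM,\sH_0\}$ and $\wt\zeta_0=\{\wt\cU_0;\sM,\sM,\wt\sH_0\}$ are simple, unitarily equivalent, and share a transfer function whose Schur parameters are $\{G_n\}$; by the uniqueness Theorem \ref{UNIQ} this transfer function is precisely $E$. I would then invoke Proposition \ref{CarSch} to compute the associated Carath\'eodory function of $\zeta_0$, namely $\wt F_{\zeta_0}(\lambda)=(I_\sM+\lambda E(\lambda))(I_\sM-\lambda E(\lambda))^{-1}$. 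Substituting $\lambda E=(F-I_\sM)(F+I_\sM)^{-1}$ gives $I_\sM+\lambda E=2F(F+I_\sM)^{-1}$ and $I_\sM-\lambda E=2(F+I_\sM)^{-1}$, whence
\[
\wt F_{\zeta_0}(\lambda)=P_\sM(I_{\cH_0}+\lambda\cU_0)(I_{\cH_0}-\lambda\cU_0)^{-1}\uphar\sM=F(\lambda),\qquad\lambda\in\dD.
\]

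Expanding both sides in powers of $\lambda$, using $(I_{\cH_0}+\lambda\cU_0)(I_{\cH_0}-\lambda\cU_0)^{-1}=I_{\cH_0}+2\sum_{n\ge1}\lambda^n\cU_0^{\,n}$ together with the kernel expansion $F(\lambda)=I_\sM+2\sum_{n\ge1}\lambda^n S_n$, I would match coefficients to get $P_\sM\cU_0^{\,n}\uphar\sM=S_n$ for all $n\ge0$. For $n<0$, unitarity of $\cU_0$ and selfadjointness of $\mu(d\xi)$ give $P_\sM\cU_0^{\,-n}\uphar\sM=(P_\sM\cU_0^{\,n}\uphar\sM)^*=S_n^*=S_{-n}$, so $S_n=P_\sM\cU_0^{\,n}\uphar\sM$ for every $n\in\dZ$, and likewise for $\wt\cU_0$; thus both pairs are Naimark dilations of $\mu$. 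Minimality is immediate: $\zeta_0$ and $\wt\zeta_0$ are simple, so Proposition \ref{consmin1} yields $\cspan\{\cU_0^{\,n}\sM,\ n\in\dZ\}=\cH_0$ and $\cspan\{\wt\cU_0^{\,n}\sM,\ n\in\dZ\}=\wt\cH_0$. Finally, the two minimal Naimark dilations are unitarily equivalent by Proposition \ref{two}.

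I expect the only delicate points to be the justification of the termwise comparison of the two power series near $\lambda=0$ (convergence, invertibility of $I_{\cH_0}-\lambda\cU_0$, and the kernel expansion of $F$), and the careful handling of the degenerate situations of Section \ref{REST}, where the Schur parameter sequence terminates and the CMV matrices acquire row/column-truncated elementary rotations; in those cases one must confirm that Theorem \ref{cmvmod} still delivers a simple conservative realization of $E$, which is exactly what is asserted there.
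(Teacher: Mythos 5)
Your proposal is correct and takes essentially the same route as the paper: realize $E$ via Theorem \ref{cmvmod} as the transfer function of the simple conservative systems attached to $\cU_0$ and $\wt\cU_0$, pass to the associated Carath\'eodory function via Proposition \ref{CarSch} and the Cayley-transform identity to get $F(\lambda)=P_\sM(I_{\cH_0}+\lambda\cU_0)(I_{\cH_0}-\lambda\cU_0)^{-1}\uphar\sM$, match power-series coefficients to obtain $S_n=P_\sM\cU_0^n\uphar\sM$, and deduce minimality from simplicity via Proposition \ref{consmin1}. The only cosmetic difference is that the paper routes the computation through the adjoint system $\zeta_0^*=\left\{\cU_0^*;\sM,\sM,\sH_0\right\}$ and its Carath\'eodory function, while you work directly with $\wt F_{\zeta_0}$; since $(\cU_0^*+\lambda I_{\cH_0})(\cU_0^*-\lambda I_{\cH_0})^{-1}=(I_{\cH_0}+\lambda\cU_0)(I_{\cH_0}-\lambda\cU_0)^{-1}$ by unitarity, these are the same computation.
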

\begin{proof}
The function $F(\lambda)$ has the Taylor expansion
\[
F(\lambda)=I_\sM+2\sum\limits_{n=1}^\infty
\lambda^n\int\limits_{\dT}\xi^{-n}\mu(d\xi)=I_\sM+2\sum\limits_{n=1}^\infty
\lambda^n S_n.
\]
Then
\[
F^*(\bar\lambda)=I_\sM+2\sum\limits_{n=1}^\infty \lambda^n S_{-n}.
\]
 Because $F(\lambda)+F^*(\lambda)\ge 0$ for $\lambda\in\dD$, the
$\bL(\sM)$-valued function $E(\lambda)$ belongs to the Schur class
${\bf S}(\sM,\sM)$. Construct the Hilbert spaces
$\sH_0=\sH_0(\{G_n\}_{n\ge 0}),$ $\cH_0=\sM\oplus\sH_0$ and let
$\cU_0=\cU_0(\{G_n\}_{n\ge 0})=\left(\cU_0(\{G_n\}_{n\ge
0})\right)^*$ be the block operator CMV matrix.
 Then $\cU_0$
is unitary operator in the Hilbert space $\cH_0$. The system
$\zeta_0=\left\{\cU_0;\sM,\sM,\sH_0 \right\}$ is a conservative and
simple, and its transfer function is equal to $E(\lambda)$
 (see Subsection \ref{RRR}, \eqref{CMVMODEL}, Theorem \ref{cmvmod}).
 Hence the transfer of the adjoint system $\zeta^*_0=\left\{\cU^*_0;\sM,\sM,\sH_0
 \right\}$ is equal to $\Theta(\lambda)=E^*(\bar\lambda)$.
 By definition of $F(\lambda)$ and $E(\lambda)$, and by Proposition \ref{CarSch}, and \eqref{FTh} we have
\[
\begin{array}{l}
F(\lambda)=(I_\sM+\lambda E(\lambda))(I_\sM-\lambda
E(\lambda))^{-1}=
(I_\sM+\lambda\Theta^*(\bar\lambda))(I_\sM-\lambda\Theta^*(\bar\lambda))^{-1}=\\
=P_\sM(\cU^*_0+\lambda I_{\cH_0})(\cU^*_0-\lambda
I_{\cH_0})^{-1}\uphar\sM.
\end{array}
\]
Hence
\[
\begin{array}{l}
F(\lambda)=I_\sM+2\sum\limits_{n=1}^\infty\lambda^n P_\sM
\cU_0^{n}\uphar\sM,\\
F^*(\bar\lambda)=I_\sM+2\sum\limits_{n=1}^\infty\lambda^n P_\sM
\cU_0^{-n}\uphar\sM.
\end{array}
\]
Thus, the pair $\{\cH_0,\cU_0\}$ is the minimal Naimark dilation of
the measure $\mu$. The same is true for the pair
$\{\wt\cH_0,\wt\cU_0\}.$
\end{proof}
\section{The block operator CMV matrix models for completely non-unitary contractions}
\label{MATRMOD}
\begin{theorem}
\label{MatrMod1} Let $T$ be a completely non-unitary contraction in
a separable Hilbert space $H$. Let
\[
\Phi_T(\lambda)=(-T+\lambda D_{T^*}(I_H-\lambda
T^*)^{-1}D_T)\uphar\sD_T
\]
be the Sz.-Nagy--Foias characteristic function of $T$ \cite{SF}. If
$ \{\Gamma_n\}_{n\ge 0}$ are the Schur parameters of
$\Phi_T(\lambda)$, then the operator $T$ is unitarily equivalent to
the truncated block operator CMV matrices
$\cT_0(\{\Gamma^*_n\}_{n\ge 0})$ and $\wt\cT_0(\{\Gamma^*_n\}_{n\ge
0})$.
\end{theorem}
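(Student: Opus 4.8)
The plan is to deduce the statement from the uniqueness of simple conservative realizations recorded in Section \ref{secS} (two simple conservative systems with the same input space, output space, and transfer function are unitarily equivalent), combined with the fact that $\Phi_T$ is itself the transfer function of an explicit conservative system. First I would note that since $T$ is completely non-unitary, so is $T^*$, and that $\Phi_T\in{\bf S}(\sD_T,\sD_{T^*})$ is purely contractive: indeed $\Phi_T(0)=-T\uphar\sD_T$ is a pure contraction because $T\uphar\sD_T$ is. Consequently its Schur parameters $\{\Gamma_n\}_{n\ge 0}$ form a choice sequence to which the CMV constructions of Sections \ref{BCMVR} and \ref{REST} apply, and $\Phi_T$ carries no unitary constant part.

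The core of the argument is to present two simple conservative realizations of the \emph{same} function $\Phi_T$ and then read off the unitary equivalence of their state-space operators from \eqref{UNSYS}. On one side, by Theorem \ref{cmvmod} the system $\zeta_0=\left\{\cU_0(\{\Gamma_n\}_{n\ge 0});\sD_T,\sD_{T^*},\sH_0(\{\Gamma_n\}_{n\ge 0})\right\}$ of \eqref{CMVMODEL} is a simple conservative realization of $\Phi_T$ whose state-space operator is exactly the truncated block operator CMV matrix $\cT_0(\{\Gamma_n\}_{n\ge 0})$. On the other side, the canonical system
\[
\Sigma=\left\{\begin{bmatrix}-T& D_{T^*}\cr D_{T}& T^*\end{bmatrix};\sD_T,\sD_{T^*},H\right\}
\]
from Section \ref{secS} has transfer function $\Phi_T$ and state-space operator $T^*$; it is simple because its state-space operator $T^*$ is completely non-unitary. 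Both systems share the input space $\sD_T$, the output space $\sD_{T^*}$, and the transfer function $\Phi_T$, so the uniqueness statement of Section \ref{secS} makes them unitarily equivalent, and \eqref{UNSYS} yields that $\cT_0(\{\Gamma_n\}_{n\ge 0})$ and $T^*$ are unitarily equivalent.

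It then remains to transport this conclusion to the matrices named in the theorem. Passing to adjoints shows that $\left(\cT_0(\{\Gamma_n\}_{n\ge 0})\right)^*$ is unitarily equivalent to $T$, and the identity \eqref{ADJ} gives $\left(\cT_0(\{\Gamma_n\}_{n\ge 0})\right)^*=\wt\cT_0(\{\Gamma^*_n\}_{n\ge 0})$; hence $\wt\cT_0(\{\Gamma^*_n\}_{n\ge 0})$ is unitarily equivalent to $T$. Finally, the unitary equivalence between $\cT_0(\{\Gamma^*_n\}_{n\ge 0})$ and $\wt\cT_0(\{\Gamma^*_n\}_{n\ge 0})$ established in Section \ref{BCMVR} (through the intertwining operator $\cV_0$) shows that $\cT_0(\{\Gamma^*_n\}_{n\ge 0})$ is unitarily equivalent to $T$ as well, which is the assertion.

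Rather than one deep obstacle, I expect the delicate points to be bookkeeping. One must check that $\Sigma$ and $\zeta_0$ literally share input and output spaces and transfer function, so that the uniqueness theorem applies without any intermediate unitary identifications; one must verify the simplicity of $\Sigma$ via the completely non-unitary character of $T^*$ (using the criterion of Section \ref{secS}); and one must apply \eqref{ADJ} with the correct conjugation $\Gamma_n\mapsto\Gamma^*_n$ of the Schur parameters, which is precisely what converts $T^*$-equivalence into $T$-equivalence. A minor case distinction arises when some $\Gamma_n$ is isometric, co-isometric, or unitary: there the construction of $\zeta_0$ must be taken from Section \ref{REST} instead of Section \ref{BCMVR}, but since Theorems \ref{CR} and \ref{cmvmod} are stated to remain valid in those cases, the argument proceeds unchanged.
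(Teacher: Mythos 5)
Your proof is correct and takes essentially the same route as the paper's: both arguments combine Theorem \ref{cmvmod} with the uniqueness, up to unitary equivalence, of simple conservative realizations, applied to a canonical Julia-type conservative system attached to $T$ (whose simplicity follows from $T$, hence $T^*$, being completely non-unitary). The only difference is bookkeeping: the paper realizes $\Theta_\eta(\lambda)=\Phi_T^*(\bar\lambda)$ by the system $\eta=\left\{\begin{bmatrix}-T^*&D_T\cr D_{T^*}&T\end{bmatrix};\sD_{T^*},\sD_T,H\right\}$, whose state operator is $T$ itself and whose Schur parameters are $\{\Gamma_n^*\}_{n\ge 0}$, so the equivalences $T\cong\cT_0(\{\Gamma^*_n\}_{n\ge 0})$ and $T\cong\wt\cT_0(\{\Gamma^*_n\}_{n\ge 0})$ come out directly, whereas you realize $\Phi_T$ by the dual system $\Sigma$ with state operator $T^*$ and then pass to adjoints via \eqref{ADJ} and the $\cV_0$-intertwining; the two arguments are mirror images of one another.
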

\begin{proof}
Consider the simple conservative system
\[
\eta=\left\{\begin{bmatrix} -T^*&D_T\cr D_{T^*}&T
\end{bmatrix};\sD_{T^*},\sD_T, H\right\}.
\]
The transfer function of $\eta$ is given by
\[
\Theta_\eta(\lambda)=\left(- T^*+\lambda D_{T}(I_H-\lambda
T)^{-1}D_{T^*}\right)\uphar\sD_{T^*},\;\lambda\in\dD.
\]
Since
\[
\Phi_T(\lambda)=\left(- T+\lambda D_{T^*}(I_H-\lambda
T^*)^{-1}D_{T}\right)\uphar\sD_{T},\;\lambda\in\dD,
\]
we get $\Phi_T(\lambda)=\Theta^*_\eta(\bar\lambda),$
$\lambda\in\dD$. Hence, if $\{\Gamma_n\}_{n\ge 0}$ are the Schur
parameters of $\Phi_T(\lambda)$, then $\{\Gamma^*_n\}_{n\ge 0}$ are
the Schur parameters of $\Theta_\eta(\lambda)$. Construct the
Hilbert spaces $\sH_0=\sH_0(\{\Gamma^*_n\}_{n\ge 0})$,
$\wt\sH_0=\wt\sH_0(\{\Gamma^*_n\}_{n\ge 0})$, the block operator CMV
matrices $\cU_0=\cU_0(\{\Gamma^*_n\}_{n\ge 0})$,
$\wt\cU_0=\wt\cU_0(\{\Gamma^*_n\}_{n\ge 0})$, truncated block CMV
matrices $\cT_0=\cT_0(\{\Gamma^*_n\}_{n\ge 0})$ and
$\wt\cT_0=\wt\cT_0(\{\Gamma^*_n\}_{n\ge 0})$. Consider the
corresponding conservative systems
\[
\zeta_0=\left\{\cU_0;\sD_{T^*},\sD_T,\sH_0\right\},\;\wt\zeta_0=\left\{\wt\cU_0;\sD_{T^*},\sD_T,\wt\sH_0\right\}.
\]
By Theorem \ref{cmvmod} the systems $\zeta_0$ and $\wt\zeta_0$ are
simple conservative realizations of the function $\Theta$. It
follows that the operator $T$ is unitarily equivalent to the
operators $\cT_0(\{\Gamma^*_n\}_{n\ge 0})$ and
$\wt\cT_0(\{\Gamma^*_n\}_{n\ge 0})$.
\end{proof}
Observe that $\cT_0(\{\Gamma^*_n\}_{n\ge
0})=\left(\wt\cT_0(\{\Gamma_n\}_{n\ge 0}) \right)^*$ and
$\wt\cT_0(\{\Gamma^*_n\}_{n\ge 0})=\left(\cT_0(\{\Gamma_n\}_{n\ge
0}) \right)^*$.

The results of Sz.-Nagy--Foias  \cite[Theorem VI.3.1]{SF} states
that if  the function $\Theta\in{\bf S}(\sM,\sN)$ is purely
contractive ($||\Theta(0)f||<||f||$ for all $f\in\sM\setminus\{0\}$)
then there exists a completely non-unitary contraction $T$ whose
characteristic function coincides with $\Theta$. Here we give
another proof of this result.
\begin{theorem}
\label{MatrMod2} Let the function $\Theta(\lambda)\in {\bf
S}(\sM,\sN)$ be purely contractive.
%\begin{enumerate}
%\item
If $\{\Gamma_n\}_{n\ge 0}$ are the Schur parameters of
$\Theta(\lambda)$ then the characteristic functions of completely
non-unitary contractions given by truncated block operator CMV
matrices $\cT_0(\{\Gamma^*_n\}_{n\ge 0})$ and
$\wt\cT_0(\{\Gamma^*_n\}_{n\ge 0})$ coincide with $\Theta$.
\end{theorem}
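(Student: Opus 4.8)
The plan is to identify the truncated matrices $\cT_0(\{\Gamma^*_n\}_{n\ge 0})$ and $\wt\cT_0(\{\Gamma^*_n\}_{n\ge 0})$ as state space operators of the CMV systems already constructed in Section~\ref{BCMVR} and then to read off their characteristic functions from the realization theorem. Set $T:=\cT_0(\{\Gamma^*_n\}_{n\ge 0})$. By Sections~\ref{BCMVR} and \ref{REST} the operator $T$ is a completely non-unitary contraction, and by the relations noted just after Theorem~\ref{MatrMod1} (equivalently by \eqref{ADJ}) its adjoint is $T^*=\wt\cT_0(\{\Gamma_n\}_{n\ge 0})$. Now $\wt\cT_0(\{\Gamma_n\}_{n\ge 0})$ is precisely the state space operator of the simple conservative system $\wt\zeta_0=\left\{\wt\cU_0(\{\Gamma_n\}_{n\ge 0});\sM,\sN,\wt\sH_0(\{\Gamma_n\}_{n\ge 0})\right\}$, whose transfer function, by Theorem~\ref{cmvmod}, is exactly $\Theta$.

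Next I would exhibit a second simple conservative system with the same state space operator $T^*$, but whose transfer function is the characteristic function $\Phi_T$ of $T$. This is the system $\Sigma_*$ of Section~\ref{secS} taken with $A=T^*$, namely $\left\{\begin{bmatrix}-T&D_{T^*}\cr D_T&T^*\end{bmatrix};\sD_T,\sD_{T^*},H\right\}$: its state space operator is $T^*$, it is simple because $T^*$ is completely non-unitary, and by the computation of the transfer functions of $\Sigma$ and $\Sigma_*$ in Section~\ref{secS} its transfer function is the Sz.-Nagy--Foias characteristic function of $(T^*)^*=T$, i.e.\ $\Phi_T$. Thus $\wt\zeta_0$ and this system are two simple conservative systems sharing the completely non-unitary state space operator $T^*$, the former with transfer function $\Theta$ and the latter with transfer function $\Phi_T$. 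The desired conclusion that $\Theta$ coincides with $\Phi_T$ will then follow from the principle that a simple conservative system determines its transfer function up to coincidence through its state space operator alone; equivalently, the transfer function of a simple conservative system with state space operator $A$ coincides, in the sense of \eqref{COINC}, with the characteristic function of $A^*$.

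Before invoking that principle I would verify that here the coincidence is genuinely between $\Theta$ and $\Phi_T$ with the correct input and output spaces, which is where the hypothesis that $\Theta$ is purely contractive is used. Since $\Gamma_0=\Theta(0)$ is a pure contraction, a short argument (if $g\in\ker D_{\Gamma^*_0}$ then $f:=\Gamma^*_0 g$ satisfies $\Gamma_0 f=g$ and $\|f\|=\|g\|=\|\Gamma_0 f\|$, forcing $f=0$ and hence $g=0$) shows $\ker D_{\Gamma^*_0}=\{0\}$ as well; combined with the explicit description \eqref{kernels1} of $\sD_{\wt\cT_0}$ and $\sD_{\wt\cT^*_0}$, applied to $T^*=\wt\cT_0(\{\Gamma_n\}_{n\ge 0})$, this gives $\sD_{T^*}\cong\sN$ and $\sD_T\cong\sM$. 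Hence the unitary identifications in \eqref{COINC} match $\Theta\in{\bf S}(\sM,\sN)$ with $\Phi_T\in{\bf S}(\sD_T,\sD_{T^*})$ and no nontrivial unitary part is discarded, which is what makes this a genuine reproof of \cite[Theorem VI.3.1]{SF}. The assertion for $\wt\cT_0(\{\Gamma^*_n\}_{n\ge 0})$ is obtained in exactly the same way, using instead the system $\zeta_0=\left\{\cU_0(\{\Gamma_n\}_{n\ge 0});\sM,\sN,\sH_0(\{\Gamma_n\}_{n\ge 0})\right\}$, whose state space operator is $\cT_0(\{\Gamma_n\}_{n\ge 0})=\left(\wt\cT_0(\{\Gamma^*_n\}_{n\ge 0})\right)^*$ and whose transfer function is again $\Theta$.

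The main obstacle is the coincidence principle invoked in the second paragraph, since it is not among the statements proved earlier in the paper: one must know that the transfer function of a simple conservative system is, up to coincidence, the characteristic function of the adjoint of its state space operator. This is classical, being the content of Brodski\u{\i}'s model of a simple unitary colligation \cite{Br1} and of \cite{SF}; if a self-contained argument is preferred it can be derived from the unitarity relations $C^*C=D_A^2$, $BB^*=D_{A^*}^2$, $B^*B=I-D^*D$, $CC^*=I-DD^*$ together with the coupling relations $D^*C+B^*A=0$ and $DB^*+CA^*=0$, which express $B$, $C$, $D$ through $A$ and unitary maps of the input and output spaces, and thereby identify $\Theta_\tau$ with the Sz.-Nagy--Foias characteristic function of $A^*$.
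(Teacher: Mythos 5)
Your first paragraph and the use of $\Sigma_*$ are fine: $T^*=\wt\cT_0(\{\Gamma_n\}_{n\ge 0})$ by \eqref{ADJ}, it is the state operator of the simple conservative system $\wt\zeta_0$ with transfer function $\Theta$ (Theorem~\ref{cmvmod}), and $\Sigma_*$ with $A=T^*$ is a second simple conservative system with the same state operator and transfer function $\Phi_T$. The genuine gap is the coincidence principle on which your second paragraph rests: as stated it is false. A simple conservative system does \emph{not} determine its transfer function up to coincidence through its state operator alone; the state operator determines only the \emph{pure part} of the transfer function. Concretely, take $\sM=\sN=\dC^2$, $\sH=\dC$, $A=0$, $B=\begin{bmatrix}1&0\end{bmatrix}$, $C=\begin{bmatrix}1\cr 0\end{bmatrix}$, $D=\begin{bmatrix}0&0\cr 0&1\end{bmatrix}$: then $U_\tau$ is a permutation matrix, hence unitary, and $A=0$ is completely non-unitary, so the system is simple and conservative; yet its transfer function $\begin{bmatrix}\lambda&0\cr 0&1\end{bmatrix}$ does not coincide with the characteristic function $\lambda\in{\bf S}(\dC,\dC)$ of $A^*$, since no unitary operator maps $\dC^2$ onto $\dC$. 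So exhibiting two simple conservative systems sharing the state operator $T^*$ proves nothing by itself, and your paragraph-3 repair does not close the hole either: knowing $\sD_T\cong\sM$ and $\sD_{T^*}\cong\sN$ as abstract Hilbert spaces is insufficient, because in infinite dimensions an isometry between isomorphic spaces need not be surjective, so matching "dimensions" cannot rule out that a unitary constant summand is being discarded.

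What closes the gap is the quantitative form of the principle, which is exactly what the paper invokes (from \cite{AHS1}, \cite{Arlarxiv}, together with the parametrization of unitary block operator matrices): for any conservative system with state operator $A$ one has $C=XD_A$ and $B=D_{A^*}W^*$ with \emph{isometries} $X:\sD_A\to\sN$, $W:\sD_{A^*}\to\sM$ --- not unitaries, as your paragraph 4 asserts --- and
\begin{equation*}
\Theta_\tau(\lambda)=X\,\Phi_{A^*}(\lambda)\,W^*+D(I_\sM-WW^*),
\end{equation*}
where the last term is a unitary constant acting from $\ker W^*$ onto $\ker X^*$. The kernel identities $\ker W^*=\ker B=\ker D_{D}$ (from $B^*B=I-D^*D$) and $\ker X^*=\ker C^*=\ker D_{D^*}$ (from $CC^*=I-DD^*$) show that $X$ and $W$ are unitary precisely when $\ker D_{\Theta_\tau(0)}=\ker D_{\Theta_\tau(0)^*}=\{0\}$; the first vanishing is your purely contractive hypothesis, and the second is supplied by your own (correct) short argument that $\Gamma_0$ pure implies $\ker D_{\Gamma_0^*}=\{0\}$. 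If you replace the false principle and the dimension count by this identity, your proof becomes correct --- and it is then essentially the paper's proof, which applies this decomposition to $\wt\Theta(\lambda)=\Theta^*(\bar\lambda)$ realized by the CMV system built from $\{\Gamma^*_n\}_{n\ge 0}$, writing $\cG_0=\cK D_{\cT_0}$, $\cF_0=D_{\cT^*_0}\cM$, $\wt\Theta(\lambda)=\cK\Phi_{\cT^*_0}(\lambda)\cM+\cX D_{\cM}$, and using purity of $\Gamma^*_0$ to force $\ker\cM=\ker\cK^*=\{0\}$.
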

\begin{proof}
Let $\wt\Theta(\lambda):=\Theta^*(\bar\lambda).$ Then
 $\{\Gamma^*_n\}_{n\ge 0}$ are the Schur parameters of $\wt\Theta$.
Construct the Hilbert spaces $\sH_0=\sH_0(\{\Gamma^*_n\}_{n\ge 0})$,
$\wt\sH_0=\wt\sH_0(\{\Gamma^*_n\}_{n\ge 0})$, the block operator CMV
matrices $\cU_0=\cU_0(\{\Gamma^*_n\}_{n\ge 0})$,
$\wt\cU_0=\wt\cU_0(\{\Gamma^*_n\}_{n\ge 0})$, truncated block CMV
matrices $\cT_0=\cT_0(\{\Gamma^*_n\}_{n\ge 0})$,
$\wt\cT_0=\wt\cT_0(\{\Gamma^*_n\}_{n\ge 0})$, and consider the
corresponding conservative systems
\[
\zeta_0=\left\{\cU_0;\sM,\sN,\sH_0\right\},\;\wt\zeta_0=\left\{\wt\cU_0;\sM,\sN,\wt\sH_0\right\}.
\]
Then the transfer functions of $\zeta_0$ and $\wt\zeta_0$ are equal
to $\wt\Theta(\lambda).$ Since the operator
\[
\cU_0=\begin{bmatrix}\Gamma^*_0&\cG_0\cr \cF_0&\cT_0
\end{bmatrix}:\begin{array}{l}\sM\\\oplus\\\sH_0\end{array}\to
\begin{array}{l}\sN\\\oplus\\\sH_0\end{array}
\]
is a contraction, there exist contractions (see \cite{AG},
\cite{DaKaWe}, \cite{ShYa}) $\cK\in\bL(\sD_{\cT_0},\sN)$,
$\cM\in\bL(\sM,\sH_0)$, $\cX\in\bL(\sD_\cM,\sD_{\cK^*})$ such that
\[
\cG_0=\cK
D_{\cT_0},\;\cF_0=\cD_{\cT^*_0}\cM,\;\Gamma^*_0=-\cK\cT^*_0\cM+D_{\cK^*}\cX
D_{\cM}.
\]
Because $\cU_0$ is unitary, the operators $\cK$, $\cM^*$ are
isometries and $\cX$ is unitary (see \cite{ARL1},\cite{Arlarxiv}).
The characteristic function of $\cT^*_0$ and the transfer function
of the system $\zeta_0$ are connected by the relation (see
\cite{AHS1}, \cite{Arlarxiv})
\[
\wt\Theta(\lambda)=\cK\Phi_{\cT^*_0}(\lambda)\cM+\cX D_{\cM}.
\]
Because the operator $D_{\cM}$ is the orthogonal projection in $\sM$
onto $\ker\cM$, and
\[
\wt\Theta(\lambda)\uphar\ker \cM=\cX,
\]
we have for $f\in\ker\cM$
\[
||\Gamma^*_0f||=||\wt\Theta(0)f||=||\cX f||=||f||.
\]
Since $\Gamma^*_0$ is a pure contraction, we obtain $\ker\cM=\{0\}$.
Similarly $\ker \cK^*=\{0\}$, i.e.,  $\cK$ and $\cM$ are unitary
operators, and $\wt\Theta(\lambda)=\cK\Phi_{\cT^*_0}(\lambda)\cM,$
$\lambda\in\dD.$ Thus the characteristic function $\Phi_{\cT_0}$ of
$\cT_0$ coincides with $\Theta.$ Similarly, the characteristic
function $\Phi_{\wt\cT_0}$ of $\wt\cT_0$ coincides with $\Theta.$
\end{proof}
\begin{remark} For completely non-unitary contractions with one-dimensional defect operators and for
a scalar Schur class functions Theorem \ref{MatrMod1} and Theorem
 \ref{MatrMod2} have been established in \cite{AGT}.

\end{remark}

\end{document}